%--------------GodaHayashi2.tex--------rootfile-------------'10/9/9-----
\documentclass[12pt]{amsart}
\usepackage{graphicx}
\ExecuteOptions{usename}
\hoffset=-2cm
\setlength{\textwidth}{16.3cm}
\setlength{\textheight}{22.5cm}

\newtheorem{theorem}{Theorem}[section]
\newtheorem{proposition}[theorem]{Proposition}
\newtheorem{lemma}[theorem]{Lemma}

%%% added command %%%%

%%%%%%%%%%%%%%%%%%%%%%

%%% added one more %%%

%%%%%%%%%%%%%%%%%%%%%%

\theoremstyle{definition}
\newtheorem{definition}[theorem]{Definition}

\newcommand{\bdd}{\mbox{$\partial$}}

\theoremstyle{remark}
\newtheorem{remark}[theorem]{Remark}

\numberwithin{equation}{section}

\begin{document}

%\title[unknotting tunnel] {
\title[$1$-genus $1$-bridge knots II]
{Genus two Heegaard splittings
of exteriors of \\$1$-genus $1$-bridge knots II}

\author{Hiroshi Goda and Chuichiro Hayashi}

\date{\today}

%\thanks{Second author partially supported by an NSF grant and the
%Miller Foundation.  Third author partially supported by an NSF grant}
\thanks{The first and second authors are partially supported
by Grant-in-Aid for Scientific Research, (No. 21540071 and 18540100),
Ministry of Education, Science, Sports and Culture.}

\begin{abstract}
A knot $K$ is called a $1$-genus $1$-bridge knot  in a $3$-manifold $M$ 
if $(M,K)$ has a Heegaard splitting $(V_1,t_1)\cup (V_2,t_2)$  
where $V_i$ is a solid torus and $t_i$ is a  boundary parallel arc 
properly embedded in $V_i$. 
If the exterior of a knot has a genus 2 Heegaard splitting, 
we say that the knot has an unknotting tunnel. 
Naturally the exterior of a $1$-genus $1$-bridge knot $K$ allows 
a genus 2 Heegaard splitting, i.e., 
$K$ has an unknotting tunnel. 
But, in general, there are unknotting tunnels which are not 
derived form this procedure. 
Some of them may be levelled with the torus $\partial V_1=\partial V_2$, 
whose case was studied in our previous paper \cite{GH2}.  
In this paper, we consider the remaining case.
\end{abstract}

\maketitle

\section{Introduction}
This paper is a sequel to our previous paper \cite{GH2}.
We will use the same notations and terminology. 
Let $K$ be a $1$-genus $1$-bridge knot in the $3$-sphere $S^3$, and 
suppose that there are  a $(1,1)$-splitting
$(S^3, K) = (V_1, t_1) \cup_{H_1} (V_2, t_2)$
and a $(2,0)$-splitting
$(S^3, K) = (W_1, K) \cup_{H_2} (W_2, \emptyset)$.
We denote by $\ell$ the number of $K$-essential loops of 
the intersection $H_1$ and $H_2$. 
The number $\ell$ is said to be {\it minimum\/}
if there is no isotopies of $H_1$ and $H_2$ in $(M,K)$ 
so that they intersect each other in non-empty collection 
of smaller number of loops which are $K$-essential 
both in $H_1$ and in $H_2$. 
In \cite{GH2}, 
we proved the following theorem.

\begin{theorem}[\cite{GH2}]\label{thm:main}
Suppose $\ell$ is minimum and $\ell\neq 2$. 
Then, at least one of the following conditions holds.
\begin{enumerate}
\renewcommand{\labelenumi}{(\theenumi)}
\item
%(1)
The $(2,0)$-splitting $H_2$ is meridionally stabilized.
\item
%(2)
There is an arc $\gamma$
which forms a spine of $(W_1, K)$
%and is contained in the torus $H_1$.
and is isotopic into the torus $H_1$.
Moreover, we can take $\gamma$
so that
there is a canceling disk $C_i$ of the arc $t_i$ in $(V_i, t_i)$
with $\bdd C_i \cap \gamma = \partial \gamma = \partial t_i$
for $i=1$ or $2$.
\item
%(4)
The $(1,1)$-splitting $H_1$ admits a satellite diagram
of a longitudinal slope.
\end{enumerate}
\end{theorem}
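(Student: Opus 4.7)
The plan is to put $H_1$ and $H_2$ in general position and analyze $H_1 \cap H_2$, a disjoint union of loops on both surfaces. First I would discard all loops that are inessential on at least one side by standard innermost disk arguments. An innermost inessential loop $c$ bounds a disk $D$ on (say) $H_2$ with interior disjoint from $H_1 \cap H_2$; the disk $D$ meets $K$ in at most one point. If $D \cap K = \emptyset$, the loop $c$ can be removed by an isotopy across $D$, so by minimality of $\ell$ we may assume no such $c$ exists. If $D$ meets $K$ in one point, then by comparing $D$ against the bridge-arc structure on the adjacent $V_i$ side, I expect either to cancel $c$ via an isotopy of $K$ in $(S^3,K)$, or to exhibit a meridional stabilizing pair for $H_2$, yielding conclusion~(1).

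After this reduction, every loop of $H_1 \cap H_2$ is essential on both surfaces and $K$-essential on $H_1$; hence the loops are mutually parallel on the punctured torus $H_1 \setminus K$ and carry a common slope $\sigma$ on $H_1$. I would now perform a case analysis on $\ell$, using the hypothesis $\ell \neq 2$.

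For $\ell = 0$, $H_1$ and $H_2$ are disjoint, so $H_2$ lies in one of the $V_i$. Combined with the $(2,0)$-splitting structure and a choice of spine of $W_1$, I would show that a spine arc $\gamma$ of $(W_1, K)$ can be isotoped into $H_1$, with the required canceling disk $C_i$ arising from the bridge position of $t_i$; this gives conclusion~(2). For $\ell = 1$, the single essential loop cuts $H_1$ into a single annulus containing $K$, and cutting $W_1 \cup W_2$ along the two resulting surfaces and inspecting the pieces (again using outermost-arc arguments against canceling disks of $t_1$, $t_2$) should produce conclusion~(2) or~(3). For $\ell \geq 3$, the loops decompose $H_1$ into $\ell$ annuli and $H_2$ into $\ell$ planar pieces; outermost-arc arguments applied to compressing and $\partial$-compressing disks of $H_2$ in $W_1 \cup W_2$, together with the numerical lower bound $\ell \geq 3$, should force the slope $\sigma$ to be longitudinal on one of $V_1, V_2$, yielding a satellite diagram for $H_1$ and conclusion~(3).

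The main obstacle I foresee is the case $\ell \geq 3$, where the combinatorial interplay between the intersection loops on $H_2$ and the compressing/canceling disks in the handlebodies must be controlled delicately, and excluding every configuration short of a longitudinal satellite diagram is subtle. Distinguishing genuine intersections from those removable by an isotopy within $(S^3, K)$---invoking the minimality hypothesis at precisely the right places---is the main technical step. This is also exactly why the boundary case $\ell = 2$, which admits intermediate configurations not covered by either extreme, requires separate treatment and is the subject of the present sequel.
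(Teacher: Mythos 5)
First, a point of orientation: this paper does not prove Theorem \ref{thm:main} at all --- it is quoted from the earlier paper \cite{GH2}, and the present paper's sole contribution is the complementary case $\ell=2$ (Theorem \ref{thm:l=2}), which is excluded by hypothesis from the statement you are proving. So there is no proof in this source to compare against line by line; I can only judge your sketch on its own terms and against the machinery the paper visibly relies on.

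Judged that way, there is a genuine gap at the pivot of your argument. After discarding removable loops you assert that every remaining loop of $H_1\cap H_2$ is \emph{essential} in $H_1$, hence that the loops are mutually parallel in $H_1\setminus K$ and carry a common slope $\sigma$. That is false: the surviving loops are only $K$-essential, and on the twice-punctured torus $H_1$ a $K$-essential loop may bound a disk in $H_1$ containing both points of $K\cap H_1$, i.e.\ be inessential in $H_1$. The present paper's own setup for $\ell=2$ exhibits exactly this: in pattern (1) the loops $l_1,l_2$ cut $H_1$ into a disk $Q$ (containing $K\cap H_1$), an annulus, and a once-punctured torus, so the loops are not parallel essential curves and there is no well-defined slope $\sigma$. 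Your entire $\ell\ge 3$ analysis, which hinges on forcing $\sigma$ to be longitudinal, therefore does not get off the ground; the case division must instead track which complementary pieces of $H_1$ and $H_2$ contain the punctures and whether each piece is $K$-compressible or $K$-$\partial$-compressible in the adjacent handlebody or solid torus (the role played here by Lemma 3.1 and Lemma 2.4 of \cite{GH2}, Kobayashi's Proposition \ref{thm:koba}, and the semi-/meridionally stabilized dichotomies). Relatedly, your $\ell\ge 3$ and $\ell=1$ paragraphs are statements of hope (``should produce,'' ``should force'') rather than arguments, and the delicate step you yourself flag --- excluding every configuration short of a longitudinal satellite diagram --- is precisely where the content of the theorem lives; as written the proposal does not supply it.
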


\begin{figure}[htbp]
\begin{center}
\includegraphics[width=.7\textwidth]{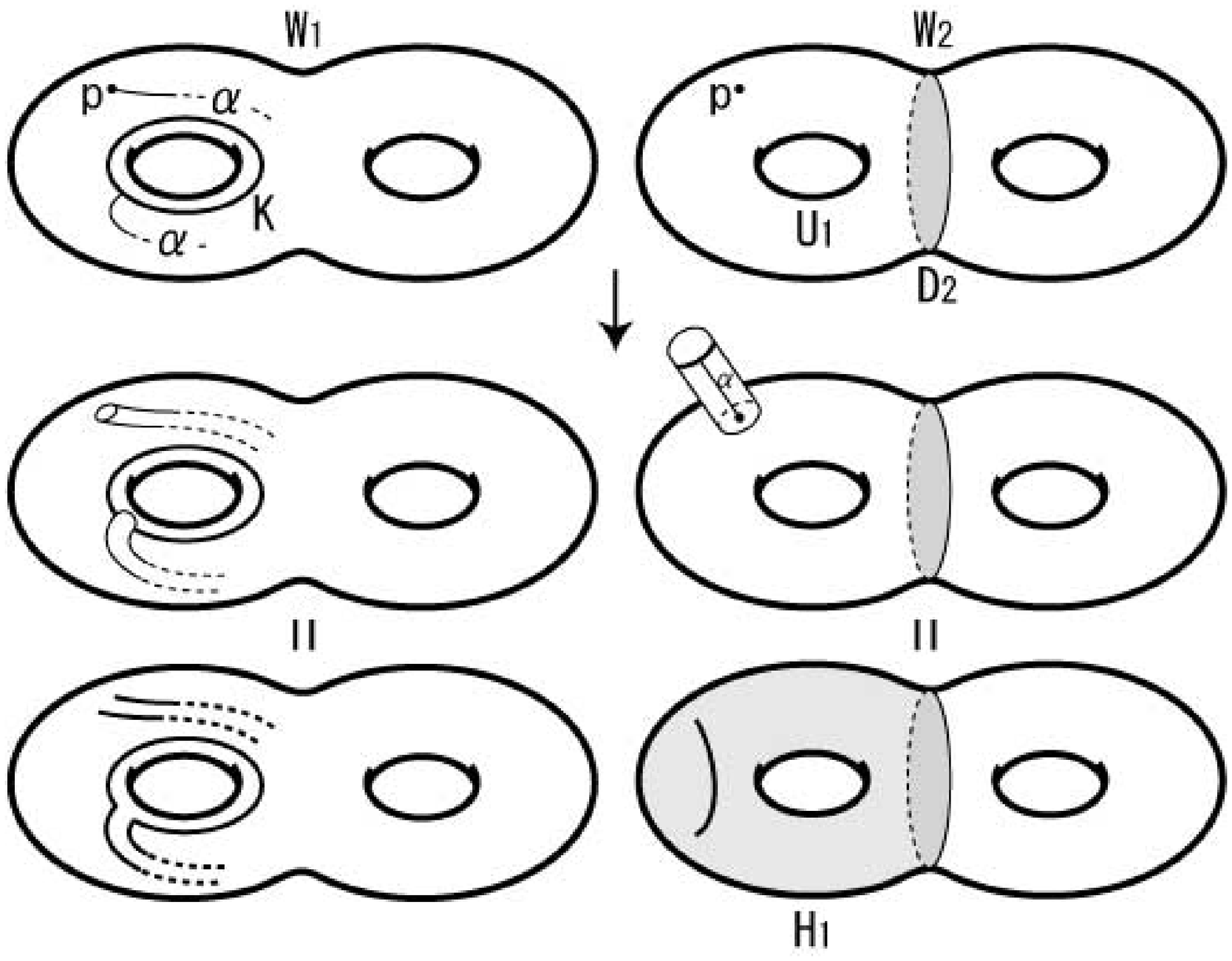}
\end{center}
\caption{}
\label{fig:Case3}
\end{figure}

In this paper, we consider the case of $\ell=2$
in a slightly general setting.
Concretely, we will prove the following theorem. 

%See Figure \ref{fig:Case3}.

\begin{theorem}\label{thm:l=2}
Let $M$ be the $3$-sphere or a lens space $($other than $S^2 \times S^1)$,
and $K$ a knot in $M$.
Let $(V_1, t_1) \cup_{H_1} (V_2, t_2)$ and  $(W_1, K) \cup_{H_2} (W_2, \emptyset)$
be a $(1,1)$-splitting and $(2,0)$-splitting of $(M,K)$.
Suppose that the splitting surfaces $H_1$ and $H_2$
intersect each other in $2$ loops
which are $K$-essential both in $H_1$ and in $H_2$.
Further, we assume that $M$ has a $2$-fold branched cover with branch set $K$.
Then at least one of 
the six conditions $(a) \sim (f)$ below holds.
%the seven conditions $(a) \sim (g)$ below holds.
\begin{enumerate}
\item[(a)]
 We can isotope $H_1$ and $H_2$ in $(M, K)$
so that they intersect in one loop
which is $K$-essential both in $H_1$ and in $H_2$.
%\item[(b)]
% The $(1,1)$-splitting $H_1$ is weakly $K$-reducible.
\item[(b)]
 The $(2,0)$-splitting $H_2$ is weakly $K$-reducible.
\item[(c)]
 The knot $K$ is a torus knot.
\item[(d)]
% The knot $K$ is a non-composite satellite knot.
 The knot $K$ is a satellite knot.
\item[(e)]
 The $(1,1)$-splitting $H_1$ admits a satellite diagram of a longitudinal slope.
\item[(f)]
 There is an essential separating disk $D_2$ in $W_2$,
and an arc $\alpha$ in $W_1$
such that $\alpha \cap K$ is one of the endpoints $\bdd \alpha$,
and 
%$\alpha \cap W_1$
$\alpha \cap \bdd W_1$ is the other endpoint, say $p$, of $\alpha$
and that $D_2$ cuts off a solid tours $U_1$ from $W_2$
with $p \in \bdd U_1$
and with the torus $\partial N(U_1 \cup \alpha)$
isotopic to the $(1,1)$-splitting torus $H_1$ in $(M, K)$.
See Figure \ref{fig:Case3}.
\end{enumerate}
\end{theorem}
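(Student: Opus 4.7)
The plan is to analyze the intersection pattern of $H_1$ and $H_2$ along the two $K$-essential loops $c_1, c_2$, combining direct disk and annulus arguments in the four submanifolds $(V_1,t_1),(V_2,t_2),(W_1,K),(W_2,\emptyset)$ with the extra rigidity provided by passing to the 2-fold branched cover $\widetilde{M} \to M$ with branch set $K$.

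First I would describe the components of $H_1 \setminus (c_1 \cup c_2)$ and of $H_2 \setminus (c_1 \cup c_2)$. Because $c_1, c_2$ are $K$-essential and disjoint in the twice-punctured torus $H_1 \setminus K$, they are parallel on $H_1$, and so cut $H_1 \setminus K$ either into two once-punctured annuli or into a twice-punctured annulus together with an unpunctured annulus. A parallel enumeration on the closed genus-two surface $H_2$ gives a short list of configurations of pairs of disjoint essential loops (parallel non-separating, cut-system, one-separating-one-not, and so on). I would pair each $H_1$-type with each $H_2$-type and treat the combinations in turn.

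Second, for each combination I would ask which of the loops bound disks in one of $V_1, V_2, W_1, W_2$, or cobound essential annuli with arcs of $K$. An innermost or outermost disk argument in a piece carrying a compression or boundary-compression either reduces the intersection number $\ell$, delivering conclusion (a), or produces disjoint compressing disks for $H_2$ on its two sides, delivering conclusion (b). If neither occurs, the loops are essential in every relevant submanifold, and in particular they cobound essential annuli in specific pieces of the decomposition.

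Third, to handle the genuinely essential configurations I would lift to $\widetilde{M}$. Each $(V_i, t_i)$ lifts to a genus-two handlebody $\widetilde{V}_i$, so $\widetilde{H}_1$ is a genus-two Heegaard surface of $\widetilde{M}$; the branched cover of $(W_1, K)$ together with the trivial double cover of $W_2$ assembles into a decomposition along $\widetilde{H}_2$, with $c_1, c_2$ lifting to pairs of essential annuli. A cabling annulus descending from the cover produces a torus knot, giving conclusion (c); an essential torus detected in $(M,K)$ yields a satellite structure, giving conclusion (d); and a standard isotopy of the two genus-two Heegaard splittings in $\widetilde{M}$, pushed back downstairs, yields either a longitudinal satellite diagram on $H_1$ (conclusion (e)) or the explicit arc-and-separating-disk configuration of Figure \ref{fig:Case3}, which is exactly conclusion (f).

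The main obstacle will be the bookkeeping of the combinatorial cases, together with the correct transfer of structures through the branched cover: one must verify that each substructure identified upstairs (essential torus, cabling annulus, longitudinal satellite) descends to one of the listed conclusions, and that the residual case which resists all these reductions is \emph{exactly} the configuration of (f). Isolating (f) as the unique residue, and ruling out that it could itself be simplified further under the minimality assumption, is the subtlest step.
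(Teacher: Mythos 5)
Your first two steps track the paper's actual strategy reasonably well: the paper also begins by using the fact that $H_1$ and $H_2$ each separate $M$ to cut the possibilities down to four cases ((1) or (2) on $H_1$ crossed with (A) or (B) on $H_2$), and then runs compressibility/$\bdd$-compressibility dichotomies in the four pieces to extract either a reduction of $\ell$ (conclusion (a)) or disjoint compressing disks on the two sides of $H_2$ (conclusion (b)). (Your enumeration of loop configurations on $H_2$ is longer than necessary --- a cut system or a separating-plus-nonseparating pair cannot arise because $l_1\cup l_2$ must separate $H_2$ --- but that is harmless.)

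The third step is where the proposal has a genuine gap. The paper never lifts to the double branched cover: that hypothesis is used \emph{only} to invoke Proposition 3.4 of \cite{K2} (Proposition \ref{thm:koba}) as a black box, in situations where $H_1$ has been isotoped into $\inter W_1$ disjoint from a compressing disk of $H_2$. Your proposed lift does not give what you claim: the double cover of the genus-two handlebody $W_2$ and the branched double cover of $(W_1,K)$ are both genus-\emph{three} handlebodies, so $\widetilde{H}_2$ is a genus-three Heegaard surface, and there is no ``standard isotopy of two genus-two Heegaard splittings'' to appeal to --- nor, even in the genus-two versus genus-two setting, is there a classification result that would output conclusions (c)--(f). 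Descent of structures (essential tori, cabling annuli) from the cover is also delicate and unaddressed. The paper instead obtains (c) and (d) downstairs, via the notion of a \emph{semi-stabilized} splitting and Proposition 7.2 of \cite{GHY} (Proposition \ref{prop:semistabilized}); obtains (e) by exhibiting a satellite diagram directly on $H_1$ and citing Proposition 4.9 of \cite{GH2}; and obtains (f) not as a mysterious residue but constructively, in one specific sub-case ((iv)(D) of the $4\times 7$ case analysis in Case (2)(B)), by performing two explicit $\bdd$-compressions that exhibit $H_1$ as $\bdd N(U_1\cup\alpha)$. Without a concrete mechanism replacing these three ingredients, the plan cannot be completed as written.
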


This theorem provides the case (3-1) of Theorem 1.3 in \cite{GH2} 
so that the proof completes. 

The authors thank 
the anonymous referee for his-or-her 
helpful comments.

\section{Preliminaries}

In this paper, we will use the same notations and terminology 
as in the previous paper \cite{GH2}.
In the proof of Theorem \ref{thm:l=2}, 
we use the next proposition.
The condition
that $M$ has a $2$-fold branched cover with branch set $K$ 
is necessary 
only when we apply it.

\begin{proposition}[Proposition 3.4 in \cite{K2}]\label{thm:koba}
 Let $M$ be a closed orientable 3-manifold, and $L$ a link in $M$.
 Assume that $M$ has a 2-fold branched cover
with branch set $L$.
 Let $H_i$ be 
%$g_i$-genus $n_i$-bridge splitting of $(M,L)$
$(g_i, n_i)$-splitting of $(M,L)$
for $i=1$ and $i=2$,
and $W$ a genus $g_2$ handlebody bounded by $H_2$ in $M$.
 Suppose that $H_1$ is contained in the interior of $W$,
and that there is
an $L$-compressing or meridionally compressing disk $D$ of $H_2$
in $(W, L \cap W)$ with $D \cap H_1 = \emptyset$.
 Then either
$(i)$ $M = S^3$ and $L = \emptyset$ or $L$ is the trivial knot,
or $(ii)$ the splitting $H_2$ is weakly $L$-reducible.
\end{proposition}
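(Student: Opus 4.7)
The plan is to argue by contradiction: assume conclusion (ii) fails, so $H_2$ is not weakly $L$-reducible, and aim to derive (i) by lifting the configuration to the $2$-fold branched cover $p:\tilde M\to M$ with branch set $L$ and applying standard Heegaard-splitting theory upstairs.

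First, lift everything. Let $\tau$ be the covering involution. Because $H_i$ is a $(g_i,n_i)$-splitting, $\tilde H_i := p^{-1}(H_i)$ is a genuine Heegaard surface of $\tilde M$ of genus $2g_i+n_i-1$, and $\tilde W := p^{-1}(W)$ is a handlebody bounded by $\tilde H_2$ containing $\tilde H_1$ in its interior. The disk $D$ lifts to a $\tau$-invariant family $\tilde D$ of compressing disks of $\tilde H_2$ in $\tilde W$ disjoint from $\tilde H_1$: two parallel copies exchanged by $\tau$ when $D$ is $L$-compressing, and a single $\tau$-invariant disk with one branch point at $D\cap L$ when $D$ is meridionally compressing. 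By equivariant Dehn's lemma, $\tau$-equivariant compressing disks of $\tilde H_2$ on a given side correspond bijectively (up to isotopy) to $L$-compressing or meridionally compressing disks of $H_2$ on that side of $H_2$, so the failure of weak $L$-reducibility of $H_2$ forces $\tilde H_2$ to be \emph{strongly irreducible} in $\tilde M$.

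Next I produce a compressing disk of $\tilde H_2$ on the $\tilde W'$ side with boundary disjoint from $\partial\tilde D$. Since $\tilde H_1\subset\mathrm{int}(\tilde W)$, the handlebody $\tilde V_2$ of the $\tilde H_1$-splitting on the side of $\tilde W' := \overline{\tilde M\setminus\tilde W}$ contains $\tilde H_2$ in its interior. Pick a $\tau$-equivariant complete meridian-disk system $\mathcal E$ of $\tilde V_2$ minimising $|\mathcal E\cap\tilde H_2|$. A standard innermost-disk / outermost-arc analysis of $\mathcal E\cap\tilde H_2$ yields either (a) an essential compressing (or meridional) disk $\tilde E$ of $\tilde H_2$ in $\tilde W'$ with $\partial\tilde E\cap\partial\tilde D=\emptyset$, which after equivariant averaging descends to a disk $E$ in $W'$ that pairs with $D$ to witness weak $L$-reducibility of $H_2$, contradicting our assumption; or (b) $\tilde H_2$ is isotopic into $\tilde H_1$ within $\tilde V_2$, in which case compressing $\tilde W$ along $\tilde D$ confines $\tilde H_1$ to a ball, so $\tilde H_1$ is a $2$-sphere, $(g_1,n_1)\in\{(0,0),(0,1)\}$, and $\tilde M=S^3$. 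Descending via $\tau$ yields $M=S^3$ with $L$ empty or a trivial knot, i.e.\ (i).

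The hard part is the innermost/outermost analysis above: one must simultaneously extract an \emph{essential} disk lying in $\tilde W'$, ensure its boundary misses $\partial\tilde D$, and preserve $\tau$-equivariance so the descended disk retains $L$- or meridional-compressing character. The meridional case is especially delicate since $\tilde D$ then meets the fixed locus of $\tau$, and one must carefully track whether each outermost disk produced lies in $\tilde W$ or $\tilde W'$ and whether the resulting descended disk meets $L$ transversely in at most one point.
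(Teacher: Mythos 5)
First, a point of comparison: the paper does not prove this statement at all --- it is quoted verbatim as Proposition~3.4 of Kobayashi's paper \cite{K2} and used as a black box --- so there is no in-paper proof to measure your argument against. Judged on its own, your overall framework (pass to the $2$-fold branched cover, translate $L$-compressing and meridionally compressing disks of $H_2$ into equivariant compressing disks of $\widetilde{H}_2$, and try to deduce that strong $L$-irreducibility downstairs forces strong irreducibility upstairs) is the standard and reasonable setting for results of this type, and is consistent with how \cite{K2} frames the problem.

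However, the proof has a genuine gap exactly where you yourself flag ``the hard part.'' The claim that a $\tau$-equivariant meridian system $\mathcal{E}$ of $\widetilde{V}_2$, minimizing $|\mathcal{E}\cap\widetilde{H}_2|$, yields via ``a standard innermost-disk/outermost-arc analysis'' either (a) a compressing disk of $\widetilde{H}_2$ lying in $\widetilde{W}'$ with boundary disjoint from $\partial\widetilde{D}$, or (b) an isotopy of $\widetilde{H}_2$ into $\widetilde{H}_1$, is not something such an analysis delivers. An innermost circle of $\mathcal{E}\cap\widetilde{H}_2$ bounds a subdisk of $\mathcal{E}$ that may lie on either side of $\widetilde{H}_2$ (nothing pushes it into $\widetilde{W}'$), its boundary has no reason to avoid $\partial\widetilde{D}$, and emptiness of $\mathcal{E}\cap\widetilde{H}_2$ does not occur since $\widetilde{H}_2$ is a closed separating surface in the interior of the handlebody $\widetilde{V}_2$. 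What is actually needed here is the structure theory of strongly irreducible Heegaard surfaces sitting inside a handlebody (Scharlemann's results, or the Casson--Gordon/Rubinstein--Scharlemann machinery), which is the real mathematical content of the proposition; it cannot be replaced by an innermost-disk count. There are further unacknowledged inputs even in your branch (b): ``confined to a ball, hence a $2$-sphere'' requires Waldhausen/Scharlemann-type standardness, the descent ``$\widetilde{M}=S^3$ implies $M=S^3$ and $L$ trivial'' is the Smith conjecture, and the asserted equivalence between strong $L$-irreducibility of $H_2$ and strong irreducibility of $\widetilde{H}_2$ needs an equivariant version of the loop theorem producing a \emph{disjoint pair} of equivariant disks on opposite sides, not merely equivariant Dehn's lemma for a single disk. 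As written, the argument does not constitute a proof; citing \cite{K2} (as the paper does) or supplying the strongly-irreducible-surface-in-a-handlebody argument in full would be required.
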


Let $M$ be the $3$-sphere or a lens space $($other than $S^2 \times S^1)$,
and $K$ a knot in $M$.
Let $(V_1, t_1) \cup_{H_1} (V_2, t_2)$ and  $(W_1, K) \cup_{H_2} (W_2, \emptyset)$
be a $(1,1)$-splitting and $(2,0)$-splitting of $(M,K)$. 
According to the assumption of Theorem \ref{thm:l=2}, 
we suppose that 
$H_1 \cap H_2$ consists of two loops,
say $l_1$ and $l_2$, which are $K$-essential
both in $H_1$ and in $H_2$.

Since the $(2,0)$-splitting surface $H_2$ separates $M$,
there are two patterns of intersection loops of $H_1 \cap H_2$
in $H_1$:
(1) $l_1$ and $l_2$ together divide $H_1$
into a disk $Q$, an annulus $A_1$ and a torus with one hole $H'_1$,
or
(2) $l_1$ and $l_2$ together divide $H_1$
into two annuli $A_{11}$ and $A_{12}$.
 Since the $(1,1)$-splitting torus $H_1$ also separates $M$,
there are two patterns of intersection loops of $H_1 \cap H_2$
in $H_2$:
(A) $l_1$ and $l_2$ are parallel separating essential loops,
or
(B) they are parallel non-separating essential loops.
 See Figure \ref{fig:l=2}. 
In the following sections, we study each case. 

\begin{figure}[htbp]
\centering
\includegraphics[width=.7\textwidth]{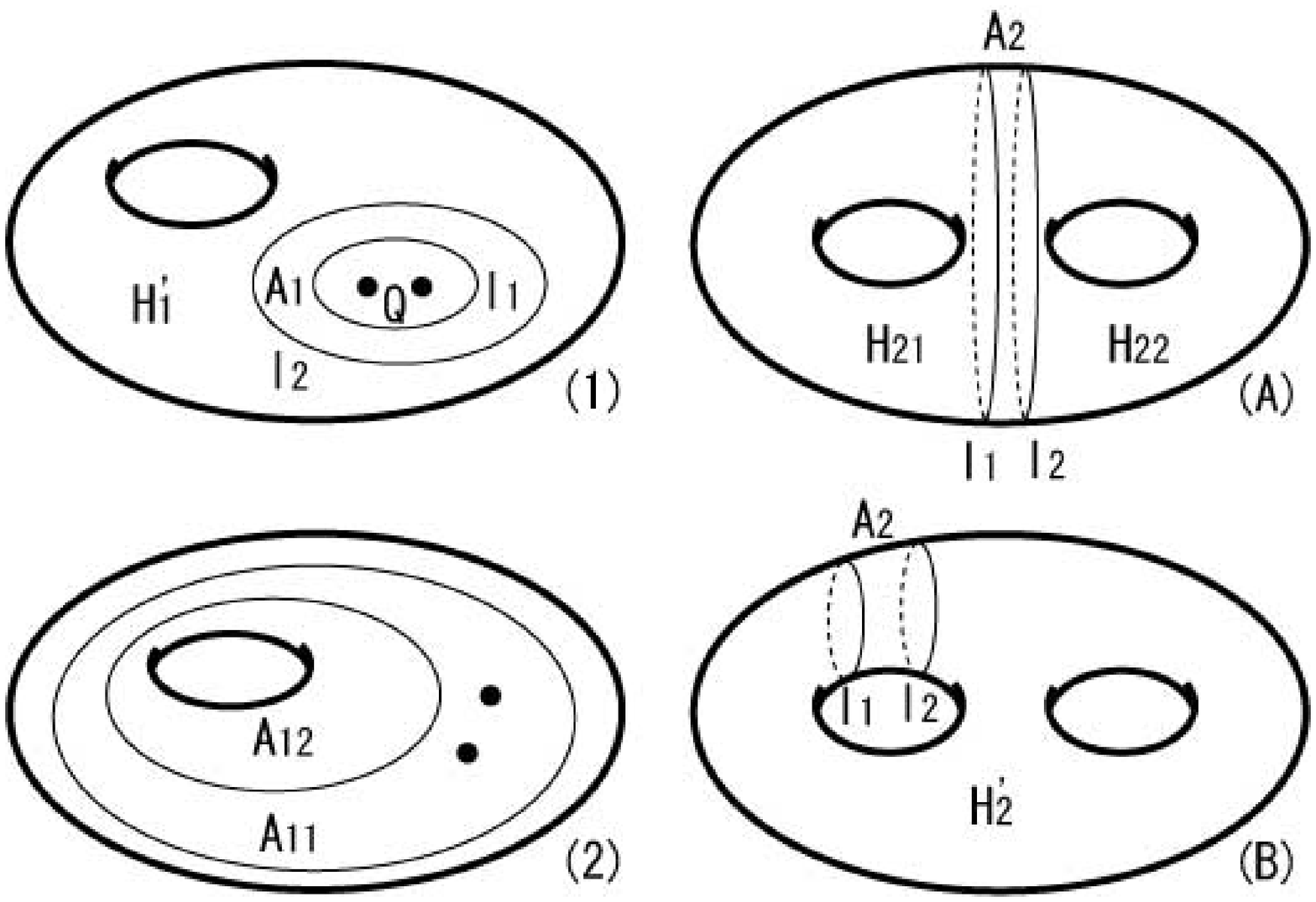}
\label{fig:l=2}
\caption{}
\end{figure}

%%%%%%%%%%%%%%%%%%%%%%%%%%%%%%%%%%%%%%%%%%%%%%%%%%%%

\section{Case (1)(A)}\label{sect:1A}

 In this section, we consider the case
where $H_1 \cap H_2$ is of the pattern (1) in $H_1$,
and of the pattern (A) in $H_2$ in  Figure \ref{fig:l=2}.
 In Case (1),
we assume, without loss of generality,
that $l_1 = \bdd Q$.
 The disk $Q$ contains the two intersection points $K \cap H_1$
because the loop $l_1$ is $K$-essential in $H_1$.
 Then the torus with one hole $H'_1$ and the annulus $A_1$
are disjoint from $K$.
 In particular, the loops $l_1$ and $l_2$ are parallel in $H_1 -K$.
 These loops are $K$-essential but inessential in $H_1$.
 Since the handlebody $W_1$ contains $K$ as a core,
and since $Q$ intersects $K$,
$Q$ and $H'_1$ is contained in $W_1$,
and $A_1$ in $W_2$.
 In Case (A),
for $i=1$ and $2$
the loop $l_i$ bounds a torus with one hole, say $H_{2i}$,
such that $H_{21} \cap H_{22} = \emptyset$.
 The complementary region is an annulus, say $A_2$,
between $l_1$ and $l_2$.
 We can assume, without loss of generality,
that $A_2$ is contained in the solid torus $V_1$,
and $H_{21} \cup H_{22}$ in $V_2$.

 By Lemma 3.1
%\ref{lem:SurfaceIn20} 
in \cite{GH2}
(Lemma 3.10 in \cite{GHY}),
$Q \cup H'_1$ is $K$-compressible or $K$-$\bdd$-compressible
in $(W_1, K)$.
% It is well known
%that an incompressible and $\bdd$-incompressible
%compact orientable $2$-manifold
%properly embedded in a handlebody
%is a disjoint union of $2$-spheres and disks.
%Hence
%the annulus
 $A_1$ is compressible or $\bdd$-compressible
in the handlebody $W_2$.

\begin{lemma}\label{lem:1a-A1compressible}
 Suppose that $A_1$ is compressible in $W_2$,
and that $W_1$ contains either 
$(0)$ a $t_1$-compressing disk of $A_2$ in $(V_1, t_1)$, 
$(1)$ a $t_2$-compressing disk of $H_{21} \cup H_{22}$ in $(V_2, t_2)$,
$(2)$ a $t_2$-$\bdd$-compressing disk of 
$H_{21} \cup H_{22}$ incident to $H_{21}$ in $(V_2, t_2)$,
$(3)$ a $K$-compressing disk of $Q \cup H'_1$ in $(W_1, K)$
or 
$(4)$ a $K$-$\bdd$-compressing disk of $Q \cup H'_1$ 
incident to $Q$ in $(W_1, K)$.
 Then the $(2,0)$-splitting $H_2$ of $(M,K)$ is weakly $K$-reducible,
\end{lemma}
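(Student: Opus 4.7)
The plan is to produce, in each sub-case, a pair of disjoint $K$-compressing (or meridionally compressing) disks of $H_2$ realizing its weak $K$-reducibility, one in $W_1$ and one in $W_2$. The $W_2$-side disk is constructed once, uniformly, from the hypothesis that $A_1$ is compressible in $W_2$: compressing $A_1 \subset W_2$ along its compressing disk yields two parallel disks $D_1, D_2 \subset W_2$ whose boundaries are isotopic to $l_1$ and $l_2$ on $H_2$. Since $l_i$ separates the genus-two surface $H_2$ into the once-punctured tori $H_{2i}$ and $A_2 \cup H_{2,3-i}$ (glued along $l_{3-i}$), it is essential in $H_2$; and since $D_i \subset W_2$ is automatically disjoint from $K \subset W_1$, each $D_i$ is a $K$-compressing disk of $H_2$ on the $W_2$ side.

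For the $W_1$ side, I construct in each of (0)--(4) a $K$-compressing or meridionally compressing disk $E \subset W_1$ of $H_2$ whose boundary can be arranged disjoint from at least one of $l_1, l_2$; pairing $E$ with the appropriate $D_i$ then gives the weak reduction. In cases (0) and (1) the hypothesized disk already does the job: the given $t_1$-compressing disk of $A_2$, respectively $t_2$-compressing disk of $H_{21}\cup H_{22}$, lies in $W_1$ with essential boundary on $H_2 \cap V_1 = A_2$, respectively $H_2 \cap V_2 = H_{21}\cup H_{22}$, and a small isotopy inside $A_2$, respectively inside $H_{21}\cup H_{22}$, removes all intersections with $l_1 \cup l_2$. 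In cases (2), (3), and (4) the given disk has part of its boundary on $Q \cup H'_1 \subset H_1$ and must be extended to a disk bounded on $H_2$. The extension is obtained by adjoining either a sub-annulus of $H'_1$ (cobounded by a loop on $H'_1$ and $l_2 = \bdd H'_1$) or a sub-disk of $Q$ (cobounded by the $\bdd$-compression arc and a sub-arc of $l_1 = \bdd Q$), then pushing the added piece slightly into the interior of $W_1$. A finite sub-case analysis, depending on whether the given loop or arc is $\bdd$-parallel in its component, non-separating on the torus underlying $H'_1$, or surrounds one or both of the punctures $K \cap Q$, produces in every possibility a $K$-compressing or meridional disk of $H_2$ in $W_1$ whose boundary on $H_2$ is isotopic to $l_1$, to $l_2$, or to a non-$\bdd$-parallel essential loop on one of the once-punctured tori $H_{2i}$.

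The main obstacle is cases (2)--(4): one must verify that the extended disk remains $K$-essential on $H_2$ (its boundary does not bound a disk in $H_2$ or a disk in $W_1$ meeting $K$ trivially) and that the resulting boundary is genuinely disjoint from at least one of $l_1, l_2$ rather than coinciding with both. The availability of both $D_1$ and $D_2$ from the first step provides the necessary flexibility: whenever the constructed $W_1$-side boundary happens to be isotopic to $l_i$, we pair the constructed $E$ with $D_{3-i}$, completing the weak $K$-reduction.
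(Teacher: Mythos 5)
Your proposal is correct and follows essentially the same route as the paper's proof: compress $A_1$ to obtain the $W_2$-side disk(s) bounded by $l_1$ (and $l_2$), then in cases (0) and (1) use the hypothesized disk directly, and in cases (2)--(4) perform the compressing or $\bdd$-compressing operation on a copy of $Q$ or $H'_1$ (your ``adjoining a sub-disk of $Q$ or sub-annulus of $H'_1$'') to produce a $K$-compressing or meridionally compressing disk of $H_2$ in $W_1$ with boundary pushed off $l_1\cup l_2$. The only cosmetic difference is that you retain both disks from compressing the annulus for flexibility, whereas the paper keeps a single disk bounded by $l_1$ and arranges the $W_1$-side boundary to lie in the interior of $H_{21}$, $H_{22}$ or $A_2$.
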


\begin{proof}
 Compressing $A_1$,
we obtain a disk $D_2\,(\subset W_2)$ bounded by $l_1$.

 In Case (2), 
the $t_2$-$\bdd$-compressing disk is also 
a $K$-$\bdd$-compressing disk of $Q$
by the unusual definition of $\bdd$-compressing disk.
 (See Definition 2.3. in \cite{GH2}.)
 Hence, in Cases (2), (3) and (4),
we obtain a $K$-compressing or meridionally compressing disk
of $H_{21} \cup H_{22}$ in $(W_1, K)$
by performing a compressing or $\bdd$-compressing operation
on a copy of $Q$ or $H'_1$.
 Hence, in all cases, there is 
a $K$-compressing or meridionally compressing disk $D_1$ 
of $H_{21}$, $H_{22}$ or $A_2$ in $(W_1, K)$.
 Then the disks $D_1$ and $D_2$ together show
that $H_2$ is weakly $K$-reducible.
\end{proof}

\begin{lemma}\label{lem:1a-A1incompressible}
 Suppose that $A_1$ is incompressible in $W_2$
and that $W_1$ contains either 
$(1)$ a $t_2$-compressing disk of $H_{21} \cup H_{22}$,
$(2)$ a $t_2$-$\bdd$-compressing disk of 
$H_{21} \cup H_{22}$ incident to $H_{21}$,
$(3)'$ a $K$-compressing disk of $Q \cup H'_1$ in $V_2$
or $(4)$ a $K$-$\bdd$-compressing disk of $Q \cup H'_1$ incident to $Q$.
 Then $H_2$ is weakly $K$-reducible,
\end{lemma}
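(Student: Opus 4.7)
I would follow the strategy of Lemma \ref{lem:1a-A1compressible}: produce a $K$-compressing or meridionally compressing disk $D_1 \subset W_1$ of $H_2$ with $\bdd D_1 \subset H_{21} \cup H_{22}$, together with a compressing or meridionally compressing disk $D_2 \subset W_2$ of $H_2$ whose boundary is disjoint from $\bdd D_1$. The pair $(D_1, D_2)$ then exhibits weak $K$-reducibility of $H_2$.

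The construction of $D_1$ runs parallel to the proof of Lemma \ref{lem:1a-A1compressible}. In cases $(1)$ and $(2)$, the hypothesized disk is (possibly after a single $\bdd$-compression) a disk of the required form, since its boundary is essential in the one-holed torus $H_{21}$ or $H_{22}$ and hence in $H_2$. In cases $(3)'$ and $(4)$, I would push a copy of $Q$ or $H'_1$ into $W_1$ and apply the hypothesized compression or $\bdd$-compression to that copy; because $\bdd Q = l_1$ and $\bdd H'_1 = l_2$ lie on $\bdd H_{21}$ and $\bdd H_{22}$, the resulting disk has boundary on $H_{21} \cup H_{22}$.

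The new step, compared to Lemma \ref{lem:1a-A1compressible}, is producing $D_2$ without compressing $A_1$. I would use that $A_1$ is separating in $W_2$ (its boundary $l_1 \cup l_2$ separates $\bdd W_2$), so cutting $W_2$ along $A_1$ produces two pieces: a solid torus $X_1$ bounded by the torus $A_1 \cup A_2$ (an incompressible torus in a handlebody bounds a solid torus), and a genus-$2$ handlebody $X_2$ bounded by $A_1 \cup H_{21} \cup H_{22}$ (since cutting a handlebody along an incompressible annulus yields handlebodies). A meridian disk of $X_2$ with boundary on $H_{21} \cup H_{22}$ (avoiding $A_1$) is then a compressing disk of $H_2$ in $W_2$, and serves as $D_2$.

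The main obstacle I anticipate is ensuring that $X_2$ admits such a meridian disk both avoiding $A_1$ and disjoint from $\bdd D_1$. If every meridian of $X_2$ is forced to intersect $\bdd D_1$ after being isotoped off $A_1$, I would adjust using a disk-sliding argument on $\bdd X_2$, or as a last resort invoke Proposition \ref{thm:koba} via the $2$-fold branched cover to reach weak $K$-reducibility from a related setup.
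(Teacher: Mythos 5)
Your construction of the disk $D_1\subset W_1$ follows the paper's first step correctly, but the second half of your plan has a genuine gap: the disk $D_2\subset W_2$ you want need not exist. Since $A_1$ is incompressible in $W_2$, the core of $A_1$ (isotopic to $l_1$) does not bound a disk in $W_2$, so there is no separating disk cutting $W_2$ into solid tori along $l_1$; and in general there is no reason why the genus-two handlebody $W_2$ should contain \emph{any} essential disk with boundary in $H_{21}\cup H_{22}$, i.e.\ disjoint from $l_1\cup l_2$ --- a separating curve on the boundary of a genus-two handlebody can be diskbusting. (Your parenthetical justification for $X_1$, ``an incompressible torus in a handlebody bounds a solid torus,'' is also not a valid statement: handlebodies contain no closed incompressible surfaces, and a torus in a handlebody can bound a nontrivial knot exterior, as the Remark at the end of Section 5 illustrates. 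Here $X_1$ \emph{is} a solid torus, but only because the incompressible, $\bdd$-compressible annulus $A_1$ is parallel to $A_2$.) So the pair $(D_1,D_2)$ cannot be produced by this route, and your ``disk-sliding'' fallback for arranging $\bdd D_1\cap\bdd D_2=\emptyset$ has nothing to slide.

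What you relegate to a ``last resort'' is in fact the entire point of the paper's proof, and it needs to be stated precisely rather than gestured at: incompressibility forces $A_1$ to be $\bdd$-compressible with the arc of the $\bdd$-compressing disk necessarily in $A_2$ (the only component of $H_2-(l_1\cup l_2)$ meeting both $l_1$ and $l_2$), whence $A_1$ is parallel to $A_2$ in $W_2$. Pushing $H_1$ across this parallelism places $H_1$ entirely in $\mathrm{int}\,W_1$, disjoint from the disk $D$, which lies in $V_2\cap W_1$. Proposition \ref{thm:koba} then applies directly and yields weak $K$-reducibility (the trivial-knot alternative being handled by Proposition 4.6 of \cite{GH2}). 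Without this parallelism step --- and without noting that $D$ can be taken in $V_2\cap W_1$ so that the isotoped $H_1$ misses it --- the argument does not close.
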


\begin{proof}
 In Case (4),
the $K$-$\bdd$-compressing disk is incident to $H_{21}$ rather than $A_2$
by the definition of $\bdd$-compressing disk.
 Thus, in all cases,
we can assume that $V_2 \cap W_1$ contains
a $K$-compressing or meridionally compressing disk $D$ of $H_{21} \cup H_{22}$
in $(W_1, K)$
by a similar argument as in the second paragraph 
in the proof of Lemma \ref{lem:1a-A1compressible}.

 Since the annulus $A_1$ is incompressible in the handlebody $W_2$,
it is $\bdd$-compressible in $W_2$.
 Hence $A_1$ is parallel to $A_2$ in $W_2$.
 We can isotope $H_1$ along the parallelism
so that $H_1$ is contained in int\,$W_1$ and is disjoint from $D$.
 Then Proposition \ref{thm:koba} shows
that $H_2$ is weakly $K$-reducible.
\end{proof}

% We call a $(2,0)$-splitting
%$(M, K) = (W_1, K) \cup_H (W_2, \emptyset)$
%{\it semi-stabilized}
%if there is a $K$-compressing disk $D_i$ of $H$ in $(W_i, K \cap W_i)$
%for $i=1$ and $2$
%such that $\bdd D_1$ and $\bdd D_2$ intersect each other
%transversely in precisely two points.

%If a pair $(M, K)$ admits a semi-stabilized strongly $K$-irreducible
%$(2,0)$-splitting,
%then Theorem 7.2 in \cite{GHY} shows that
%(i) the knot $K$ is a torus knot in $M$,
%(ii) the knot $K$ is a satellite knot,
%(iii) the $3$-manifold $M$ admits a Seifert fibered structure
%over the $2$-sphere with three exceptional fibers,
%and the knot $K$ is an exceptional fiber, or
%(iv) the exterior of the knot $K$ contains a non-separating torus.
% When $M$ is the $3$-sphere or a lens space except for $S^2 \times S^1$,
%the conclusions (iii) and (iv) are absurd.

\begin{definition}
We call a $(2,0)$-splitting
$(M, K) = (W_1, K) \cup_{H_2} (W_2, \emptyset)$
{\it semi-stabilized}
if there is a $K$-compressing disk $D_i$ of $H_2$ in $(W_i, K \cap W_i)$
for $i=1$ and $2$
such that $\bdd D_1$ and $\bdd D_2$ intersect each other
transversely in precisely two points.
\end{definition}

\begin{proposition}[Theorem 7.2 in \cite{GHY}]\label{prop:semistabilized}
If $(M, K)$ admits a semi-stabilized strongly $K$-irreducible
$(2,0)$-splitting, then one of the following occurs:
\begin{enumerate}
\renewcommand{\labelenumi}{(\theenumi)}
\item
% (1)
the knot $K$ is a torus knot in $M$;
\item
%(2)
the knot $K$ is a satellite knot;
\item
%(3)
the $3$-manifold $M$ admits a Seifert fibered structure
over the $2$-sphere with three exceptional fibers,
and $K$ is an exceptional fiber; or
\item
%(4)
the exterior of the knot $K$ contains a non-separating torus.
\end{enumerate}
\end{proposition}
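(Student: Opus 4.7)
The plan is to leverage the two-point intersection of the semi-stabilizing disks $D_1$ and $D_2$ through a band-sum construction, and then to use strong $K$-irreducibility to force the resulting combinatorial data into one of four rigid geometric models for $K$.

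First I would label the two points of $\partial D_1 \cap \partial D_2$ as $p, q$; they cut $\partial D_1$ into arcs $a_1, a_1'$ and $\partial D_2$ into arcs $a_2, a_2'$. A regular neighborhood of $\partial D_2$ in $H_2$ is an annulus $A$ that is crossed by $\partial D_1 \cap A$ in two spanning arcs, so $A$ is divided into two rectangles $R, R'$. Band-summing $D_1$ to itself across $R$ (respectively $R'$), using $D_2$ as a guide, produces two new properly embedded disks $E, E' \subset W_1 \setminus K$; the parallel operation on the other side, using $D_1$ to band-sum $D_2$, yields disks $F, F' \subset W_2$. Each of $\partial E, \partial E', \partial F, \partial F'$ is a simple closed curve on $H_2$ disjoint from $K$, and each is generically the boundary of a $K$-compressing disk.

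Strong $K$-irreducibility then implies that every candidate disk on one side must have boundary meeting that of the corresponding original disk on the opposite side, for otherwise the pair would provide a weak $K$-reduction of $H_2$. By construction, the only way this can fail is for the candidate boundary either to be inessential in $H_2 \setminus K$, or to represent a meridian of $K$, or to be isotopic in $H_2 \setminus K$ to $\partial D_1$ or $\partial D_2$. A careful enumeration of which degeneracies can occur simultaneously on all four candidates, combined with the resulting description of how $K$ threads $W_1$, produces a short list of normal forms. Each normal form is then recognized geometrically: a cabling annulus that discloses $K$ as a torus knot; an essential torus in $M \setminus K$ that exhibits $K$ as a satellite; a Seifert fibration over $S^2$ in which the band-sum structure identifies $K$ as an exceptional fiber; or a non-separating torus in the exterior of $K$.

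The main obstacle is precisely this recognition step. Tracking the position of $K$ through the band-sum operation requires a careful innermost-disk and outermost-arc argument in $D_2$ (and $D_1$), and classifying which simultaneous degeneracies can occur involves decomposing $W_i$ along $D_i$ and analyzing the induced $(1,1)$- or $(0,2)$-decomposition of the resulting compression body. Excluding pathological configurations, especially those that ostensibly yield a Seifert fibration over $S^2$ with more than three exceptional fibers or an unexpected horizontal surface, requires invoking classical classification results for genus-two Heegaard splittings of small Seifert-fibered spaces; this is where the bulk of the technical work sits and is the step I would expect to be the main obstacle.
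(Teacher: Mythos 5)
First, a point of reference: this paper does not prove Proposition~\ref{prop:semistabilized} at all --- it is imported verbatim as Theorem~7.2 of \cite{GHY}, so there is no internal proof to measure your attempt against. Judged on its own terms, your proposal has the right opening move (exploit the two points of $\partial D_1 \cap \partial D_2$ to manufacture new candidate compressing disks, then use strong $K$-irreducibility to force every candidate to degenerate), which is indeed the standard way such arguments begin. But the proof stops exactly where the proposition's content begins. The statement \emph{is} the four-way alternative (torus knot / satellite / Seifert fibered over $S^2$ with three exceptional fibers and $K$ exceptional / non-separating torus in the exterior), and your write-up never actually produces the object that yields any of these: no cabling or essential annulus in the exterior $E(K)$ is constructed, no essential torus is exhibited, and no explanation is given for why the Seifert base is $S^2$ with exactly \emph{three} exceptional fibers. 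The sentences ``a careful enumeration \dots produces a short list of normal forms'' and ``each normal form is then recognized geometrically'' are placeholders for the entire proof. The expected mechanism --- assembling an essential annulus or torus in $E(K)$ out of parallel copies of $D_1$, $D_2$ and subsurfaces of $H_2$, and then invoking the standard consequences of an essential annulus/torus in a knot exterior --- is absent.

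There are also two concrete technical problems in the part you did write. First, in a $(2,0)$-splitting $K$ lies in $\mathrm{int}\,W_1$ and $K \cap H_2 = \emptyset$, so the degeneracy ``the candidate boundary represents a meridian of $K$'' is not meaningful for a curve on $H_2$; the disks obtained by band-summing copies of $D_1$ are automatically disjoint from $K$, and the relevant degeneracies are only ``inessential in $H_2$'' or ``bounds no essential disk on the required side.'' Second, the disk $E$ obtained by band-summing two copies of $D_1$ along one arc $a$ of $\partial D_2$ does \emph{not} automatically have boundary disjoint from $\partial D_2$: the complementary arc $a'$ of $\partial D_2$ still has both endpoints inside $N(\partial D_1 \cup a)$ and must cross $\partial E$ at least twice. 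So the dichotomy ``either $\partial E \cap \partial D_2 = \emptyset$ (weak $K$-reducibility) or $E$ is degenerate'' fails as stated. One must instead work with the boundary curves of a regular neighborhood of the full $1$-complex $\partial D_1 \cup \partial D_2$ (a four-holed sphere or twice-holed torus in $H_2$, depending on the signs of the two intersections), determine on which side each such curve can bound a disk, and only then run the strong-irreducibility argument. As it stands, the proposal is a plausible plan with a correct first step, but the gap between ``all candidates degenerate'' and the four listed conclusions is the theorem, and it is not bridged.
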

When $M$ is the $3$-sphere or a lens space except for $S^2 \times S^1$,
the conclusions $(3)$ and $(4)$ do not occur.

\begin{lemma}\label{lem:1a-H'_1H_22}
 Suppose 
%that $A_1$ is incompressible in $W_2$
%that $Q \cup H'_1$ is $K$-incompressible in $(W_1, K)$
that $V_2 \cap W_1$ contains
a $t_2$-$\bdd$-compressing disk of 
$H_{21} \cup H_{22}$ incident to $H_{22}$
or a $K$-$\bdd$-compressing disk of $Q \cup H'_1$ incident to $H'_1$.
 Then one of the following conditions holds.
\begin{enumerate}
\renewcommand{\labelenumi}{(\theenumi)}
\item
%(1)
 $H_2$ is weakly $K$-reducible.
\item
%(2)
 We can isotope $H_1$ in $(M,K)$
so that $H_1 \cap H_2$ is a single loop
which is $K$-essential both in $H_1$ and in $H_2$.
\item
%(3)
 $H_2$ is semi-stabilized,
and $K$ is a torus knot or a satellite knot.
\end{enumerate}
 Moreover,
if either $A_1$ is compressible in $W_2$
or $Q \cup H'_1$ is $K$-incompressible, 
then $(1)$ or $(2)$ holds.
\end{lemma}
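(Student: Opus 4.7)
The plan is to use the disk $E \subset V_2 \cap W_1$ provided by the hypothesis---with $\partial E = \alpha \cup \beta$ where $\alpha \subset H_{22}$ and $\beta \subset H'_1$, both arcs having endpoints on $l_2$---to either isotope $H_1$ and $H_2$ so as to reduce their intersection to the single loop $l_1$, or to construct essential disks of $H_2$ on both sides from which weak $K$-reducibility or semi-stabilization follows. The case analysis will parallel that of Lemmas \ref{lem:1a-A1compressible} and \ref{lem:1a-A1incompressible}, branching on the compressibility of $A_1$ in $W_2$ and of $Q \cup H'_1$ in $(W_1, K)$.

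Suppose first that $A_1$ is compressible in $W_2$. A compression of $A_1$ produces a disk $D_2 \subset W_2$ with $\partial D_2 = l_2$, serving as a $K$-compressing disk of $H_2$ on the $W_2$ side. Combining $E$ with $D_2$, one would perform an ambient isotopy pushing $H_{22}$ across $N(E) \cup N(D_2)$ to eliminate $l_2$ from $H_1 \cap H_2$ and leave $l_1$ as the sole intersection loop; this gives conclusion (2). If this isotopy is obstructed by a residual $K$-essential curve, that obstruction itself provides a $K$-compressing or meridionally compressing disk of $H_2$ on the $W_1$ side which, paired with $D_2$, gives conclusion (1).

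Now suppose $A_1$ is incompressible in $W_2$. Being an essential annulus in the handlebody $W_2$, $A_1$ is then $\partial$-compressible and hence parallel to $A_2$ in $W_2$. If $Q \cup H'_1$ is $K$-incompressible in $(W_1, K)$, then by Lemma 3.1 of \cite{GH2} it is $K$-$\partial$-compressible, say via a disk $E'$; combining $E$ and $E'$ in $V_2 \cap W_1$ yields a $K$-compressing or meridionally compressing disk $D$ of $H_2$, and, exactly as in the proof of Lemma \ref{lem:1a-A1incompressible}, the $A_1 \parallel A_2$ parallelism lets us isotope $H_1$ into int\,$W_1$ disjoint from $D$, so that Proposition \ref{thm:koba} yields (1). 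If instead $Q \cup H'_1$ is $K$-compressible in $(W_1, K)$, a $K$-compressing disk $D'$ of $H'_1$ in $W_1$ can be combined with $E$ along $\beta$ to produce a $K$-compressing disk $D_1$ of $H_2$ in $W_1$; pairing $D_1$ with a $W_2$-side disk $D_2$ coming from the $A_1 \parallel A_2$ parallelism, one has either $|\partial D_1 \cap \partial D_2| = 0$, giving (1), or $|\partial D_1 \cap \partial D_2| = 2$, showing that $H_2$ is semi-stabilized, in which case Proposition \ref{prop:semistabilized} together with the hypothesis on $M$ yields (3).

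The moreover clause then follows, since the semi-stabilized subcase arises only when both $A_1$ is incompressible and $Q \cup H'_1$ is $K$-compressible; if either $A_1$ is compressible or $Q \cup H'_1$ is $K$-incompressible, we land in a branch giving (1) or (2). The principal technical obstacle is the explicit construction of the disk $D_1$ from $D'$ and $E$ and the determination of $|\partial D_1 \cap \partial D_2|$, since this intersection number is what distinguishes weak $K$-reducibility from the semi-stabilization leading to (3).
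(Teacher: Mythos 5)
Your proposal reproduces the outer case structure (compressibility of $A_1$ in $W_2$, $K$-(in)compressibility of $Q\cup H'_1$, with conclusion (3) confined to the branch where both fail), but the central constructions are missing, and the step you defer as ``the principal technical obstacle'' is in fact the whole content of the lemma. The hypothesis disk $E$ is incident to $H_{22}$ or to $H'_1$, and $\bdd$-compressing the once-punctured torus $H'_1$ along an essential arc produces an \emph{annulus}, not a disk; so unlike the cases treated in Lemmas \ref{lem:1a-A1compressible} and \ref{lem:1a-A1incompressible} (disks incident to $Q$ or $H_{21}$), $E$ does not directly yield a $K$-compressing or meridionally compressing disk of $H_2$. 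Your phrases ``combining $E$ with $D_2$'' and ``combining $E$ and $E'$ \dots yields a $K$-compressing or meridionally compressing disk $D$ of $H_2$'' assert exactly the thing that needs an argument, and in the second instance the assertion is false when $E'$ is itself incident to $H'_1$. The paper's proof instead \emph{isotopes} $H_1$ along $E$ past the arc $\bdd E\cap H_{22}$, turning $H'_1$ into an annulus $A$ and $A_1$ into a pair of pants $P$ (parallel to $P'$ when $A_1$ was incompressible), and then runs a second round of $K$-compressibility analysis on $Q\cup A$ and $t_2$-compressibility analysis on $H_{21}\cup A'$; conclusion (2) arises precisely when $A$ is found to be parallel to $A'$, and the disks realizing (1) or (3) are extracted from this second round. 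None of that machinery appears in your sketch.

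Two further concrete failures. First, in the $A_1$-compressible case you claim an ambient isotopy ``pushing $H_{22}$ across $N(E)\cup N(D_2)$'' eliminates $l_2$, with the fallback that ``if this isotopy is obstructed \dots that obstruction itself provides a \dots disk''; neither half of this is an argument --- a disk in $W_2$ bounded by $l_2$ does not let you isotope the once-punctured torus $H'_1$ out of $W_1$, and obstructions to isotopies do not automatically produce compressing disks. Second, in the semi-stabilized branch you invoke ``a $W_2$-side disk $D_2$ coming from the $A_1\parallel A_2$ parallelism''; a parallelism produces no essential disk by itself. In the paper that disk is a $t_2$-$\bdd$-compressing disk $Z$ of $H_{21}\cup A'$ lying in $W_2$, extended across the parallelism of $P$ and $P'$, and the count $|\bdd Z'\cap\bdd E'|\le 2$ is read off from how $\bdd Z$ meets $P$ along $l_1$. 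Without constructing these disks you cannot determine the intersection number that separates conclusion (1) from conclusion (3), so the proof as proposed does not go through.
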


\begin{figure}[htbp]
\begin{center}
\includegraphics[width=.7\textwidth]{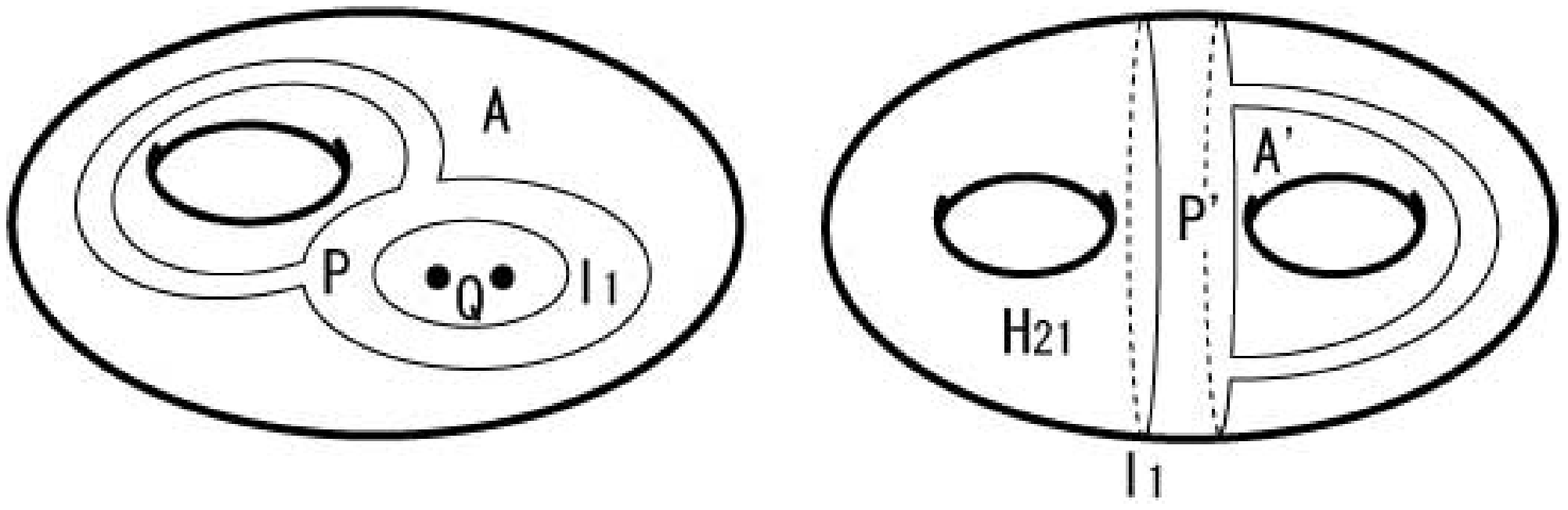}
\end{center}
\caption{}
\label{fig:10-2}
\end{figure}

\begin{proof}
 We isotope $H_1$ along the $\bdd$-compressing disk $D$
slightly beyond the arc $\bdd D \cap H_{22}$.
 Then, 
by the definition of $\bdd$-compressing disk,
$H'_1$ is deformed into an annulus $A$,
each of the boundary loops $\bdd A$ is non-separating in $H_{22}$,
and these loops cobound an annulus, say $A'$, in $H_{22}$.
 The annulus $A_1$ is deformed into a disk with two holes, say $P$,
in the handlebody $W_2$.
 Set $P' = \text{cl}\,(H_2 -(H_{21} \cup A'))$.
 See Figure \ref{fig:10-2}.
 Note that $P$ is parallel to $P'$ in $W_2$
if $A_1$ is $\bdd$-compressible in $W_2$ before the isotopy.
 By Lemma 3.1 in \cite{GH2}, 
%\ref{lem:SurfaceIn20},
$Q \cup A$ is $K$-compressible or $K$-$\bdd$-compressible
in $(W_1, K)$.

{\it Case $(a)$.}
 We first assume that
$Q \cup A$ is $K$-incompressible and 
has a $K$-$\bdd$-compressing disk, say $R$,
in $(W_1, K)$.
 (This holds if $Q \cup H'_1$ is $K$-incompressible before the isotopy.)
 If the arc $\bdd R \cap H_2$ is contained in $H_{21}$,
then $R$ is there also before the isotopy,
and $H_2$ is weakly $K$-reducible
by Lemmas \ref{lem:1a-A1compressible} and \ref{lem:1a-A1incompressible}.
 If the arc $\bdd R \cap H_2$ is contained in the annulus $A'$,
then $A$ is parallel to $A'$ in $W_1 - K$.
 We can isotope $H_1$ along the parallelism
so that $H_1$ and $H_2$ intersect each other only in the loop $l_1$
which is $K$-essential both in $H_1$ and in $H_2$.
 Hence we may assume
that the arc $\bdd R \cap H_2$ is contained
in the disk with two holes $P'$.
 If the disk $R$ is incident to the annulus $A$,
then performing a $K$-$\bdd$-compressing operation on $A$ along $R$,
we obtain a disk, say $R'$, disjoint from $K$.
 Since the arc $\bdd R \cap H_2$ is an essential arc in $P'$
and since it connects the two loops $\bdd A'$,
the boundary loop $\bdd R'$ is parallel to $l_1 = \bdd Q$.
 This implies that the disk $Q$ is $K$-compressible in $(W_1, K)$.
 This contradicts our assumption.
 Hence $R$ is incident to $Q$.
 $\bdd$-compressing $Q$ along $R$,
we obtain a disk, say $R_1$,
which intersects $K$ transversely in a single point.
 We can isotope $R_1$ in $(W_1, K)$
so that it is bounded by a component of $\bdd A'$.
 If $A_1$ is compressible before the isotopy,
then a compressing operation on $A_1$ yields a disk bounded by $l_1$ in $W_2$.
 This disk together with $R_1$ shows
that $H_2$ is weakly $K$-reducible.

 Hence we can assume that $A_1$ is incompressible
and $\bdd$-compressible in $W_2$. 
 In this case, recall that $P$ is parallel to $P'$ in $W_2$.
 By Lemma 2.4 in \cite{GH2} (Lemma 2.8 in \cite{Hy3}),  
%\ref{lem:H},
$H_{21} \cup A'$ is $t_2$-compressible or $t_2$-$\bdd$-compressible
in $(V_2, t_2)$.
 First we consider the former case.
 If $H_{21} \cup A'$ has a $t_2$-compressing disk in $W_1$,
then compressing a copy of $H_{21} \cup A'$,
we obtain a $K$-compressing disk of $Q \cup A$,
which contradicts our assumption.
 Hence $H_{21} \cup A'$ has a $t_2$-compressing disk in $W_2$.
 This disk together with $R_1$ shows
that $H_2$ is weakly $K$-reducible.

 We consider the latter case,
where $H_{21} \cup A'$ has a $t_2$-$\bdd$-compressing disk, say $C$.
 If $C$ is contained in $W_1$,
then it is also a $K$-$\bdd$-compressing disk of $Q \cup A$.
% Note that the arc $\bdd C \cap H_2$ is contained in $H_{21} \cup A'$.
 When $C$ is incident to $A'$,
the annulus $A$ is parallel to $A'$ in $(W_1, K)$,
and we obtain the conclusion (2) of this Lemma.
 When $C$ is incident to $H_{21}$, 
it is a $K$-$\bdd$-compressing disk of $H_{21}$ before the isotopy,
and Lemmas \ref{lem:1a-A1compressible} and \ref{lem:1a-A1incompressible} imply
that $H_2$ is weakly $K$-reducible.
 Hence we may assume that $C$ is contained in $W_2$.
 If $C$ is incident to $H_{21}$,
then this disk is extended to an essential disk
with boundary loop in $H_{21} \cup P'$
because $P$ is parallel to $P'$ in $W_2$.
 This disk together with $R_1$
shows that
%the $(2,0)$-splitting
$H_2$ is weakly $K$-reducible.
 If $C$ is incident to the annulus $A'$,
then this disk is extended to an essential disk in $W_2$
such that its boundary loop intersects $\bdd R_1$
transversely in a single point
because $P$ is parallel to $P'$ in $W_2$.
 Then this disk together with $R_1$
shows that $H_2$ is meridionally stabilized,
and hence is weakly $K$-reducible
by Proposition 4.7 in \cite{GH2}
(Proposition 2.14 in \cite{GHY}).
%\ref{prop:20weak}.
%(see, for example, Lemma 4.2 in \cite{GHY}).

{\it Case $(b)$.}
 We consider the case 
where $Q \cup A$ has a $K$-compressing disk $E$ in $(W_1, K)$.
 Note that $E$ gives a $K$-compressing disk of $Q \cup H'_1$ before the isotopy.
 When $A_1$ is compressible in $W_2$,
Lemma \ref{lem:1a-A1compressible}
shows that $H_2$ is weakly $K$-reducible.
 Hence we can assume that $A_1$ is incompressible 
and $\bdd$-compressible in $W_2$.
 Recall that $P$ is parallel to $P'$ in $W_2$ after the isotopy.
 By Lemma \ref{lem:1a-A1incompressible},
we can assume that $E$ is contained in $W_1 \cap V_1$.
 Then a $K$-compressing operation on $Q \cup H'_1$
yields a $K$-compressing disk $E'$ of the annulus $A_2$ in $(W_1, K)$.
 After an adequate isotopy,
we may assume that $\bdd E'= l_1$ 
and $E'$ is disjoint from (int\,$Q) \cup H'_1$.
 After the isotopy along $D$,
$H_{21} \cup A'$ is $t_2$-compressible 
or $t_2$-$\bdd$-compressible in $(V_2, t_2)$
by Lemma 2.4 in \cite{GH2}.
 We first consider the case 
where $H_{21}\cup A'$ is $t_2$-compressible.
 If the compressing disk is in $W_2$,
then it shows that $H_2$ is weakly $K$-reducible
together with $E'$.
 If it is in $W_1$,
then Lemma \ref{lem:1a-A1incompressible} shows
that $H_2$ is weakly $K$-reducible.

 Hence we can assume 
that $H_{21} \cup A'$ is $t_2$-incompressible in $(V_2, t_2)$, 
and has a $t_2$-$\bdd$-compressing disk, say $Z$,
in $(V_2, t_2)$.
 First suppose
that $Z$ is incident to $A'$.
 If $Z$ is contained in $W_1$,
then $A$ is parallel to $A'$ in $W_1 - K$,
and the conclusion (2) holds.
 If $Z$ is contained in $W_2$,
then this disk gives an essential disk in $W_2$
such that its boundary loop is disjoint from $\bdd E'$,
since $P$ is parallel to $P'$ in $W_2$.
 This shows that $H_2$ is weakly $K$-reducible.

 Thus we may assume
that $Z$ is incident to $H_{21}$.
 If $Z$ is contained in $W_1$,
then $H_2$ is weakly $K$-reducible
by Lemma \ref{lem:1a-A1incompressible}.
 Hence we may assume that $Z$ is contained in $W_2$.
 The boundary loop $\bdd Z$ intersects $P$
in an essential arc with both endpoints in $l_1$.
 Since $P$ and $P'$ are parallel in $W_2$,
$Z$ gives an essential disk in $W_2$
such that its boundary loop intersects $\bdd E'$
transversely in two points.
 Thus $H_2$ is semi-stabilized,
and the conclusion (3) holds by Proposition \ref{prop:semistabilized}.
\end{proof}

\begin{lemma}\label{lem:1a-A1compressible2}
 Suppose that $A_1$ is compressible in $W_2$.
 Then either $H_2$ is weakly $K$-reducible,
or
we can isotope $H_1$ in $(M,K)$
so that $H_1 \cap H_2$ is a single loop
which is $K$-essential both in $H_1$ and in $H_2$.
\end{lemma}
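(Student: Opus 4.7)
The plan is to apply Lemma 3.1 of \cite{GH2} to $Q\cup H'_1$ and then dispatch each resulting case via Lemma \ref{lem:1a-A1compressible} or the ``moreover'' clause of Lemma \ref{lem:1a-H'_1H_22}. The standing hypothesis that $A_1$ is compressible in $W_2$ feeds directly into both of these lemmas, so in every case the desired alternative should drop out.

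First, Lemma 3.1 of \cite{GH2} asserts that $Q\cup H'_1$ admits a $K$-compressing disk or a $K$-$\bdd$-compressing disk in $(W_1,K)$. If it is $K$-compressible, hypothesis $(3)$ of Lemma \ref{lem:1a-A1compressible} gives that $H_2$ is weakly $K$-reducible. Otherwise fix a $K$-$\bdd$-compressing disk $R$; if the arc $\bdd R\cap(Q\cup H'_1)$ lies on $Q$, hypothesis $(4)$ of Lemma \ref{lem:1a-A1compressible} again gives weak $K$-reducibility. If instead this arc lies on $H'_1$ and $R$ can be arranged to lie in $V_2\cap W_1$, then the ``moreover'' clause of Lemma \ref{lem:1a-H'_1H_22}, combined with our hypothesis that $A_1$ is compressible in $W_2$, forces conclusion $(1)$ or $(2)$ of that lemma, which are precisely the two alternatives claimed here.

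The delicate subcase is when the $K$-$\bdd$-compressing disk $R$ incident to $H'_1$ necessarily sits in $V_1\cap W_1$, so that $\bdd R\cap H_2$ lies in the annulus $A_2$. I would analyze that boundary arc: if it is inessential in $A_2$, push it through $l_2$ to reroute $R$ into a $K$-compressing disk of $H'_1$ and fall back to the first case; if it is essential, the same disk, by the symmetric interpretation of $\bdd$-compressing disks noted in Definition 2.3 of \cite{GH2}, doubles as a $\bdd$-compressing disk of $A_2$ in $V_1$ with arc on $H'_1$. Combining this with the given compressing disk of $A_1$ in $W_2$, I would either extract a $t_1$-compressing disk of $A_2$ certifying hypothesis $(0)$ of Lemma \ref{lem:1a-A1compressible}, or perform a slide of $H'_1$ along $R$ that eliminates the intersection loop $l_2$, leaving $H_1\cap H_2=l_1$ and hence giving the one-loop conclusion. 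The main technical obstacle is verifying that this slide does not create new intersections of $H_1$ with $H_2$ and that the surviving loop $l_1$ remains $K$-essential in both $H_1$ and $H_2$; the parallelism arguments used in Case $(a)$ of Lemma \ref{lem:1a-H'_1H_22} should serve as the template.
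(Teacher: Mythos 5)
Your first two paragraphs reproduce the paper's argument: apply Lemma 3.1 of \cite{GH2} to $Q\cup H'_1$, send the $K$-compressible case and the case of a $K$-$\bdd$-compressing disk incident to $Q$ to hypotheses $(3)$ and $(4)$ of Lemma \ref{lem:1a-A1compressible}, and send the case of a disk incident to $H'_1$ lying in $V_2\cap W_1$ to the ``moreover'' clause of Lemma \ref{lem:1a-H'_1H_22}, using the standing hypothesis that $A_1$ is compressible. Up to there you are fine.

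The problem is your third paragraph. The ``delicate subcase'' you set up --- a $K$-$\bdd$-compressing disk $R$ of $Q\cup H'_1$ lying in $V_1\cap W_1$, so that $\bdd R\cap H_2\subset A_2$ --- does not occur, and the paper's entire proof of this step is the one-line observation that it does not: the arc $\bdd R\cap(Q\cup H'_1)$ lies in a single component of $Q\cup H'_1$, so both of its endpoints lie on $l_1=\bdd Q$ or both on $l_2=\bdd H'_1$; hence the arc $\bdd R\cap A_2$ would have both endpoints on one boundary circle of the annulus $A_2$ and be inessential there, which is excluded by the definition of $\bdd$-compressing disk (Definition 2.3 in \cite{GH2}) --- the same exclusion the paper invokes in Lemmas \ref{lem:1a-A1incompressible} and \ref{lem:1b-QHincompressible}. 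Missing this, you are forced to argue a vacuous case, and the argument you sketch is not sound: pushing an inessential arc of $A_2$ across $l_2$ lands it in $H_{22}\subset H_2$, giving a $\bdd$-compressing disk incident to $H_{22}$, not ``a $K$-compressing disk of $H'_1$''; the alternative branch (an essential arc in $A_2$) cannot arise for the reason just given; and the ``slide eliminating $l_2$,'' whose key verification you explicitly leave open, is never justified. Replace the third paragraph by the definitional observation and your proof coincides with the paper's.
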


\begin{proof}
 $Q \cup H'_1$ is $K$-compressible or $K$-$\bdd$-compressible in $(W_1, K)$ 
by Lemma 3.1 in \cite{GH2}.
 In the former case, we have the conclusion 
by Lemma \ref{lem:1a-A1compressible}.
 In the latter case, 
the $K$-$\bdd$-compressing disk is not incident to $A_2$ 
by the definition of $\bdd$-compressing disk. 
 Hence the conclusion holds 
by Lemmas \ref{lem:1a-A1compressible} and \ref{lem:1a-H'_1H_22}.
\end{proof}

\begin{lemma}\label{lem:1a-QHcompressible}
 Suppose that $A_1$ is incompressible in $W_2$. 
and that $Q \cup H'_1$ is $K$-compressible in $(W_1, K)$.
 Then one of the following conditions holds.
\begin{enumerate}
\renewcommand{\labelenumi}{(\theenumi)}
\item
%(1)
 $H_2$ is weakly $K$-reducible.
\item
%(2)
 We can isotope $H_1$ in $(M,K)$
so that $H_1 \cap H_2$ is a single loop
which is $K$-essential both in $H_1$ and in $H_2$.
\item
%(3)
 $H_2$ is semi-stabilized,
and $K$ is a torus knot or a satellite knot.
\end{enumerate}
\end{lemma}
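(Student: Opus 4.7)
The proof will follow the same template as Case $(b)$ of the proof of Lemma \ref{lem:1a-H'_1H_22}, which handled a closely related situation. Roughly, we use the hypothesized $K$-compressing disk of $Q \cup H'_1$ to produce a $K$-compressing disk $E'$ of $A_2$ in $(W_1, K)$, use the incompressibility of $A_1$ in $W_2$ to conclude that $A_1$ is parallel to $A_2$, and then apply Lemma 2.4 in \cite{GH2} to $H_{21} \cup H_{22}$ in $(V_2, t_2)$ together with Lemmas \ref{lem:1a-A1incompressible} and \ref{lem:1a-H'_1H_22} and Proposition \ref{prop:semistabilized} to funnel every sub-case into one of the three stated conclusions.

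In detail, let $E$ be the hypothesized $K$-compressing disk of $Q \cup H'_1$. If $E \subset V_2 \cap W_1$, then Lemma \ref{lem:1a-A1incompressible} (case $(3)'$) immediately gives conclusion (1), so we may assume $E \subset V_1 \cap W_1$. A standard compression argument inside the region $V_1 \cap W_1$ (whose frontier in $M$ consists of $Q \cup H'_1$ and $A_2$) then produces a disk $E'$ in $V_1 \cap W_1 \subset W_1$ with $\partial E'$ a loop on $H_2$ parallel to $l_1$, disjoint from $K$; since $l_1$ is $K$-essential and essential on $H_2$, $E'$ is a $K$-compressing disk of $H_2$ (indeed, of the annulus $A_2$) in $(W_1, K)$, exactly as constructed in Case $(b)$ of the proof of Lemma \ref{lem:1a-H'_1H_22}. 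Next, the incompressibility of $A_1$ in the handlebody $W_2$ forces $A_1$ to be $\partial$-compressible there, and the only possibility is that $A_1$ is parallel to $A_2$ in $W_2$.

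We then apply Lemma 2.4 in \cite{GH2} to $H_{21} \cup H_{22} \subset (V_2, t_2)$, obtaining either a $t_2$-compressing disk or a $t_2$-$\partial$-compressing disk $Z$. If such a disk lies in $V_2 \cap W_1$, the conclusion is supplied by Lemma \ref{lem:1a-A1incompressible} (cases $(1)$ and $(2)$) when it is a compressing disk or a $\partial$-compressing disk incident to $H_{21}$, and by Lemma \ref{lem:1a-H'_1H_22} when $Z$ is incident to $H_{22}$. If instead $Z$ lies in $V_2 \cap W_2$, we use the parallelism $A_1 \parallel A_2$ just established to push the arc $\partial Z \cap A_1$ across the product region to an arc on $A_2$, producing an essential disk $Z'$ in $W_2$ with $\partial Z' \subset H_2$; together with $E'$ this exhibits $H_2$ either as weakly $K$-reducible (when $\partial Z'$ is disjoint from $\partial E' = l_1$) or as semi-stabilized (when $\partial Z'$ meets $\partial E'$ transversely in two points), in which latter case Proposition \ref{prop:semistabilized} yields conclusion (3).

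The main obstacle will be organizing the sub-case analysis in the last paragraph, particularly verifying that when $Z$ is incident to $H_{21}$ and lies in $V_2 \cap W_2$, the extended boundary curve $\partial Z'$ meets $l_1$ in precisely two points, so that the semi-stabilized conclusion applies, while the $H_{22}$-incident analogue yields a boundary curve disjoint from $l_1$ and hence weak $K$-reducibility. This bookkeeping mirrors the closing paragraphs of Lemma \ref{lem:1a-H'_1H_22} Case $(b)$; because we work without the preliminary isotopy used there, the tracking should be somewhat cleaner, but one must still carefully trace how $\partial Z$ sits in $A_1$ under the identification $A_1 \leftrightarrow A_2$ to distinguish conclusions (1) and (3).
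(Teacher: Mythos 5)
Your argument is essentially the paper's: compress $Q \cup H'_1$ to obtain a disk in $W_1$ bounded by (a loop parallel to) $l_1$, apply Lemma 2.4 of \cite{GH2} to $H_{21} \cup H_{22}$ in $(V_2, t_2)$, and dispatch the resulting sub-cases through Lemmas \ref{lem:1a-A1incompressible} and \ref{lem:1a-H'_1H_22} (the latter being the only source of conclusions (2) and (3), since its ``Moreover'' clause is unavailable here). The one place you diverge is the sub-case of a $t_2$-$\partial$-compressing disk $Z$ of $H_{21}\cup H_{22}$ lying in $W_2$: you treat it as a live case, invoke the parallelism $A_1 \parallel A_2$ to extend $Z$ to an essential disk $Z'$, and flag the intersection count of $\partial Z'$ with $l_1$ as the main remaining difficulty. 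That difficulty is illusory: such a $Z$ would have its arc $\partial Z \cap H_1$ in $A_1$ with both endpoints on a single boundary circle of $A_1$, hence inessential in $A_1$, and since $A_1$ is disjoint from $K$ this is forbidden by the (unusual) definition of $\partial$-compressing disk; the paper disposes of the case in one sentence, and consequently never needs the parallelism $A_1 \parallel A_2$ at all. Your fallback analysis of that vacuous case would likely go through, and your separate treatment of the compressing disk of $Q\cup H'_1$ according to whether it lies in $V_1$ or $V_2$ (via case $(3)'$ of Lemma \ref{lem:1a-A1incompressible}) is a harmless refinement of the paper's single compression step, so the proof is correct; it just does avoidable work exactly where you anticipated trouble.
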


\begin{proof}
% $A_1$ is parallel to the annulus $A_2$ in $W_2$.
% Let $D$ be a $K$-compressing disk of $Q \cup H'_1$ in $(W_1, K)$.
% If $D$ is contained in $V_2$,
%then we obtain the conclusion (1) 
%by Lemma \ref{lem:1a-A1incompressible}.
% Hence we may assume
%that $D$ is contained in $V_1$.
% Compressing a copy of $Q$ or $H'_1$ along $D$,
%we obtain a disk, say $D_1$, disjoint from $K$.
% We can isotope $D_1$ in $(W_1, K)$
%so that $D_1 \cap (Q \cup H'_1) = \bdd D \cap \bdd Q = l_1$.
% This disk $D_1$ forms a $K$-compressing disk of the annulus $A_2$.
 Compressing $Q \cup H'_1$
we obtain a disk, say $D$, disjoint from $K$.
 An adequate isotopy moves $D$ so that $\bdd D = l_1$.

 $H_{21} \cup H_{22}$
is $t_2$-compressible or $t_2$-$\bdd$-compressible in $(V_2, t_2)$
by Lemma 2.4 in \cite{GH2}.
%\ref{lem:H}.
 In the former case,
let $R$ be a $t_2$-compressing disk of $H_{21} \cup H_{22}$.
 If $R$ is contained in $W_1$, 
then $H_2$ is weakly $K$-reducible by Lemma \ref{lem:1a-A1incompressible}.
 If $R$ is contained in $W_2$,
the disks $D$ and $R$ together show
that $H_2$ is weakly $K$-reducible.

 Hence we may assume
that $H_{21} \cup H_{22}$ 
%is $t_2$-incompressible in $(V_2, t_2)$ and 
has a $t_2$-$\bdd$-compressing disk $R'$ in $(V_2, t_2)$.
 Since $H_1 \cap W_2 =A_1$ is an annulus disjoint from $K$,
the disk $R'$ is contained in $W_1$ rather than $W_2$
because of the definition of $t_2$-$\bdd$-compressing disk.
 Thus $R'$ is contained in $V_2 \cap W_1$,
and we obtain the conclusion
by Lemmas \ref{lem:1a-A1incompressible} and \ref{lem:1a-H'_1H_22}.
\end{proof}

\begin{lemma}\label{lem:1a-QHincompressible}
 Suppose that $A_1$ is incompressible in $W_2$
and that $Q \cup H'_1$ is $K$-incompressible in $(W_1, K)$.
 Then either $H_2$ is weakly $K$-reducible,
or
we can isotope $H_1$ in $(M,K)$
so that $H_1 \cap H_2$ is a single loop
which is $K$-essential both in $H_1$ and in $H_2$.
\end{lemma}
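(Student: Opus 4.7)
The plan is to reduce the statement directly to Lemmas~\ref{lem:1a-A1incompressible} and~\ref{lem:1a-H'_1H_22}, in a pattern parallel to the proof of Lemma~\ref{lem:1a-A1compressible2}. First I would apply Lemma~3.1 of~\cite{GH2} to the surface $Q \cup H'_1$ in $(W_1, K)$: it must be $K$-compressible or $K$-$\bdd$-compressible. Our hypothesis that $Q \cup H'_1$ is $K$-incompressible forces the existence of a $K$-$\bdd$-compressing disk $R$ of $Q \cup H'_1$ in $(W_1, K)$.

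Next I would split by which piece of $Q \cup H'_1$ the disk $R$ is incident to. By the unusual definition of $\bdd$-compressing disk (Definition~2.3 of~\cite{GH2}), the arc $\bdd R \cap H_2$ cannot lie on the complementary annulus $A_2$; hence $R$ is incident either to $Q$ or to $H'_1$. If $R$ is incident to $Q$, then together with the standing hypothesis that $A_1$ is incompressible in $W_2$, case $(4)$ of Lemma~\ref{lem:1a-A1incompressible} applies and yields that $H_2$ is weakly $K$-reducible, which is the first alternative of the conclusion.

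If instead $R$ is incident to $H'_1$, then the same convention on the arc $\bdd R \cap H_2$ (now forced to be in $H_{22}$ rather than in $A_2$) places $R$ inside $V_2 \cap W_1$, so the hypothesis of Lemma~\ref{lem:1a-H'_1H_22} is satisfied. Invoking the final clause of that lemma, which applies precisely in our situation since $Q \cup H'_1$ is $K$-incompressible, conclusion $(3)$ of that lemma is ruled out, and we obtain either that $H_2$ is weakly $K$-reducible or that $H_1$ can be isotoped in $(M,K)$ so as to meet $H_2$ in a single $K$-essential loop. This gives the dichotomy stated in the lemma.

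I do not expect any substantive obstacle here: the deep work has already been carried out in the preceding lemmas, and the present argument is essentially a case analysis of the $K$-$\bdd$-compressing disk furnished by Lemma~3.1 of~\cite{GH2}. The only point that genuinely requires care is checking that $R$ lies on the correct side of $H_1$ so as to meet the hypotheses of Lemma~\ref{lem:1a-H'_1H_22}; this sidedness is exactly what the restrictive definition of $\bdd$-compressing disk buys us by excluding incidence with $A_2$.
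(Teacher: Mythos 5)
Your proposal is correct and follows essentially the same route as the paper: extract a $K$-$\partial$-compressing disk of $Q\cup H'_1$ via Lemma 3.1 of \cite{GH2}, note that the definition of $\partial$-compressing disk forces its arc on $H_2$ into $H_{21}$ or $H_{22}$ rather than $A_2$, and then invoke Lemma \ref{lem:1a-A1incompressible} (case incident to $Q$) or the final clause of Lemma \ref{lem:1a-H'_1H_22} (case incident to $H'_1$, where $K$-incompressibility of $Q\cup H'_1$ excludes conclusion (3)). The paper's proof is just a terser version of exactly this case analysis.
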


\begin{proof}
% $A_1$ is parallel to $A_2$ in $W_2$.
 Because $Q \cup H'_1$ is $K$-incompressible in $(W_1, K)$,
it has a $K$-$\bdd$-compressing disk, say $D$.
 By the definition of $K$-$\bdd$-compressing disk,
the arc $\bdd D \cap H_2$ is contained in $H_{21}$ or $H_{22}$
rather than in $A_2$.
 Then we obtain the conclusion 
by Lemmas \ref{lem:1a-A1incompressible} and \ref{lem:1a-H'_1H_22}.
\end{proof}

 Thus, in Case (1)(A), we have
Conclusion (a), (b), (c) or (d) of Theorem \ref{thm:l=2}
by Lemmas \ref{lem:1a-A1compressible2}, 
\ref{lem:1a-QHcompressible}
and \ref{lem:1a-QHincompressible}.

%%%%%%%%%%%%%%%%%%%%%%%%%%%%%%%%%%%%%%%%%%%%%%%%%%%

\section{Case (1)(B)}\label{sect:1B}

 In this section, we consider the case
where the intersection loops $H_1 \cap H_2$ are
of the pattern (1) in $H_1$
and of the pattern (B) in $H_2$ in Figure \ref{fig:l=2}.
 In Case (B),
for $i=1$ and $2$
each of the loops $l_1$ and $l_2$ of $H_1 \cap H_2$
is non-separating in $H_2$,
and they cobound an annulus, say $A_2$.
 The complementary region $H'_2 = \text{cl}\,(H_2 - A_2)$
is a torus with two holes.
 Let $Q, A_1, H'_1$ be as in the previous section.
 In particular, $l_1 = \bdd Q$ and $l_2 = \bdd H'_1$.
 We may assume, without loss of generality,
that $A_2$ is contained in $V_1$,
and $H'_2$ in $V_2$.
 See Figure \ref{fig:l=2}. 

 By Lemma 3.1 in \cite{GH2},
%\ref{lem:SurfaceIn20}, 
$Q \cup H'_1$ is $K$-compressible or $K$-$\bdd$-compressible
in $(W_1, K)$.
% It is well known
%that an incompressible and $\bdd$-incompressible
%compact orientable $2$-manifold
%properly embedded in a handlebody
%is a disjoint union of $2$-spheres and disks.
% Hence the annulus 
 $A_1$ is compressible or $\bdd$-compressible in the handlebody $W_2$.
 When $A_1$ is $\bdd$-compressible,
either $A_1$ is parallel to $A_2$ in $W_2$,
or $A_1$ has a $\bdd$-compressing disk
that is also a $t_2$-$\bdd$-compressing disk of $H'_2$
in $(V_2, t_2)$.

\begin{lemma}\label{lem:1b-incompressible2}
 Suppose that $A_1$ is incompressible
and not parallel to $A_2$ in $W_2$.
 Then $H_2$ is weakly $K$-reducible.
\end{lemma}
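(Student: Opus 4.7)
The plan is to construct a weakly $K$-reducing pair of disks $(D_1,D_2)$ for the $(2,0)$-splitting $H_2$. Since $A_1$ is incompressible in the handlebody $W_2$, it is $\bdd$-compressible; from the dichotomy recalled just before the lemma, because $A_1$ is not parallel to $A_2$, I may choose a $\bdd$-compressing disk $D$ of $A_1$ in $W_2$ that is simultaneously a $t_2$-$\bdd$-compressing disk of $H'_2$ in $(V_2,t_2)$. Thus $D\subset V_2\cap W_2$, $D\cap K=\emptyset$, and $\bdd D=\alpha\cup\beta$ with $\alpha\subset A_1$ a spanning arc and $\beta\subset H'_2$ an essential arc joining $l_1$ to $l_2$.

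For the $W_2$-side disk, I $\bdd$-compress $A_1$ along $D$ to obtain a disk $D_2\subset W_2$, disjoint from $K$, whose boundary $\bdd D_2\subset H_2$ is a loop built from one arc of $l_1$, one arc of $l_2$, and two parallel copies of $\beta$. Since $l_1$ and $l_2$ cobound only the annulus $A_2$ in $H_2$, boundary-parallelism of $D_2$ in $W_2$ would force $A_1$ to be parallel to $A_2$, contradicting the hypothesis; hence $D_2$ is an essential $K$-compressing disk of $H_2$ in $W_2$.

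For the $W_1$-side disk, Lemma 3.1 of \cite{GH2} gives that $Q\cup H'_1$ is $K$-compressible or $K$-$\bdd$-compressible in $(W_1,K)$. Performing the corresponding compression or $\bdd$-compression on a copy of $Q$ or $H'_1$, as in the proofs of Lemmas \ref{lem:1a-A1compressible} and \ref{lem:1a-H'_1H_22}, yields a $K$-compressing or meridionally compressing disk $D_1$ of $H_2$ in $(W_1,K)$ whose boundary can be arranged as a loop parallel to $l_1$ or $l_2$ in $H_2$. Isotoping $\bdd D_1$ into $\inter A_2$ as a parallel push-off of $l_1$ or $l_2$, I obtain $\bdd D_1\subset \inter A_2$; since the parts of $\bdd D_2$ in $A_2$ are contained in $\bdd A_2=l_1\cup l_2$, and the $\beta$-copies of $\bdd D_2$ lie in $H'_2$ disjoint from $A_2$, I conclude $\bdd D_1\cap\bdd D_2=\emptyset$. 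Hence $(D_1,D_2)$ exhibits $H_2$ as weakly $K$-reducible.

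The step I expect to be most delicate is ensuring on the $W_1$-side that $\bdd D_1$ can indeed be arranged as a push-off of $l_1$ or $l_2$ inside $A_2$; this requires a brief case analysis on the type of disk produced by Lemma 3.1 of \cite{GH2} (compressing disk of $H'_1$, meridionally compressing disk of $Q$, or a $K$-$\bdd$-compressing disk incident to $Q$ or $H'_1$), but each subcase is amenable to the same kind of surgery as in Section 3, and the analysis here is simplified by the fact that the hypothesis has already excluded the only candidate parallelism for $A_1$ in $W_2$.
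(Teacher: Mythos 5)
Your construction of the $W_2$-side disk $D_2$ by $\bdd$-compressing $A_1$ along a disk whose arc lies in $H'_2$ is exactly the paper's first step. The gap is in your $W_1$-side disk. You apply Lemma 3.1 of \cite{GH2} to $Q \cup H'_1$ and claim that the resulting compression or $\bdd$-compression always yields a disk $D_1$ whose boundary can be pushed into $\inter A_2$ as a parallel copy of $l_1$ or $l_2$. This fails in the $K$-$\bdd$-compressible subcases. If the $K$-$\bdd$-compressing disk is incident to $H'_1$ (its arc on $H_2$ necessarily lying in $H'_2$), then $\bdd$-compressing the once-punctured torus $H'_1$ produces an \emph{annulus}, not a disk, and no $D_1$ is obtained at all; handling that annulus is precisely the lengthy analysis of Lemma \ref{lem:1b-H'_1H'_2}, which moreover can terminate in semi-stabilization rather than weak $K$-reducibility. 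If the disk is incident to $Q$ with its arc $b$ in $H'_2$, the meridionally compressing disks you obtain have boundaries running over two copies of $b$ inside $H'_2$; these are not isotopic into $A_2$ and need not be disjoint from $\bdd D_2$, whose boundary also runs through $H'_2$ along the arc $\beta$. Your own closing remark flags this step as ``delicate,'' but it is not a routine case analysis: the hypothesis that $A_1$ is not parallel to $A_2$ is never used in your $W_1$-side argument, and without it the clean conclusion is simply not available by this route.

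The paper sidesteps all of this by instead applying Lemma 2.4 of \cite{GH2} to the annulus $A_2$ in $(V_1, t_1)$. A $t_1$-compressing disk of $A_2$ lying in $W_1$ is automatically a $K$-compressing disk of $H_2$ with boundary in $\inter A_2$, hence disjoint from $\bdd D_2 \subset H'_2$; a $t_1$-compressing disk in $W_2$ would compress $A_1$, contradicting incompressibility; and a $t_1$-$\bdd$-compressing disk cannot lie in $W_1$ (its arc on $H_1$ would have to join $\bdd Q$ to $\bdd H'_1$ within a single component of $Q \cup H'_1$), so it lies in $W_2$ and forces $A_1$ parallel to $A_2$, contradicting the hypothesis. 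This is where both hypotheses of the lemma are actually consumed. To repair your proof you should replace the appeal to Lemma 3.1 on $Q \cup H'_1$ with this application of Lemma 2.4 to $A_2$.
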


\begin{proof}
 Since the annulus $A_1$ is incompressible
and not parallel to $A_2$ in $W_2$,
it has a $\bdd$-compressing disk $D$
such that the arc $\bdd D \cap H_2$ is contained
in $H'_2$.
 $\bdd$-compressing a copy of $A_1$ along $D$,
we obtain a compressing disk $D_2$ of $H'_2$ in $W_2$.

 The annulus $A_2$ is $t_1$-compressible or $t_1$-$\bdd$-compressible
in $(V_1, t_1)$ by Lemma 2.4 in \cite{GH2}.
 First we consider the former case.
 Let $R$ be a $t_1$-compressing disk of $A_2$.
 If $R$ is contained in $W_1$,
then $R$ and $D_2$ together show
that $H_2$ is weakly $K$-reducible.
 If $R$ is contained in $W_2$,
then by compressing a copy of $A_2$ along $R$
we obtain a compressing disk of $A_1$ in $W_2$.
 This contradicts our assumption.

 Hence we may assume
that $A_2$ has a $t_1$-$\bdd$-compressing disk $C$ in $(V_1, t_1)$.
 This disk $C$ is not contained in $W_1$
since the two boundary loops $\bdd A_2$
are contained in distinct components of $H_1 \cap W_1 = Q \cup H'_1$.
 Then it is contained in $W_2$,
and incident to $A_1$.
 Hence the annuli $A_1$ and $A_2$ are parallel in $W_2$.
 This again contradicts our assumption.
\end{proof}

 In other cases, the arguments are similar to those in the previous section.

\begin{lemma}\label{lem:1b-A1compressible}
% We consider case (1)(b).
 Suppose that $A_1$ is compressible in $W_2$,
and that $W_1$ contains either
$(0)$ a $t_1$-compressing disk of $A_2$ in $(V_1, t_1)$, 
$(1)$ a $t_2$-compressing disk of $H'_2$ in $(V_2, t_2)$,
$(2)$ a $t_2$-$\bdd$-compressing disk of $H'_2$
incident to $Q$ in $(V_2, t_2)$,
$(3)$ a $K$-compressing disk of $Q \cup H'_1$ in $(W_1, K)$
or 
$(4)$ a $K$-$\bdd$-compressing disk of $Q \cup H'_1$ 
incident to $Q$ in $(W_1, K)$.
 Then $H_2$ is weakly $K$-reducible.
\end{lemma}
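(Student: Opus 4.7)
The proof should closely parallel that of Lemma~\ref{lem:1a-A1compressible}. My plan is first to compress $A_1$ in $W_2$; this produces a disk $D_2\subset W_2$ bounded by $l_1$ (one of the two boundary components of $A_1$). The remainder is to produce, in each of the five cases, a $K$-compressing or meridionally compressing disk $D_1$ of $H_2$ inside $W_1$ whose boundary lies on $H_2$ disjoint from $l_1$. The pair $(D_1, D_2)$ will then witness weak $K$-reducibility of $H_2$.

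In Case $(0)$, the hypothesized $t_1$-compressing disk of $A_2$ in $V_1\cap W_1$ already serves as $D_1$: it is disjoint from $K$, its boundary is a core loop of $A_2\subset H_2$, and this core is disjoint from $l_1=\bdd A_2$. In Case $(1)$, the $t_2$-compressing disk of $H'_2$ in $V_2\cap W_1$ is likewise a $K$-compressing disk of $H_2$ in $W_1$, with boundary an essential simple closed curve in $H'_2$, hence disjoint from $\bdd H'_2 = l_1\cup l_2$.

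For Cases $(2)$, $(3)$ and $(4)$ I would follow the same two-step trick as in Lemma~\ref{lem:1a-A1compressible}. In Case $(2)$ the $t_2$-$\bdd$-compressing disk of $H'_2$ incident to $Q$ also qualifies as a $K$-$\bdd$-compressing disk of $Q$, by the paper's convention for $\bdd$-compressing disks (Definition 2.3 in \cite{GH2}). A compression or $\bdd$-compression of a copy of $Q$ or $H'_1$ along the given disk (in Cases $(2)$, $(3)$, or $(4)$, respectively) then yields, inside $W_1$, a disk $D_1$ meeting $K$ in at most one point, whose boundary lies in $H'_2\cup A_2\cup l_1$. A small isotopy displaces $\bdd D_1$ off of $l_1$ into $H'_2$ or $A_2$, so that $\bdd D_1\cap l_1=\emptyset$ on $H_2$; thus $D_1$ and $D_2$ together show that $H_2$ is weakly $K$-reducible.

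The main technical point, and the place where care is needed, is the local check in Cases $(2)$--$(4)$ that after the compression or $\bdd$-compression and the push-off, the produced curve $\bdd D_1$ is essential in $H_2$ in the appropriate $K$-sense (not bounding a disk or a once-$K$-punctured disk in the wrong way) and is genuinely disjoint from $l_1$. These are routine verifications of the same kind already implicit in the proof of Lemma~\ref{lem:1a-A1compressible}, so no new ideas should be required.
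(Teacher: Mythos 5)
Your proposal follows the same route as the paper's proof: compress $A_1$ to obtain $D_2\subset W_2$ bounded by $l_1$, observe that in Case $(2)$ the given disk is also a $K$-$\bdd$-compressing disk of $Q$, and in Cases $(2)$--$(4)$ perform the compression or $\bdd$-compression on a copy of $Q$ or $H'_1$ to produce a $K$-compressing or meridionally compressing disk $D_1$ of $H'_2$ or $A_2$ in $(W_1,K)$, which together with $D_2$ exhibits weak $K$-reducibility. The additional disjointness and essentiality checks you flag are exactly the routine verifications the paper leaves implicit, so the argument is correct and essentially identical.
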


\begin{proof}
 Compressing $A_1$,
we obtain a disk $D_2\,(\subset W_2)$ bounded by $l_1$.

 In Case (2), 
the $t_2$-$\bdd$-compressing disk is also 
a $K$-$\bdd$-compressing disk of $Q$
by the definition of $\bdd$-compressing disk.
 Hence, in Cases (2), (3) and (4),
we obtain a $K$-compressing or meridionally compressing disk
of $H'_2$ in $(W_1, K)$
by performing a compressing or $\bdd$-compressing operation
on a copy of $Q$ or $H'_1$.
 Hence, in all cases, there is 
a $K$-compressing or meridionally compressing disk $D_1$ 
of $H'_2$ or $A_2$ in $(W_1, K)$.
 Then the disks $D_1$ and $D_2$ together show
that $H_2$ is weakly $K$-reducible.
\end{proof}

\begin{lemma}\label{lem:1b-A1parallel}
 Suppose that $A_1$ is parallel to $A_2$ in $W_2$,
and that $W_1$ contains either
$(1)$ a $t_2$-compressing disk of $H'_2$,
$(2)$ a $t_2$-$\bdd$-compressing disk of $H'_2$
incident to $Q$,
$(3)$$'$ a $K$-compressing disk of $Q \cup H'_1$ in $V_2$
or 
$(4)$ a $K$-$\bdd$-compressing disk of $Q \cup H'_1$ incident to $Q$.
 Then $H_2$ is weakly $K$-reducible.
\end{lemma}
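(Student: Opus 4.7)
The plan is to follow the template of Lemma \ref{lem:1a-A1incompressible}, except that the parallelism $A_1 \sim A_2$ in $W_2$ is now part of the hypothesis rather than something to deduce. First I will produce a single $K$-compressing or meridionally compressing disk of $H'_2$ lying in $V_2 \cap W_1$; then I will use the parallelism to isotope $H_1$ into the interior of $W_1$ off this disk; and finally I will invoke Proposition \ref{thm:koba}.

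For the disk extraction I will argue exactly as in the second paragraph of the proof of Lemma \ref{lem:1b-A1compressible}. In Case (1), the $t_2$-compressing disk of $H'_2$ inside $W_1$ is already $K$-compressing, since it is disjoint from $t_2$ and sits inside $V_2$, so it gives the required disk $D$ in $V_2 \cap W_1$. Case (2) is to be reduced to Case (4): by the unusual definition of $\bdd$-compressing disk from \cite{GH2}, a $t_2$-$\bdd$-compressing disk of $H'_2$ incident to $Q$ is simultaneously a $K$-$\bdd$-compressing disk of $Q$. In Cases (3)$'$ and (4), performing the indicated $K$-compression or $\bdd$-compression on a parallel copy of $Q$ or $H'_1$ in $V_2 \cap W_1$ produces the required $K$-compressing or meridionally compressing disk $D$ of $H'_2$.

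Next I will use the product region in $W_2$ realising the parallelism to isotope $H_1$ across it, pushing $A_1$ just past $A_2$ into $V_1 \cap W_1$; the rest of $H_1$ (namely $Q \cup H'_1$) then follows into $\operatorname{int} W_1$ after a small perturbation. Since $D$ sits inside $V_2 \cap W_1$ while the isotopy takes place in a neighbourhood of $A_2 \subset V_1$, a further general-position adjustment makes the new $H_1$ disjoint from $D$. Proposition \ref{thm:koba}, applied with $H_1 \subset \operatorname{int} W_1$ and the disk $D$ of $H_2$ in $(W_1, K \cap W_1)$, then yields that $H_2$ is weakly $K$-reducible; the exceptional conclusion ($M = S^3$ with $K$ trivial) is absorbed into this conclusion as in Lemma \ref{lem:1a-A1incompressible}.

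The only point that needs any care is ensuring, in Cases (2) and (4), that the $\bdd$-compression of a copy of $Q$ produces a loop genuinely on $H'_2$ and not merely one that bounds a sub-disk of $A_2$; this is exactly what the clause "incident to $Q$" in the hypothesis buys us, playing the same role as in Lemma \ref{lem:1b-A1compressible}. I therefore expect no substantive new obstacle beyond carefully reusing the two earlier arguments.
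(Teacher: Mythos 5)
Your proposal is correct and follows the paper's own proof essentially verbatim: extract a $K$-compressing or meridionally compressing disk $D$ of $H'_2$ in $V_2\cap W_1$ exactly as in the second paragraph of the proof of Lemma \ref{lem:1b-A1compressible} (with the "incident to $Q$" clause forcing the relevant boundary arc into $H'_2$ rather than $A_2$), then isotope $H_1$ along the parallelism between $A_1$ and $A_2$ into $\mathrm{int}\,W_1$ disjoint from $D$, and apply Proposition \ref{thm:koba}. No substantive differences from the paper's argument.
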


\begin{proof}
 In Case (4),
the $K$-$\bdd$-compressing disk is incident to $H'_2$ rather than $A_2$
by the definition of $\bdd$-compressing disk.
 Thus, in all cases,
we can assume that $V_2 \cap W_1$ contains
a $K$-compressing or meridionally compressing disk $D$ of $H'_2$
in $(W_1, K)$
by a similar argument to in the second paragraph 
in the proof of Lemma \ref{lem:1b-A1compressible}.

 Since $A_1$ is parallel to $A_2$ in $W_2$,
we can isotope $H_1$ along the parallelism
so that $H_1$ is contained in int\,$W_1$ and is disjoint from $D$.
 Then Proposition \ref{thm:koba} shows
that $H_2$ is weakly $K$-reducible.
\end{proof}

 We call a $(2,0)$-splitting
$(M, K) = (W_1, K) \cup_{H_2} (W_2, \emptyset)$
{\it $K$-stabilized\/}
if there is a $K$-compressing disk of $H_2$ in $(W_i, K \cap W_i)$
for $i=1$ and $2$
such that $\bdd D_1$ and $\bdd D_2$ intersect each other
transversely in a single point.
 A $K$-stabilized $(2,0)$-splitting is $K$-reducible.
 See, for example, Lemma 4.1 in \cite{GHY}.
 (See Definition 4.1 in \cite{GH2} for the definition of $K$-reducibility.)

\begin{lemma}\label{lem:1b-H'_1H'_2}
 Suppose that $A_1$ is compressible 
or parallel to $A_2$ in $W_2$.
 Assume that $V_2 \cap W_1$ contains
a $t_2$-$\bdd$-compressing disk of $H'_2$ incident to $H'_1$
or a $K$-$\bdd$-compressing disk of $Q \cup H'_1$ incident to $H'_1$.
 Then one of the following conditions holds.
\begin{enumerate}
\renewcommand{\labelenumi}{(\theenumi)}
\item
%(1)
 $H_2$ is weakly $K$-reducible.
%\item
%(2)
% We can isotope $H_1$ in $(M,K)$
%so that $H_1 \cap H_2$ is a single loop
%which is $K$-essential both in $H_1$ and in $H_2$.
\item
%(3)
 $H_2$ is semi-stabilized,
and $K$ is a torus knot or a satellite knot.
\end{enumerate}
 Moreover,
if either $A_1$ is compressible in $W_2$
or $Q \cup H'_1$ is $K$-incompressible, 
then the conclusion $(1)$ holds.
\end{lemma}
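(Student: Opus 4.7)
The plan is to adapt the proof of Lemma~\ref{lem:1a-H'_1H_22} to the Case (1)(B) setting, where $H'_2$ is a connected torus-with-two-holes rather than two disjoint tori-with-one-hole. First, isotope $H_1$ along the $\bdd$-compressing disk $D$ slightly beyond the arc $\alpha = \bdd D \cap H'_2$. Because $D$ is incident to $H'_1$, the arc $\beta = \bdd D \cap H_1$ lies in $H'_1$ with both endpoints on $l_2$, so $\alpha$ is an essential arc in $H'_2$ with both endpoints on $l_2$. Every essential arc on the torus-with-one-hole $H'_1$ is non-separating, hence the isotopy deforms $H'_1$ into an annulus $A$, and simultaneously deforms $A_1$ into a disk with two holes $P$. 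Setting $P' = \text{cl}\,(H_2 - N)$, where $N$ is the annular region of $H'_2$ swept by $\alpha$ during the isotopy, $P$ is parallel to $P'$ in $W_2$ whenever $A_1$ was $\bdd$-compressible in $W_2$, in particular when $A_1$ is parallel to $A_2$. Now Lemma~3.1 in \cite{GH2} gives that the modified $Q \cup A$ is $K$-compressible or $K$-$\bdd$-compressible in $(W_1, K)$.

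In the $K$-$\bdd$-compressible case (Case~(a)) with disk $R$, I would analyze the position of $\bdd R \cap H_2$. If it lies in $H_2 - N$, then $R$ also $\bdd$-compresses $Q \cup H'_1$ before the isotopy, and Lemmas~\ref{lem:1b-A1compressible} and~\ref{lem:1b-A1parallel} yield weak $K$-reducibility. Otherwise $R$ is incident to $N$; $R$ cannot $\bdd$-compress $A$ (this would yield a disk in $W_1-K$ with boundary parallel to $l_1$, contradicting $K$-incompressibility of $Q$), so it $\bdd$-compresses $Q$ to a once-punctured disk $R_1$ bounded by a loop parallel to $l_1$. A compressing disk of $A_1$ (if any) together with $R_1$ gives weak $K$-reducibility; otherwise $A_1$ is parallel to $A_2$, so $P$ is parallel to $P'$ in $W_2$, and Lemma~2.4 in \cite{GH2} applied to $H'_2$ produces a $t_2$-compressing or $t_2$-$\bdd$-compressing disk. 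Locating this disk relative to $W_1, W_2$ via Lemma~\ref{lem:1b-A1parallel} and pairing with $R_1$ gives either weak $K$-reducibility or that $H_2$ is meridionally stabilized, which is weakly $K$-reducible by Proposition~4.7 of \cite{GH2}.

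In the $K$-compressible case (Case~(b)) with disk $E$, observe that $E$ also $K$-compresses $Q \cup H'_1$ before the isotopy, so if $A_1$ is compressible then Lemma~\ref{lem:1b-A1compressible} gives conclusion $(1)$. Otherwise $A_1$ is parallel to $A_2$ in $W_2$ and Lemma~\ref{lem:1b-A1parallel} lets me take $E \subset W_1 \cap V_1$; compressing produces a $K$-compressing disk $E'$ of $A_2$ in $(W_1, K)$ bounded by $l_1$. Lemma~2.4 in \cite{GH2} applied to $H'_2$ yields a $t_2$-compressing or $t_2$-$\bdd$-compressing disk $Z$. Running through the subcases — $Z$ in $W_1$ handled by Lemma~\ref{lem:1b-A1parallel}; $Z$ in $W_2$ extended via the parallelism of $P$ with $P'$ to an essential disk in $W_2$ and paired with $E'$ — I obtain either weak $K$-reducibility (when the extended boundary is disjoint from $\bdd E'$) or that $H_2$ is semi-stabilized (when it meets $\bdd E'$ in exactly two points), whence Proposition~\ref{prop:semistabilized}, combined with the hypothesis $M \neq S^2 \times S^1$, forces $K$ to be a torus or satellite knot.

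The main subtlety is the absence here of the ``single loop intersection'' conclusion of Lemma~\ref{lem:1a-H'_1H_22}. In the Case~(1)(A) argument, an arc $\bdd R \cap H_2$ lying on an annular piece of $H_2$ produced a parallelism reducing intersections to one loop, but in the present situation the non-separating nature of $l_1, l_2$ in $H_2$ makes the analogous parallelism either unavailable or rerouted into a compressibility argument; this must be verified carefully through each subcase. Finally, for the ``moreover'' clause, inspection shows that the semi-stabilized conclusion arises only within Case~(b), where one needs simultaneously that $A_1$ is parallel to $A_2$ in $W_2$ (hence $A_1$ is incompressible) and $Q \cup H'_1$ is $K$-compressible in $(W_1, K)$. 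Excluding either of these hypotheses blocks this route and leaves only the weak $K$-reducibility conclusion $(1)$.
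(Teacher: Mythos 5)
Your overall strategy coincides with the paper's: isotope $H_1$ along $D$, apply Lemma 3.1 of \cite{GH2} to the deformed surface $Q\cup A$, split into the $K$-$\bdd$-compressible and $K$-compressible cases, and close with Proposition \ref{thm:koba} and Proposition \ref{prop:semistabilized}; your Case (b) outline and your analysis of the ``moreover'' clause essentially match the paper. However, your Case (a) contains a circularity. When the arc $\bdd R\cap H_2$ lies in the part of $H_2$ away from the band $N$ and the disk $R$ is incident to the annulus $A$ (the deformed $H'_1$), undoing the isotopy turns $R$ into a $K$-$\bdd$-compressing disk of $Q\cup H'_1$ \emph{incident to $H'_1$} --- which is precisely the hypothesis of the lemma you are proving, and is not among the configurations covered by Lemmas \ref{lem:1b-A1compressible} and \ref{lem:1b-A1parallel} (those require the disk to be a compressing disk or to be incident to $Q$). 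So ``Lemmas 4.3 and 4.4 yield weak $K$-reducibility'' does not go through there; the paper instead $\bdd$-compresses $Q\cup A$ along $R$ to obtain an essential disk $R_1$ in $W_1$ meeting $K$ in at most one point, and then either pushes $H_1$ into $\mathrm{int}\,W_1$ along the parallelism and invokes Proposition \ref{thm:koba}, or pairs $R_1$ with the compressing disk of $A_1$. The same circular citation recurs in your Case (b) where you dispose of ``$Z$ in $W_1$'' by Lemma \ref{lem:1b-A1parallel}.

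Second, you never confront the dichotomy that genuinely distinguishes Case (1)(B) from (1)(A): the arc $\bdd D\cap H'_2$ may be separating or non-separating in the torus with two holes $H'_2$, so the post-isotopy complement $H''_2$ is either a disk with two holes $P_{22}$ or a disjoint union $A'\cup H^*_2$ of an annulus and a once-holed torus. This matters concretely: when $R$ is incident to $Q$ with its arc in $P_{21}$, $\bdd$-compressing $Q$ along $R$ produces \emph{two} disks $R_3,R_4$, each meeting $K$ once and bounded by the two loops of $\bdd A$ --- not, as you assert, a single once-punctured disk bounded by a loop parallel to $l_1$. In the subcase $H''_2=A'\cup H^*_2$ one of $R_3,R_4$ would be separating in $W_1$, contradicting that it meets $K$ once; this is what forces $H''_2=P_{22}$ and lets the remaining parallelism and stabilization arguments (and the exclusion of $Z$ incident to $A'$ or $H^*_2$ in Case (b)) go through. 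Without this dichotomy and the two-disk bookkeeping, several of your subcases do not close.
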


\begin{proof}
 We isotope $H_1$ along the $\bdd$-compressing disk $D$
slightly beyond the arc $\bdd D \cap H'_2$.
 Then, 
by the definition of $\bdd$-compressing disk,
$H'_1$ is deformed into an annulus $A$,
each of the boundary loops $\bdd A$ is essential in $H'_2$.
 The annulus $A_1$ is deformed to a disk with two holes $P$.
 The annulus $A_2$ is deformed to a disk with two holes $P_{21}$.
 The torus with two holes $H'_2$ is deformed to a $2$-manifold $H''_2$,
which is either a disk with two holes $P_{22}$,
or a disjoint union of an annulus $A'$
and a torus with one hole $H^*_2$.
 Note that a component of $\bdd A'$ is $l_1 = \bdd Q$.
 See Figure \ref{fig:11-1}.
 Then $P$ is parallel to $P_{21}$
if $A_1$ is parallel to $A_2$ in $W_2$ before the isotopy.
 By Lemma 3.1 in \cite{GH2}, 
$Q \cup A$ is $K$-compressible or $K$-$\bdd$-compressible
in $(W_1, K)$.

\begin{figure}[htbp]
\centering
\includegraphics[width=.7\textwidth]{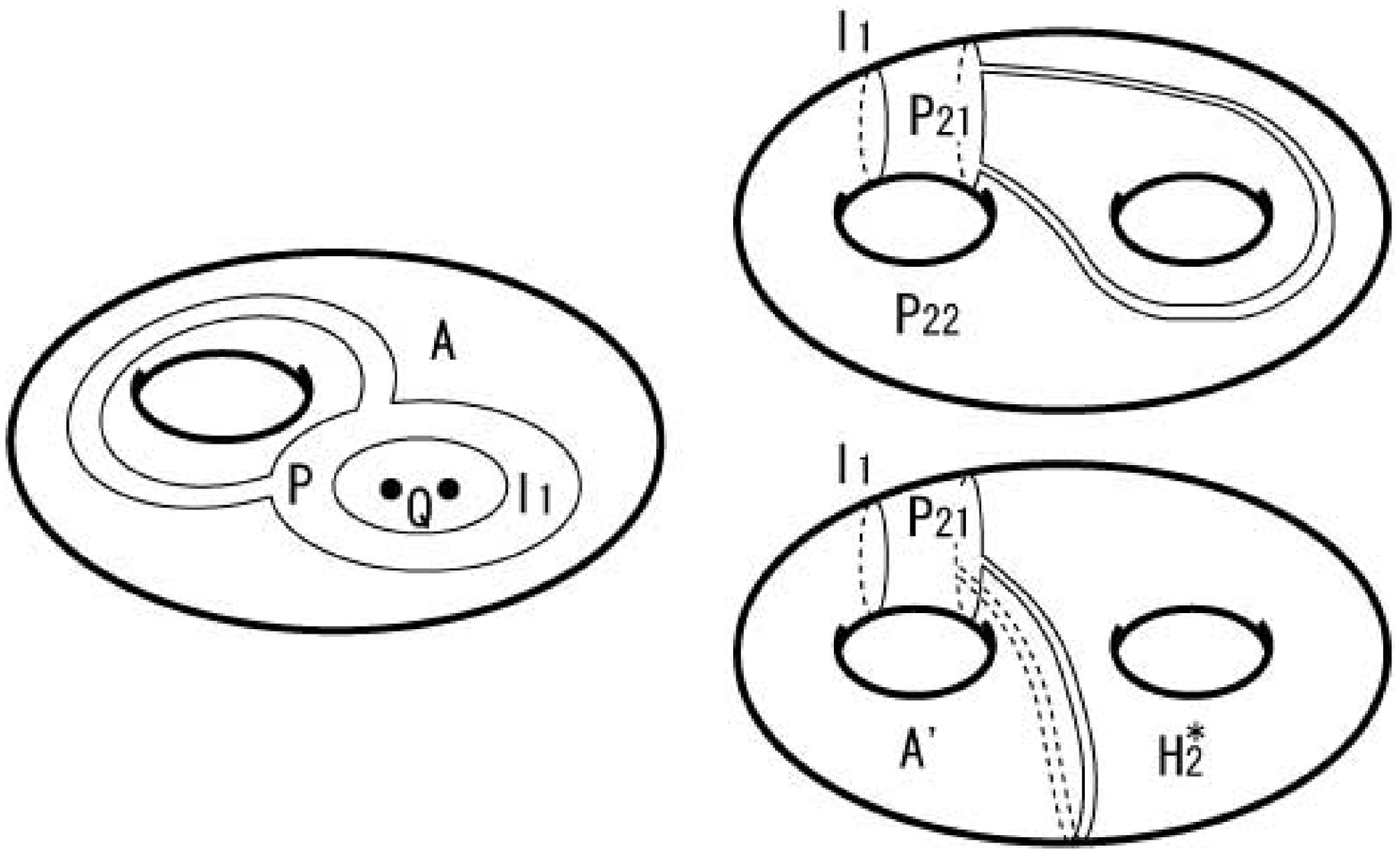}
\caption{}
\label{fig:11-1}
\end{figure}

{\it Case $(a)$.}
 First, we assume
that $Q \cup A$ is $K$-incompressible
and has a $K$-$\bdd$-compressing disk $R$ in $(W_1, K)$.
 (This holds if $Q \cup H'_1$ is $K$-incompressible before the isotopy.)
 The arc $\bdd R \cap H_2$ is not contained in the annulus $A'$
because of the definition of $\bdd$-compressing disk.
 The arc $\bdd R \cap H_2$ is not contained in $H^*_2$
since it contains only one component of $\bdd A$.
 Thus the arc $\bdd R \cap H_2$ is contained
in a disk with two holes $P_{21}$ or $P_{22}$.

 If the arc $\bdd R \cap H_2$ is contained in $P_{22}$,
then by $\bdd$-compressing $Q \cup A$ along $R$
we obtain an essential disk $R_1$ in $W_1$
such that it intersects $K$ in at most one point.
 When $A_1$ is parallel to $A_2$ in $W_2$ before the isotopy,
$P$ is parallel to $P_{21}$ in $W_2$ after the isotopy, 
and hence we can isotope $H_1$ in int\,$W_1$
so that it is disjoint from $R_1$.
 Then Proposition \ref{thm:koba} shows
that $H_2$ is weakly $K$-reducible.
 When $A_1$ is compressible in $W_2$,
a compressing operation on $A_1$ yields a disk bounded by $l_1$, 
which shows together with $R_1$
that $H_2$ is weakly $K$-reducible.

 Hence we may assume
that the arc $\bdd R \cap H_2$ is contained in $P_{21}$.
 If $R$ is incident to $A$,
then performing a $K$-$\bdd$-compressing operation on $A$ along $R$,
we obtain a disk $R_2$, disjoint from $K$.
 Since the arc $\bdd R \cap H_2$ is an essential arc in $P_{21}$
and since it connects the two loops $\bdd A'$,
the boundary loop $\bdd R_2$ is parallel to $l_1 = \bdd Q$.
 This implies that $Q$ is $K$-compressible in $(W_1, K)$,
%and that $Q \cup H'_1$ is $K$-compressible before the isotopy,
contradicting our assumption.
 Hence $R$ is incident to $Q$.
 Compressing $Q$ along $R$,
we obtain two disks $R_3$ and $R_4$,
each of which intersects $K$ transversely in a single point.
 We can isotope $R_3$ and $R_4$ in $(W_1, K)$
so that they are bounded by the two loops $\bdd A$.
 When $H''_2 = A' \cup H^*_2$,
one of the disks $R_3$ and $R_4$ is bounded by $\bdd H^*_2$,
and hence is separating in $W_1$.
 This contradicts that each of $R_3$ and $R_4$ 
intersects $K$ transversely in a single point.
 Hence we can assume that $H''_2 = P_{22}$.
 When $A_1$ is compressible in $W_2$,
by compressing $A_1$, 
we obtain an essential disk bounded by $l_1$ in $W_2$. 
 This disk and $R_3$ show
that $H_2$ is weakly $K$-reducible.
 We consider the case where $A_1$ is parallel to $A_2$ in $W_2$ 
before the isotopy.
 In this case, $P$ is parallel to $P_{21}$ in $W_2$ after the isotopy.
 By Lemma 2.4 in \cite{GH2}, 
$P_{22}$ is $t_2$-compressible or $t_2$-$\bdd$-compressible
in $(V_2, t_2)$.
 First we consider the former case.
 If the $t_2$-compressing disk of $P_{22}$ is in $W_1$,
then the same argument as in the third paragraph in this proof
shows that $H_2$ is weakly $K$-reducible.
 If the $t_2$-compressing disk of $P_{22}$ is in $W_2$,
then this disk together with $R_3$ shows 
that $H_2$ is weakly $K$-reducible.

 We consider the latter case,
where $P_{22}$ has a $t_2$-$\bdd$-compressing disk $C$.
 If $C$ is contained in $W_1$,
then it is also a $K$-$\bdd$-compressing disk of $Q \cup A$.
 We have considered this situation
in the third paragraph of this proof.
 Hence we may assume that $C$ is contained in $W_2$.
 The loop $\bdd C$ intersects 
one of the loops $\bdd R_3$ and $\bdd R_4$, say $\bdd R_3$,
in at most one point.
 Since $P$ is parallel to $P_{21}$ in $W_2$,
and we can extend $C$ to an essential disk in $W_2$
such that its boundary loop intersects $\bdd R_3$
in at most one point.
 Hence $H_2$ is weakly $K$-reducible or meridionally stabilized.
 Also in the latter case,
$H_2$ is weakly $K$-reducible 
by Proposition 4.7 in \cite{GH2}.
%\ref{prop:20weak}.

{\it Case $(b)$.}
 We consider the case 
where $Q \cup A$ is $K$-compressible in $(W_1, K)$.
 Then $Q \cup H'_1$ is $K$-compressible before the isotopy.
 If the compressing disk is in $V_2$,
then Lemmas \ref{lem:1b-A1compressible} and \ref{lem:1b-A1parallel} 
show that $H_2$ is weakly $K$-reducible.
 Hence we can assume that the $K$-compressing disk is in $V_1$,
and a compressing operation on $Q \cup H'_1$
yields a $K$-compressing disk $X$ of $A_2$ in $V_1 \cap W_1$.
 We can isotope so that $\bdd X = l_1$.
 When $A_1$ is compressible in $W_2$,
a compressing operation on $A_1$ yields a disk bounded by $l_2$,
which together with $X$ shows
that $H_2$ is weakly $K$-reducible.

 Hence we can assume that $A_1$ is parallel to $A_2$ in $W_2$.
 In this case, $P$ is parallel to $P_{21}$ in $W_2$.
 $H''_2$ is $t_2$-compressible or $t_2$-$\bdd$-compressible
in $(V_2, t_2)$.
 Suppose that $H''_2$ is $t_2$-compressible.
 If the $t_2$-compressing disk is in $W_2$,
then this disk and $X$ show
that $H_2$ is weakly $K$-reducible.
 If it is in $W_1$,
then $H'_2$ has a $K$-compressing disk in $(W_1, K)$ before the isotopy,
and Lemmas \ref{lem:1b-A1compressible} and \ref{lem:1b-A1parallel}
show that $H_2$ is weakly $K$-reducible.
 Hence we can assume
$H''_2$ is $t_2$-incompressible
and has a $t_2$-$\bdd$-compressing disk $Z$ in $(V_2, t_2)$.
 First suppose that $Z$ is contained in $W_1$.
 Then $Z$ is not incident to the annulus $A'$
because the two boundary loops of $\bdd A'$ are contained
in distinct components of $H_1 \cap W_1 = Q \cup A$
separately.
 Moreover, $C$ is not incident to the torus with one hole $H^*_2$
because it contains only one component of $\bdd A$.
 Hence $H''_2 = P_{22}$.
 Performing the $K$-$\bdd$-compressing operation on $Q \cup A$
along the disk $Z$,
we obtain a disk $Z_1$
which intersects $K$ in at most one point.
 Note that $\bdd Z_1$ is essential in $H_2$
since $Z$ is a $t_2$-$\bdd$-compressing disk of $P_{22}$.
 Along the parallelism of $P$ and $P_{21}$,
we can isotope $H_1$
so that $P$ is pushed into $\text{int}\,W_1$
and that $H_1 \cap Z_1 = \emptyset$.
 Then Proposition \ref{thm:koba} shows
that $H_2$ is weakly $K$-reducible.
 This is the conclusion (1).

 Therefore, we may assume that the $t_2$-$\bdd$-compressing disk $Z$
of $H''_2$ is contained in $W_2$.
 We can extend $Z$ into an essential disk $Z'$ in $W_2$
because $P$ and $P_{21}$ are parallel in $W_2$.
 Since $\bdd Z$ intersects the loop $l_1$ at most in two points,
so does $\bdd Z'$. 
 Hence the disks $Z'$ and $X$ show 
that $H_2$ is either weakly $K$-reducible, $K$-stabilized or 
semi-stabilized. 
 In the second case,
$H_2$ is weakly $K$-reducible.
 In the last case,
we have the conclusion (3) by Proposition \ref{prop:semistabilized}.
\end{proof}

\begin{lemma}\label{lem:1b-A1compressible2}
 Suppose that $A_1$ is compressible in $W_2$.
 Then $H_2$ is weakly $K$-reducible.
\end{lemma}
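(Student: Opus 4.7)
The plan is to follow the same template used in the proof of Lemma \ref{lem:1a-A1compressible2} in Case (1)(A), since the overall setup is analogous. I begin by applying Lemma 3.1 of \cite{GH2} to the pair $(Q \cup H'_1, K)$ in $(W_1, K)$, which forces $Q \cup H'_1$ to be either $K$-compressible or $K$-$\partial$-compressible in $(W_1, K)$. These two cases will be handled separately, and in each case I expect to appeal to one of the previously established lemmas in this section.

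In the $K$-compressible case, the existence of a $K$-compressing disk of $Q \cup H'_1$ in $(W_1, K)$ immediately puts us in situation (3) of Lemma \ref{lem:1b-A1compressible} (since $A_1$ is compressible in $W_2$ by hypothesis), and that lemma concludes that $H_2$ is weakly $K$-reducible. In the $K$-$\partial$-compressible case, let $D$ be a $K$-$\partial$-compressing disk of $Q \cup H'_1$. By the convention on $\partial$-compressing disks (cf.\ Definition 2.3 in \cite{GH2}), the arc $\partial D \cap H_2$ cannot lie in $A_2$, so it lies in $H'_2$. The disk $D$ must therefore be incident to either $Q$ or $H'_1$.

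If $D$ is incident to $Q$, then situation (4) of Lemma \ref{lem:1b-A1compressible} applies, and again $H_2$ is weakly $K$-reducible. If $D$ is incident to $H'_1$, I would argue, using an innermost-disk argument with respect to intersections with $H_1$ if necessary, that $D$ can be taken to lie in $V_2 \cap W_1$: indeed, the arc $\partial D \cap H_2$ lies in $H'_2 \subset V_2$, so near this arc the disk is already on the $V_2$-side of $H'_2$, and making the interior of $D$ disjoint from $Q \cup H'_1$ keeps $D$ in $V_2 \cap W_1$. This places us in the hypothesis of Lemma \ref{lem:1b-H'_1H'_2}, and the ``moreover'' clause of that lemma (applicable because $A_1$ is compressible in $W_2$) yields conclusion (1), namely that $H_2$ is weakly $K$-reducible.

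The only step requiring any care is the reduction of the $K$-$\partial$-compressing disk into $V_2 \cap W_1$ in the last sub-case, but this is the standard innermost-disk argument and is already implicit in the structure of the previous lemmas; the rest is a routine case analysis that exactly parallels Lemma \ref{lem:1a-A1compressible2}.
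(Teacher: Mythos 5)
Your proof is correct and follows essentially the same route as the paper: reduce via Lemma 3.1 of \cite{GH2} to the $K$-compressible case (handled by case (3) of Lemma \ref{lem:1b-A1compressible}) and the $K$-$\partial$-compressible case, where the $\partial$-compressing arc cannot lie in $A_2$ and so the disk is incident to $Q$ (case (4) of Lemma \ref{lem:1b-A1compressible}) or to $H'_1$ (Lemma \ref{lem:1b-H'_1H'_2} with its ``moreover'' clause). The only cosmetic difference is that your appeal to an innermost-disk argument is unnecessary, since the interior of a $\partial$-compressing disk is already disjoint from $Q \cup H'_1$ by definition and hence the disk automatically lies in $V_2 \cap W_1$ once its arc in $H_2$ lies in $H'_2$.
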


\begin{proof}
 $Q \cup H'_1$ is $K$-compressible or $K$-$\bdd$-compressible in $(W_1, K)$ 
by Lemma 3.1 in \cite{GH2}.
 In the former case, we have the conclusion 
by Lemma \ref{lem:1b-A1compressible}.
 In the latter case, 
the $K$-$\bdd$-compressing disk is not incident to $A_2$ by the definition.
 Hence the conclusion holds 
by Lemmas \ref{lem:1b-A1compressible} and \ref{lem:1b-H'_1H'_2}.
\end{proof}

\begin{lemma}\label{lem:1b-QHcompressible}
 Suppose that $A_1$ is parallel to $A_2$ in $W_2$,
and that $Q \cup H'_1$ is $K$-compressible in $(W_1, K)$.
 Then one of the following conditions holds.
\begin{enumerate}
\renewcommand{\labelenumi}{(\theenumi)}
\item
%(1)
 The $(2,0)$-splitting $H_2$ of $(M,K)$ is weakly $K$-reducible.
\item
%(2) 
$H_2$ is semi-stabilized,
and $K$ is a torus knot or a satellite knot.
\end{enumerate}
\end{lemma}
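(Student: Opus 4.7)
The plan is to mirror the proof of Lemma~\ref{lem:1a-QHcompressible} almost verbatim, replacing the surface $H_{21} \cup H_{22}$ by $H'_2$ and substituting Lemmas~\ref{lem:1b-A1parallel} and~\ref{lem:1b-H'_1H'_2} for the roles played by Lemmas~\ref{lem:1a-A1incompressible} and~\ref{lem:1a-H'_1H_22}.

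First I would compress $Q \cup H'_1$ along the assumed $K$-compressing disk to obtain a disk $D \subset W_1$ disjoint from $K$; since $l_1$ and $l_2$ cobound the annulus $A_2$ in $V_1$, an adequate isotopy arranges $\bdd D = l_1$. Then I apply Lemma~2.4 in \cite{GH2} to the twice-holed torus $H'_2$ in the handlebody $V_2$, obtaining that $H'_2$ is $t_2$-compressible or $t_2$-$\bdd$-compressible in $(V_2, t_2)$.

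In the $t_2$-compressible case, let $R$ be a $t_2$-compressing disk of $H'_2$. If $R \subset W_1$, condition~(1) of Lemma~\ref{lem:1b-A1parallel} gives weak $K$-reducibility of $H_2$; if $R \subset W_2$, then $R$ is a $K$-compressing disk of $H_2$ in $W_2$ whose boundary lies in the interior of $H'_2$ and is therefore disjoint from $\bdd D = l_1$, so $D$ and $R$ together exhibit weak $K$-reducibility. In the $t_2$-$\bdd$-compressible case, let $R'$ be a $t_2$-$\bdd$-compressing disk of $H'_2$; the Case~(1A) argument applies without change to force $R' \subset V_2 \cap W_1$, because $H_1 \cap W_2 = A_1$ is disjoint from $K$. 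Then $\bdd R' \cap H_1$ lies in either $Q$ or $H'_1$. In the former subcase, condition~(2) of Lemma~\ref{lem:1b-A1parallel} yields weak $K$-reducibility; in the latter, Lemma~\ref{lem:1b-H'_1H'_2} delivers either weak $K$-reducibility or the semi-stabilized conclusion with $K$ a torus or satellite knot, the ``moreover'' clause of that lemma not being triggered since $A_1$ is parallel to $A_2$ (rather than compressible in $W_2$) and $Q\cup H'_1$ is $K$-compressible by hypothesis.

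The main technical obstacle will be checking, in each subcase, that the disks produced sit on the appropriate side of $H_2$ and that their boundary curves are disjoint and essential in $H_2 - K$, so that the black-box lemmas and Proposition~\ref{prop:semistabilized} can legitimately be invoked. Once those alignments are confirmed, conclusions~(1) and~(2) of the lemma follow directly from the cited results.
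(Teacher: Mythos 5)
Your overall architecture matches the paper's proof, but there is one genuine gap: the claim that ``the Case (1A) argument applies without change to force $R' \subset V_2 \cap W_1$.'' That exclusion is \emph{not} valid in Case (1)(B). In Case (1)(A) the surface $H_2 \cap V_2 = H_{21} \cup H_{22}$ is a disjoint union of two once-holed tori, each with a single boundary loop; hence the arc $\bdd R' \cap (H_{21}\cup H_{22})$ has both endpoints on the same loop $l_i$, so the complementary arc $\bdd R' \cap H_1$ would have both endpoints on one boundary component of the annulus $A_1$ and be inessential there --- that is what rules out $R' \subset W_2$. In Case (1)(B) the surface $H'_2$ is a \emph{connected} twice-holed torus with $\bdd H'_2 = l_1 \cup l_2$, so the arc $\bdd R' \cap H'_2$ may join $l_1$ to $l_2$, and then $\bdd R' \cap A_1$ is an essential spanning arc of the annulus $A_1$; the definition of $t_2$-$\bdd$-compressing disk does not exclude $R' \subset W_2$. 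The fact that $A_1$ is disjoint from $K$ is not enough here.

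The paper handles this extra subcase explicitly: when $R' \subset W_2$, the loop $\bdd R'$ meets each component of $\bdd A_1$ once, and since $A_1$ is parallel to $A_2$ in $W_2$ one extends $R'$ to an essential disk in $W_2$ whose boundary meets $l_1 = \bdd D_1$ transversely in a single point; together with the disk $D_1$ (obtained from the compression and bounded by $l_1$) this shows $H_2$ is $K$-stabilized, hence $K$-reducible and in particular weakly $K$-reducible, giving conclusion (1). You need to add this subcase (or an equivalent argument) for the proof to close. A secondary, smaller omission: the paper first disposes of the possibility that the $K$-compressing disk of $Q \cup H'_1$ lies in $V_2$ (condition $(3)'$ of Lemma \ref{lem:1b-A1parallel}) before compressing to produce $D_1 \subset V_1 \cap W_1$ with $\bdd D_1 = l_1$; your write-up should make that case split explicit so that $D_1$ is genuinely a $K$-compressing disk of $A_2$ positioned to interact with $l_1$ as required in the $K$-stabilized subcase.
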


\begin{proof}
 Let $D$ be a $K$-compressing disk of $Q \cup H'_1$ in $(W_1, K)$.
 If $D$ is in $V_2$,
then Lemmas \ref{lem:1b-A1compressible} and \ref{lem:1b-A1parallel} 
show that $H_2$ is weakly $K$-reducible.
 Thus we may assume that $D$ is in $V_1$.
 Compressing a copy of $Q$ or $H'_1$ along $D$,
we obtain a disk $D_1$ which is disjoint from $K$.
 We can isotope $D_1$ in $(W_1, K)$
so that $D_1 \cap (Q \cup H'_1) = \bdd D_1 \cap \bdd Q = l_1$.
 Then $D_1$ forms a $K$-compressing disk of $A_2$.

 $H'_2$ is $t_2$-compressible or $t_2$-$\bdd$-compressible in $(V_2, t_2)$
by Lemma 2.4 in \cite{GH2}.
 In the former case,
let $R$ be a $t_2$-compressing disk of $H'_2$.
 If $R$ is contained in $W_1$, 
then Lemma \ref{lem:1b-A1parallel} show
that $H_2$ is weakly $K$-reducible.
 If $R$ is contained in $W_2$,
the disks $D_1$ and $R$ together show
that $H_2$ is weakly $K$-reducible.

 Hence we may assume
that $H'_2$ is $t_2$-incompressible
and has a $t_2$-$\bdd$-compressing disk $R'$ in $(V_2, t_2)$.
 We first consider the case
where $R'$ is contained in $W_2$.
 Since $H_1 \cap W_2 =A_1$ is an annulus disjoint from $K$,
%the disk $R'$ is contained in $W_1$ rather than $W_2$
%because of the unusual definition of $t_2$-$\bdd$-compressing disk.
the loop $\bdd R'$ intersects each of the loop components of $\bdd A_1$
transversely in a single point.
 Because $A_1$ is parallel to $A_2$ in $W_2$,
we can extend $R'$ to an essential disk in $W_2$
such that its boundary loop intersects the loop $l_1 = \bdd D_1$
transversely in a single point.
 This disk and $D_1$ show that $H_2$ is $K$-stabilized,
and we obtain the conclusion (1).
 Hence we may assume that the disk $R'$ is contained in $W_1$.
 Then we obtain the conclusion
by Lemmas \ref{lem:1b-A1parallel} and \ref{lem:1b-H'_1H'_2}.
\end{proof}

\begin{lemma}\label{lem:1b-QHincompressible}
 Suppose that $A_1$ is parallel to $A_2$ in $W_2$, 
and that $Q \cup H'_1$ is $K$-incompressible in $(W_1, K)$.
 Then the $(2,0)$-splitting $H_2$ of $(M,K)$ is weakly $K$-reducible.
\end{lemma}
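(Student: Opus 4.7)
The plan is to follow the pattern established in Lemma \ref{lem:1a-QHincompressible} of the previous section and reduce this lemma to an immediate consequence of two earlier lemmas, combined with the dichotomy of Lemma 3.1 of \cite{GH2}. The hard geometric work has in fact already been carried out in Lemma \ref{lem:1b-H'_1H'_2}, so this statement should be essentially a bookkeeping wrap-up.

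First I would invoke Lemma 3.1 of \cite{GH2} on $Q \cup H'_1$ in $(W_1, K)$. Since the hypothesis excludes $K$-compressibility, this forces the existence of a $K$-$\bdd$-compressing disk $D$ of $Q \cup H'_1$. By the definition of $K$-$\bdd$-compressing disk recalled in Definition 2.3 of \cite{GH2}, the arc $\bdd D \cap H_2$ lies in $H'_2$ rather than in the annulus $A_2$, since $A_2$ is the component of $H_2 - (l_1 \cup l_2)$ on the side of $V_1$ that is disjoint from the $\bdd$-compression locus permitted by the definition.

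Next I would split according to which component of $Q \cup H'_1$ the disk $D$ is incident to. If $D$ is incident to $Q$, it realises case (4) of Lemma \ref{lem:1b-A1parallel} (we are assuming $A_1$ is parallel to $A_2$ in $W_2$), and that lemma directly concludes that $H_2$ is weakly $K$-reducible. If instead $D$ is incident to $H'_1$, then the hypotheses of Lemma \ref{lem:1b-H'_1H'_2} are met, and because $Q \cup H'_1$ is $K$-incompressible the ``moreover'' clause of that lemma picks out conclusion (1), namely that $H_2$ is weakly $K$-reducible. Either way the desired conclusion follows.

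I do not foresee a genuine obstacle: the delicate analysis, including the isotopy of $H_1$ along the $\bdd$-compressing disk, the comparison of the planar pieces $P$, $P_{21}$, $P_{22}$ in $W_2$ under the parallelism hypothesis, and the appeal to Proposition \ref{prop:semistabilized} when the semi-stabilized possibility arises, has already been absorbed into Lemma \ref{lem:1b-H'_1H'_2}. The only point requiring a moment of care is verifying that the definitional restriction on $\bdd D \cap H_2$ genuinely sends $D$ into one of the two cases listed in the hypotheses of Lemmas \ref{lem:1b-A1parallel} and \ref{lem:1b-H'_1H'_2}, but this is immediate from the setup of Case (1)(B).
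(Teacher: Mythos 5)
Your proposal is correct and follows essentially the same route as the paper's own proof: extract a $K$-$\bdd$-compressing disk of $Q \cup H'_1$, note that its arc on $H_2$ must lie in $H'_2$ rather than $A_2$ (the paper's stated reason being that the two components of $\bdd A_2$ bound the distinct pieces $Q$ and $H'_1$), and then conclude via case (4) of Lemma \ref{lem:1b-A1parallel} or the ``moreover'' clause of Lemma \ref{lem:1b-H'_1H'_2} according to which component the disk is incident to.
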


\begin{proof}
 Since $Q \cup H'_1$ is $K$-incompressible,
it has a $K$-$\bdd$-compressing disk $D$ in $(W_1, K))$.
 By the definition of $K$-$\bdd$-compressing disk,
the arc $\bdd D \cap H_2$ is contained in $H'_2$
rather than $A_2$,
since one component of $\bdd A_2$ bounds $Q$
and the other bounds $H'_1$.
 Then Lemmas \ref{lem:1b-A1parallel} and \ref{lem:1b-H'_1H'_2}
show that $H_2$ is weakly $K$-reducible.
\end{proof}

 Thus, in Case (1)(B), we have
Conclusion (b), (c) or (d) of Theorem \ref{thm:l=2}
by Lemmas \ref{lem:1b-incompressible2}, 
\ref{lem:1b-A1compressible2}, \ref{lem:1b-QHcompressible}
and \ref{lem:1b-QHincompressible}.

%%%%%%%%%%%%%%%%%%%%%%%%%%%%%%%%%%%%%%%%%%%%%%%%%%%

\section{Case (2)(A)}\label{sect:2A}

 We consider in this section the case
where the loops $H_1 \cap H_2$ are
of the pattern (2) in $H_1$ and of the pattern (A) in $H_2$.
(See Figure \ref{fig:l=2}.)

 In Case (2),
the loops $l_1$ and $l_2$ of $H_1 \cap H_2$
together separate the torus $H_1$
into two annuli $A_{11}$ and $A_{12}$,
where $A_{1i}$ is contained in the handlebody $W_i$ for $i=1$ and $2$.
 Note that the two intersection points $K \cap H_1$
are contained in $A_{11}$
since the knot $K$ is entirely contained in $W_1$.
 Let $A_2, H_{21}, H_{22}$ be
as in Section \ref{sect:1A}.
 We assume, without loss of generality,
that $A_2$ is contained in $V_1$,
and $H_{21}\cup H_{22}$ in $V_2$.

 By Lemma 3.1 in \cite{GH2},
%\ref{lem:SurfaceIn20}, 
$A_{11}$ is $K$-compressible or $K$-$\bdd$-compressible
in $(W_1, K)$.
%It is well known
%that incompressible and $\bdd$-incompressible
%compact orientable $2$-manifold
%properly embedded in a handlebody
%is a disjoint union of $2$-spheres and disks.
%Hence the annulus 
$A_{12}$ is compressible or $\bdd$-compressible
in the handlebody $W_2$.

\begin{lemma}\label{lem:2a-A_12compressible}
% We consider case (2)(a).
 Suppose that $A_{12}$ is compressible in $W_2$.
 Then $H_2$ is weakly $K$-reducible.
%or the $(1,1)$-splitting $H_1$ is $K$-reducible.
\end{lemma}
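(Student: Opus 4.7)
The proof parallels that of Lemma \ref{lem:1b-A1compressible2}. Since $A_{12}$ is compressible in $W_2$, compressing along an essential disk splits $A_{12}$ into two compressing disks $E_1, E_2 \subset W_2$ of $H_2$ with $\bdd E_i = l_i$, both automatically disjoint from $K$. By Lemma 3.1 in \cite{GH2}, $A_{11}$ is $K$-compressible or $K$-$\bdd$-compressible in $(W_1, K)$, and I split on this dichotomy.

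In the $K$-compressible case, compressing $A_{11}$ along a $K$-compressing disk yields two disks $F_1, F_2 \subset W_1$ bounded by $l_1, l_2$ that together contain the two points of $K \cap A_{11}$; at least one $F_i$ meets $K$ in at most one point, hence is a $K$-compressing or meridionally compressing disk of $H_2$, and pairing it with $E_j$ for $j \neq i$ gives disks on the two sides of $H_2$ with disjoint boundaries, establishing weak $K$-reducibility.

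In the $K$-$\bdd$-compressible case, fix a $K$-$\bdd$-compressing disk $G$ with $\bdd G = \alpha \cup \beta$, where $\alpha \subset A_{11}$ is essential in $(A_{11}, K \cap A_{11})$ and $\beta$ is essential in its component of $H_2 \setminus (l_1 \cup l_2)$. When $\beta \subset H_{21}$ (or symmetrically $H_{22}$), both endpoints of $\alpha$ lie on $l_1$, and by essentiality $\alpha$ encloses at least one of the $K$-points in $A_{11}$; the $\bdd$-compression of $A_{11}$ along $G$ then yields, directly or after a further compression of a resulting $K$-free annular piece, a $K$-compressing or meridionally compressing disk of $H_2$ in $W_1$ with boundary essential in $\overline{H_{21}}$ and thus disjoint from $l_2$, which pairs with $E_2$.

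The remaining subcase, $\beta \subset A_2$ with $\alpha$ running from $l_1$ to $l_2$, is the main obstacle --- and the reason this lemma cannot mirror Lemma \ref{lem:1b-A1compressible2} directly, since here the definition of $K$-$\bdd$-compressing disk does not exclude $\beta \subset A_2$. In this subcase $\bdd G$ is a simple closed curve on the torus $T = A_{11} \cup A_2 = \bdd(V_1 \cap W_1)$ meeting each of $l_1, l_2$ transversely in a single point; since a separating curve on $T$ has even algebraic intersection with every simple closed curve, $\bdd G$ is non-separating, hence essential on $T$, so $G$ is a compressing disk of $T$ and $V_1 \cap W_1$ is a solid torus with $G$ a meridian disk. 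To conclude I would apply Lemma 2.4 in \cite{GH2} to $A_2$ in $(V_1, t_1)$ and combine the resulting $t_1$-(boundary-)compressing disk with the solid-torus structure of $V_1 \cap W_1$ given by $G$ and the cancelling disk of $t_1$ in $V_1 \cap W_1$, producing a $K$-compressing disk of $H_2$ in $W_1$ bounded by some $l_i$ to pair with $E_j$.
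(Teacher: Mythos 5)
Your proposal has two genuine gaps, both in places where the paper's own proof does substantial work.

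First, in the $K$-compressible branch you assert that compressing $A_{11}$ along a $K$-compressing disk "yields two disks $F_1, F_2$ bounded by $l_1, l_2$." That is only true when the boundary of the $K$-compressing disk is essential in the annulus $A_{11}$. A $K$-compressing disk may instead have boundary that is inessential in $A_{11}$, bounding a disk containing both points of $K \cap A_{11}$; surgery then produces not two disks but a single annulus $A$ disjoint from $K$ with $\bdd A = l_1 \cup l_2$, and no compressing disk of $H_2$ in $W_1$ is obtained directly. The paper spends roughly half its proof on exactly this subcase: it first pins down that the compressing disk of $A_{12}$ lies in $V_2$ (a step you also skip), so that if the inessential $K$-compressing disk lies in $V_1$ the two disks sit on opposite sides of $H_1$ and make $H_1$ $K$-reducible, forcing $K$ trivial (whence $H_2$ is weakly $K$-reducible by a quoted proposition); and if it lies in $V_2$ one must analyze whether the resulting annulus $A$ is $K$-compressible or $K$-$\bdd$-compressible and derive contradictions with $K$ being a core of $W_1$. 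None of this is recoverable from your sketch.

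Second, the subcase you correctly identify as "the main obstacle" — a $K$-$\bdd$-compressing disk $G$ of $A_{11}$ with $\bdd G \cap H_2 \subset A_2$, i.e.\ $G \subset V_1$ — is left unfinished ("To conclude I would apply\dots"). The paper's resolution is short and does not need your solid-torus observation about $V_1 \cap W_1$: such a $G$ is a $t_1$-$\bdd$-compressing disk of $A_2$ in $(V_1, t_1)$, and $\bdd$-compressing a parallel copy of $A_2$ along $G$ produces a $K$-compressing disk of $A_{11}$ whose boundary is inessential in $A_{11}$, reducing this case to the one above (in the paper's logical order, it simply contradicts the standing assumption that $A_{11}$ is $K$-incompressible). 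A smaller issue: in the subcase $\beta \subset H_{2i}$ with the cut-off disk meeting $K$ twice, the leftover $K$-free annulus need not be compressible — it may only be $K$-$\bdd$-compressible — and the paper treats both possibilities, whereas your "after a further compression" elides the second.
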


\begin{proof}
 Let $D$ be a compressing disk of $A_{12}$ in $W_2$.
 Compressing $A_{12}$ along this disk $D$,
we obtain two disks $D_1$ and $D_2$
bounded by $l_1$ and $l_2$ respectively.
 Suppose that $D$ is contained in $V_1$.
 Then the disk $D_1$ is also contained in $V_1$,
and we can isotope it slightly into $\text{int}\,V_1$
so that $\bdd D_1 \subset \text{int}\,A_2$
and $D_1 \cap A_{12} = \emptyset$.
 The disk $D_1$ is separating in $W_2$
and separates the loops $l_1$ and $l_2$.
 This contradicts
that the annulus $A_{12}$ connects these loops
and is disjoint from $D_1$.
 Hence the disks $D$, $D_1$ and $D_2$
are contained in $V_2$.

 Suppose first
that $A_{11}$ is $K$-compressible in $(W_1, K)$.
 Let $R$ be a $K$-compressing disk of $A_{11}$.
 If $\bdd R$ is essential in $A_{11}$
ignoring the intersection points with $K$,
then, performing a $K$-compressing operation on $A_{11}$,
we obtain a disk $R_1$ bounded by $l_1$ or $l_2$
such that $R_1$ intersects $K$ transversely in at most one point.
 Then the disks $D_1$ and $R_1$ together show
that $H_2$ is weakly $K$-reducible.

 Hence we may assume
that $\bdd R$ is inessential in the annulus $A_{11}$.
 Suppose that $R$ is contained in $V_1$.
 Then the disks $R$ and $D$ show
that $H_1$ is $K$-reducible.
 This implies that $K$ is the trivial knot 
by Proposition 4.3 in \cite{GH2}
(Lemma 2.10 in \cite{Hy3}), 
%\ref{lem:11weak}, 
%(see, for example, Theorem B in \cite{Hy1}),
and hence $H_2$ is (weakly) $K$-reducible 
by Proposition 4.6 in \cite{GH2}
(Proposition 2.9 in \cite{GHY}).
%\ref{prop:KReducible}.
%(see, for example, Proposition 2.9 in \cite{GHY}).
%by Lemma \ref{lem:reducible}.
 Hence we may assume
that the disk $R$ is contained in $V_2$.
 Compressing $A_{11}$ along $R$,
we obtain an annulus $A$ disjoint from $K$.
 Note that $\bdd A = \bdd A_{11} = l_1 \cup l_2$,
and that $K$ is entirely contained in the $3$-manifold
between $A_2$ and $A$ in $W_1$.
 The annulus $A$ is $K$-compressible or $K$-$\bdd$-compressible
in $(W_1, K)$ by Lemma 3.1 in \cite{GH2}.
%\ref{lem:SurfaceIn20}.
 In the former case, compressing $A$,
%we obtain a disk disjoint from the knot $K$ and bounded by $l_1$.
% Then this disk and the disk $D_1$ together show
%that the $(2,0)$-splitting $H_2$ is weakly $K$-reducible.
we obtain two disks
which are disjoint from $K$
and bounded by the loops $l_1$ and $l_2$.
 Then the union of these disks and the annulus $A_2$ forms a $2$-sphere,
which bounds a $3$-ball $B$ in $W_1$
such that $B$ entirely contains $K$.
 This contradicts that $K$ is a core in $W_1$. 
%(see Lemma 3.1 in \cite{GHY}).
 In the latter case,
let $R'$ be a $K$-$\bdd$-compressing disk of $A$.
 Since the two loops of $\bdd A$ are contained
in distinct components of $H_2 \cap V_2 = H_{21} \cup H_{22}$,
$R'$ is not contained in $V_2$.
 Hence $R'$ is contained in $V_1$,
and the arc $\bdd R' \cap H_2$ is contained in $A_2$.
 Performing a $K$-$\bdd$-compressing operation on the annulus $A$
along this disk $R'$,
we obtain a peripheral disk
which cuts off a $3$-ball containing $K$ from $W_1$.
 This again contradicts
that $K$ forms a core of $W_1$.

 Hence we may assume
that the annulus $A_{11}$ is $K$-incompressible,
and then it has a $K$-$\bdd$-compressing disk $C$ in $(W_1, K)$.
 Suppose first that $C$ is contained in $V_1$.
 Then, by the definition of $K$-$\bdd$-compressing disk,
$\bdd C$ intersects the annulus $A_2$ in an essential arc,
and $C$ forms a $t_1$-$\bdd$-compressing disk of $A_2$ in $(V_1, t_1)$.
 Performing a $\bdd$-compressing operation on a copy of $A_2$ along $C$,
we obtain a $K$-compressing disk of $A_{11}$.
 This contradicts our assumption.
 Hence we may assume that $C$ is contained in $V_2$.
 Since the two boundary loops of $\bdd A_{11}$ are contained
in distinct components of $H_2 \cap V_2 = H_{21} \cup H_{22}$,
$\bdd C$ intersects the annulus $A_{11}$ in an arc
which is inessential on $A_{11}$
ignoring the intersection points with $K$.
 Performing a $\bdd$-compressing operation $A_{11}$ along $C$,
we obtain an annulus $Z$ and a disk $P$.
 Note that a component of $\bdd Z$ is $l_1$ or $l_2$, say $l_1$,
and hence the other components of $\bdd Z$ and $\bdd P$
are disjoint from the loop $l_1= \bdd D_1$.
 The disk $P$ intersects $K$ transversely
in one or two points.
 When it intersects $K$ in one point,
it forms a meridionally compressing disk of $H_2$ in $(W_1, K)$.
 Hence the disks $D_1$ and $P$ together show
that $H_2$ is weakly $K$-reducible.
 When $P$ intersects $K$ in two points,
the annulus $Z$ is disjoint form $K$.
 Then $Z$ is $K$-compressible or $K$-$\bdd$-compressible
in $(W_1, K)$ by Lemma 3.1 in \cite{GH2}.
%\ref{lem:SurfaceIn20}.
 In the former case, compressing $Z$,
we obtain a $K$-compressing disk of $H_2$ in $(W_1, K)$
such that it is bounded by $l_1$.
 Hence this disk and $D_1$ together show
that $H_2$ is weakly $K$-reducible.
 In the latter case,
by performing a $\bdd$-compressing operation on $Z$
and isotoping the resulting disk slightly,
we obtain a $K$-compressing disk of $H_2$ in $(W_1, K)$
such that its boundary loop is disjoint from the loop $l_1$.
 Note that loops of $\bdd Z$ are not parallel in $H_2$. 
 Hence this disk and $D_1$ together show
that $H_2$ is weakly $K$-reducible.
\end{proof}

\begin{lemma}\label{lem:2a-A_12incompressible}
 Suppose that $A_{12}$ is incompressible in $W_2$.
%and that $A_{11}$ is $K$-compressible in $(W_1, K)$.
 Then $H_2$ is weakly $K$-reducible.
%one of the following conditions holds.
%\begin{enumerate}
%\renewcommand{\labelenumi}{(\theenumi)}
%\item
%%(1)
% The $(2,0)$-splitting $H_2$ 
%of $(M,K)$ is weakly $K$-reducible.
%\item
%%(2)
% The $(2,0)$-splitting $H_2$ 
%of $(M,K)$ is semi-stabilized.
%\end{enumerate}
\end{lemma}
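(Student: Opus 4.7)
The plan is to show that $A_{12}$ is parallel to $A_2$ in $W_2$, isotope $H_1$ through this parallelism so that it lies in the interior of $W_1$, and then apply Proposition \ref{thm:koba} to a $K$-compressing or meridionally compressing disk of $H_2$ in $(W_1, K)$ disjoint from the isotoped $H_1$. This mirrors the strategy of Lemma \ref{lem:1a-A1incompressible}.

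First I would establish the parallelism. Since $A_{12}$ is incompressible in the handlebody $W_2$, it is $\partial$-compressible; let $E$ be a $\partial$-compressing disk chosen so that $|E \cap H_1|$ is minimal. Then $E \cap H_1 = \alpha$ is a single essential arc of $A_{12}$ running from $l_1$ to $l_2$, and the arc $\beta = E \cap H_2$ also connects $l_1$ to $l_2$. Since the interior of $\beta$ lies in $H_2 \setminus (l_1 \cup l_2) = A_2 \sqcup H_{21} \sqcup H_{22}$ and neither $H_{21}$ nor $H_{22}$ is incident to both $l_1$ and $l_2$, the arc $\beta$ must lie in $A_2$. It follows that $E \subset V_1 \cap W_2$ and that $\partial E$ is an essential curve on the torus $\partial(V_1 \cap W_2) = A_{12} \cup A_2$ that meets each longitudinal core transversely in one point. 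Hence $V_1 \cap W_2$ is a solid torus with meridian $\partial E$, and $A_{12}$, $A_2$ are parallel longitudinal annuli in it. Isotoping $H_1$ by pushing $A_{12}$ across $V_1 \cap W_2$ and slightly past $A_2$ into $V_1 \cap W_1$ then moves $H_1$ into the interior of $W_1$ with $H_1 \cap H_2 = \emptyset$.

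Next, to produce the required disk in $(W_1, K)$, I would apply Lemma 2.4 in \cite{GH2}: the surface $H_{21} \cup H_{22}$ is $t_2$-compressible or $t_2$-$\partial$-compressible in $(V_2, t_2)$. If the resulting disk $D$ lies in $V_2 \cap W_1$, it is already a $K$-compressing (or meridionally compressing) disk of $H_2$ in $(W_1, K)$ that is disjoint from the isotoped $H_1 \subset V_1 \cap W_1$, so Proposition \ref{thm:koba} yields weak $K$-reducibility immediately. If $D$ lies in $V_2 \cap W_2$, it is a compressing disk of $H_2$ in $W_2$ with $\partial D \subset H_{21} \cup H_{22}$; combining $D$ with a disk in $W_1$ obtained by compressing $A_{11}$ along a $K$-compressing disk furnished by Lemma 3.1 in \cite{GH2} (whose boundary may be arranged to be $l_1$ or $l_2$, hence disjoint from $\partial D$) exhibits weak $K$-reducibility directly. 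In the $t_2$-$\partial$-compressing case, the unusual definition of $\partial$-compressing disk together with $H_1 \cap W_2 = A_{12}$ being disjoint from $K$ forces the disk to lie in $V_2 \cap W_1$, reducing to the first alternative.

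The main obstacle I anticipate is the subcase in which the $K$-compressing disk $R$ of $A_{11}$ from Lemma 3.1 has $\partial R$ inessential in the underlying annulus $A_{11}$: this forces $\partial R$ to bound a disk in $A_{11}$ containing both punctures of $K \cap A_{11}$ (orientability rules out one puncture), and the compression then yields a $2$-sphere in $W_1$ meeting $K$ transversely in two points together with an annular remnant disjoint from $K$. One must argue, in the spirit of the proof of Lemma \ref{lem:2a-A_12compressible}, that the sphere bounds a ball containing a trivially embedded arc of $K$ contradicting that $K$ forms a core-like curve in $W_1$, or else reapply Lemma 3.1 to the annular remnant to descend to the essential subcase. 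Handling this carefully, in parallel with the treatment in Lemma \ref{lem:2a-A_12compressible}, will complete the proof.
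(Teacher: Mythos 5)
Your first paragraph (establishing that $A_{12}$ is parallel to $A_2$ in $W_2$ and pushing $H_1$ into $\operatorname{int}W_1$) is sound and matches what the paper does implicitly. The problem is in your treatment of the dichotomy from Lemma 2.4 of \cite{GH2}. Your claim that in the $t_2$-$\partial$-compressing case the disk ``lies in $V_2\cap W_1$, reducing to the first alternative'' is wrong. The disk does lie in $W_1$ (the paper agrees: it cannot lie in $W_2$ because $A_{12}$ cannot contain the arc $\partial R\cap H_1$), but a $t_2$-$\partial$-compressing disk $R$ of $H_{21}\cup H_{22}$ has boundary consisting of an arc in $H_2$ \emph{and an arc in $H_1$} (it is a $K$-$\partial$-compressing disk of $A_{11}$), so it is not a $K$-compressing or meridionally compressing disk of $H_2$ and cannot be fed to Proposition \ref{thm:koba}. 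This case is where essentially all of the work in the paper's proof lies: one must isotope $H_1$ along $R$, observe that $A_{11}$ breaks into an annulus $A$ and a disk $R_1$ meeting $K$ in one or two points, dispose of the one-point case via Proposition \ref{thm:koba}, and in the two-point case build a separating disk $R_2$ bounded by $l_1$ cutting $W_1$ into two solid tori, apply Lemma 3.1 of \cite{GH2} to $R_2$, and rule out one subcase by a homological intersection-number argument in $H_2(U_2;\partial U_2)$. None of this appears in your proposal, and the ``main obstacle'' you flag (a $K$-compressing disk of $A_{11}$ with inessential boundary) is not where the difficulty actually is.

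There is a secondary gap in your $t_2$-compressing subcase with $D\subset V_2\cap W_2$: you propose to pair $D$ with ``a disk in $W_1$ obtained by compressing $A_{11}$ along a $K$-compressing disk furnished by Lemma 3.1,'' but that lemma only guarantees that $A_{11}$ is $K$-compressible \emph{or} $K$-$\partial$-compressible, so the disk you want need not exist. (The paper instead derives a contradiction in this subcase: compressing $H_{21}\cup H_{22}$ along $D$ produces a compressing disk of $A_{12}$, contradicting incompressibility.) You would need to either adopt that contradiction argument or supply the missing analysis of the $K$-$\partial$-compressible alternative.
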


% See 
%%the sentences right before Lemma \ref{lem:1a-QHcompressible}
%Section 4 
%for the definition of semi-stabilized $(2,0)$-splitting
%and for characterization of a pair $(M, K)$
%which admits such a splitting.

\begin{proof}
 Since $A_{12}$ is incompressible in $W_2$,
it is $\bdd$-compressible.
%it has a $\bdd$-compressing disk, say $R$, in $W_2$.
% Because the two loops of $\bdd A_{12}$ are contained
%in distinct components of $H_2 \cap V_2 = H_{21} \cap H_{22}$,
%$R$ is not contained in $V_2$.
% $R$ is in $V_1$,
%and the arc $\bdd R \cap H_2$ is in $A_2$.
 Then $A_{12}$ is parallel to $A_2$ in $W_2$.

 By Lemma 2.4 in \cite{GH2},
$H_{21} \cup H_{22}$
is $t_2$-compressible or $t_2$-$\bdd$-compressible in $(V_2, t_2)$.
 We consider first the former case.
 Let $D$ be a $t_2$-compressing disk of $H_{21} \cup H_{22}$.
 When $D$ is in $W_2$,
compressing $H_{21} \cup H_{22}$,
we obtain a compressing disk of $A_{12}$.
 This contradicts our assumption.
 When $D$ is in $W_1$,
we can isotope the torus $H_1$ into $\text{int}\,W_1$
so that $H_1 \cap D = \emptyset$.
 Then Proposition \ref{thm:koba} shows
that $H_2$ is weakly $K$-reducible.

 Hence we may assume
that $H_{21} \cup H_{22}$ has a $t_2$-$\bdd$-compressing disk $R$.
% If $D'$ is contained in $W_2$,
%then the arc $\bdd D' \cap H_1$ is contained in the annulus $A_{12}$.
% Since the two boundary loops of $\bdd A_{12}$ are contained
%in distinct components of $H_2 \cap V_2 = H_{21} \cup H_{22}$,
%the arc $\bdd D' \cap H_1$ is inessential on the annulus $A_{12}$.
% This contradicts the definition of $t_2$-$\bdd$-compressing disk.
 $R$ cannot be in $W_2$,
since $A_{12}$ cannot contain the arc $\bdd R \cap H_1$
by the definition of $\bdd$-compressing disk.
 Hence $R$ is contained in $W_1$,
and is a $K$-$\bdd$-compressing disk of the annulus $A_{11}$
by the definition of $t_2$-$\bdd$-compressing disk again.
 We assume, without loss of generality,
that $R$ is incident to $H_{22}$ rather than $H_{21}$.
 We isotope the torus $H_1$ in $(M, K)$ along the disk $R$
slightly beyond the arc $\bdd R \cap H_{22}$.
 Then the annulus $A_{11}$ is deformed into an annulus, say $A$,
and a disk, say $R_1$.
 The torus with one hole $H_{22}$ is deformed
into an annulus $A'$.
 The annuli $A_{12}$ and $A_2$ are deformed
into disks with two holes, say $P_1$ and $P_2$ respectively.
 See Figure \ref{fig:12-2}.
 Note that $P_1$ is parallel to $P_2$ in $W_2$
since $A_{12}$ is parallel to $A_2$ in $W_2$ before the isotopy.
 The disk $R_1$ intersects $K$
transversely in one or two points.
 When it intersects $K$ in one point,
we can isotope the torus $H_1$ into $\text{int}\,W_1$
along the parallelism between $P_1$ and $P_2$.
 Further, we can take a parallel copy of $R_1$
so that it is disjoint from $H_1$.
 Then Proposition \ref{thm:koba} shows
that $H_2$ is weakly $K$-reducible.

\begin{figure}[htbp]
\centering
\includegraphics[width=.7\textwidth]{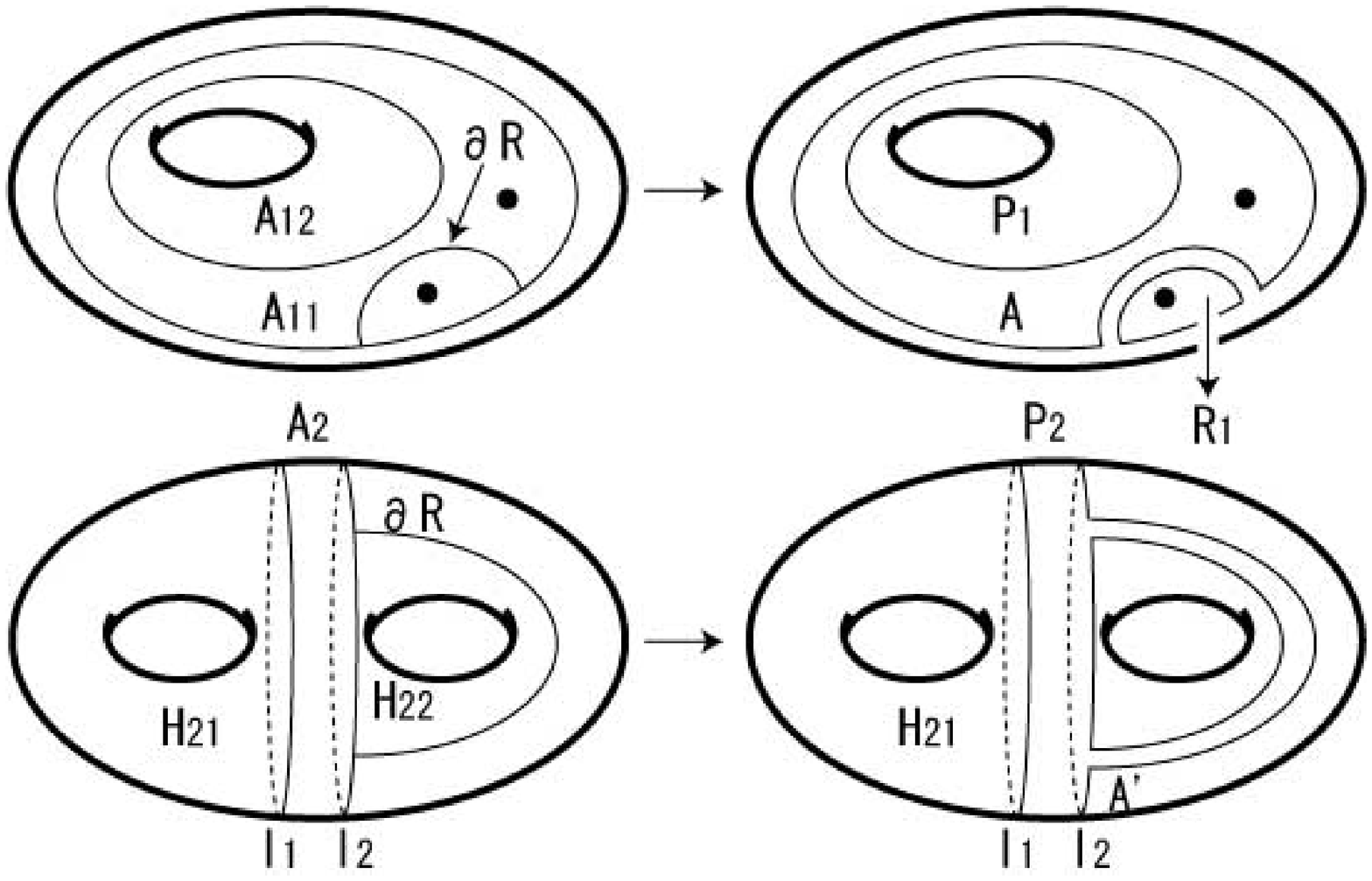}
\caption{}
\label{fig:12-2}
\end{figure}

 Hence we may assume
that the disk $R_1$ intersects $K$ in two points.
 Let $R_2$ be a disk bounded by the loop $l_1$ in $W_1$
such that it is obtained from the disk $A \cup A' \cup R_1$
by pushing its interior into $\text{int}\,V_2 \cap W_1$.
 Then this disk $R_2$ is bounded by the loop $l_1$,
and intersects $K$ transversely in two points.
 Moreover, $R_2$ divides the handlebody $W_1$
into two solid tours $U_1$ and $U_2$
where $U_1$ is bounded by the torus $H_{21} \cup R_2$.
 By Lemma 3.1 in \cite{GH2},
%\ref{lem:SurfaceIn20}, 
$R_2$ is $K$-compressible or $K$-$\bdd$-compressible
in $(W_1, K)$.
 We consider first the latter case.
 Let $E$ be a $K$-$\bdd$-compressing disk of $R_2$.
 When $E$ is contained in $U_1$,
performing a $K$-$\bdd$-compressing on $R_2$ along $E$,
we obtain a disk $E_1$ intersecting $K$ in a single point.
 We isotope $E_1$ slightly off of the disk $R_2$
%into $\text{int}\,V_2$.
so that $\bdd E_1$ is in $H_{21}$.
 We can isotope the torus $H_1$ into $\text{int}\,W_1$
along the parallelism between $P_1$ and $P_2$
so that $H_1 \cap E_1 = \emptyset$.
 Then Proposition \ref{thm:koba} shows
that $H_2$ is weakly $K$-reducible.
 When $E$ is contained in the other solid torus $U_2$,
by performing a $K$-$\bdd$-compressing operation on $R_2$,
we obtain a meridian disk, say $E_2$,
intersecting $K$ in a single point.
 We can form a knot $K'$ taking a sum of the arc $K \cap U_2$
and an arc connecting the two points $K \cap R_2$ in $R_2$.
 Thus the disk $R_1$ intersects $K'$
transversely in precisely two points,
while the disk $E_2$ intersects $K'$
transversely in a single point.
 This contradicts the fact
that $R_1$ and $E_2$ represent the same homology class
in $H_2 (U_2 ; \bdd U_2)$,
and hence they have the same algebraic intersection number
with $[K'] \in H_1 (U_2)$.

 Hence we may assume that $R_2$ has a $K$-compressing disk
in $(W_1,K)$. 
% If $R'$ is contained in the solid torus $U_1$,
%then we can see that
%the $(2,0)$-splitting $H_2$ is weakly $K$-reducible
%again by Proposition \ref{thm:koba}
%(when $M$ has a $2$-fold covering branched along the knot $K$).
% Hence we can assume that $R'$ is contained in $U_2$.
 Performing a $K$-compressing operation on $R_2$,
we obtain a disk, say $G$, bounded by the loop $l_1$.
 If $G$ is contained in $U_2$,
then it separates the intersection points $K \cap R_2$
from $K \cap (A_{11} \cup R_1)$,
a contradiction.
 Hence $G$ is contained in $U_1$.
 Then we move $H_1$ into int\,$W_1$ 
along the parallelism between $P_1$ and $P_2$
so that $H_1 \cap G = \emptyset$,
to see that $H_2$ is weakly $K$-reducible
by Proposition \ref{thm:koba}.
\end{proof}

 Thus, in Case 2(A), we have the conclusion (b) of Theorem \ref{thm:l=2}
by Lemmas \ref{lem:2a-A_12compressible} and \ref{lem:2a-A_12incompressible}.

%%%%%%%%%%%%%%%%%%%%%%%%%%%%%%%%%%%%%%%%%%%%%%%%%%%

\section{Case (2)(B)}\label{sect:2B}

 We consider in this section the case
where the intersection loops $H_1 \cap H_2 = l_1 \cup l_2$
are of the pattern (2) in $H_1$
and of the pattern (B) in $H_2$  in Figure \ref{fig:l=2}.

Let $A_2, H'_2$ be as in Section \ref{sect:1B},
and $A_{11}$, $A_{12}$ as in Section \ref{sect:2A}.
 See Figure \ref{fig:l=2}.
 We may assume, without loss of generality,
that $A_2, H'_2$ are properly embedded
in $V_1, V_2$ respectively.
 Note that $A_{11}, A_{12}$ are properly embedded in $W_1, W_2$ respectively,
and the two intersection points $K \cap H_1$
are contained in the annulus $A_{11}$
since the knot $K$ is entirely contained in $W_1$.

% It is well known
%that incompressible and $\bdd$-incompressible
%compact orientable $2$-manifold
%properly embedded in a handlebody
%is a disjoint union of $2$-spheres and disks.
% Hence the annulus 
$A_{12}$ is compressible or $\bdd$-compressible
in the handlebody $W_2$,
so we have four cases below.
\begin{enumerate}
\renewcommand{\labelenumi}{(\roman{enumi})}
\item
%(i)
$A_{12}$ has a compressing disk in $V_1$.
\item
%(ii) The annulus
$A_{12}$ has a compressing disk in $V_2$.
\item
%(iii) The annulus
$A_{12}$ has a $\bdd$-compressing disk in $V_1$.
\item
%(iv) The annulus
$A_{12}$ has a $\bdd$-compressing disk in $V_2$.
\end{enumerate}

 By Lemma 3.1 in \cite{GH2},
%\ref{lem:SurfaceIn20},
%the annulus 
$A_{11}$ is $K$-compressible or $K$-$\bdd$-compressible
in $(W_1, K)$.
 Here we divide into seven cases as below.

\begin{enumerate}
\renewcommand{\labelenumi}{(\Alph{enumi})}
%(A) In $V_1$ the annulus $A_{11}$ has a $K$-compressing disk
%with boundary loop essential on $A_{11}$.
%\newline
\item
%(A) The annulus
$A_{11}$ has a $K$-compressing disk
%with
whose boundary loop is essential in $A_{11}$.
\item
%(B)
In $V_1$, %the annulus
$A_{11}$ has a $K$-compressing disk
%with
whose boundary loop is inessential in $A_{11}$.
\item
%(C)
In $V_2$, %the annulus
$A_{11}$ has a $K$-compressing disk
%with
whose boundary loop is inessential in $A_{11}$.
\item
%(D)
In $V_1$, %the annulus
$A_{11}$ has a $K$-$\bdd$-compressing disk.
%with boundary loop is intersecting $A_{11}$ in an essential arc.
\item
%(E)
In $V_2$, %the annulus
$A_{11}$ has a $K$-$\bdd$-compressing disk
%with
whose boundary loop intersects $A_{11}$ in an essential arc.
\item
%(F)
In $V_2$, %the annulus
$A_{11}$ has a $K$-$\bdd$-compressing disk
%such that its
whose boundary loop intersects $A_{11}$ in an inessential arc
cutting off from $A_{11}$
a disk which intersects
%the knot
$K$ in a single point.
\item
%(G)
In $V_2$, %the annulus
$A_{11}$ has a $K$-$\bdd$-compressing disk
%such that its
whose boundary loop intersects $A_{11}$ in an inessential arc
cutting off from $A_{11}$
a disk which intersects
%the knot
$K$ in two points.
\end{enumerate}

%     i         ii    iii     iv
% A   r2        r2   r2EFG    r2
% B   r2EFGivD  r1            x
% C   E         x      E      x
% D   B         B      B      (6)
% E   1         x      1      x
% F   r2        x      r2     x
% G   r2        x     (1)     x
%
% Lemmas
% (i)(iii)(A)or(D)
% D, E, C, iFiiFiGiiG, iiiF, iiiG, A, iiB,
% ivD, iB, iiiB

 Hence we have $4 \times 7 = 28$ cases.
 By the next lemma,
we do not need to consider the 10 cases
(ii)(B), (ii)(C), (ii)(E), (ii)(F), (ii)(G),
(iv)(B), (iv)(C), (iv)(E), (iv)(F) and (iv)(G).

\begin{lemma}\label{lem:2b-(i)(iii)(A)or(D)}
 At least one of the four conditions $(i)$, $(iii)$, $(A)$ and $(D)$ holds.
\end{lemma}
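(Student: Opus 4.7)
The plan is to argue by contradiction: I suppose that none of (i), (iii), (A), (D) holds. By Lemma 3.1 in \cite{GH2} applied to $A_{11}$ in $(W_1, K)$ and to $A_{12}$ in $W_2$, there still exist a $K$-compressing or $K$-$\bdd$-compressing disk $R$ of $A_{11}$ and a compressing or $\bdd$-compressing disk $D$ of $A_{12}$. Under the contradiction hypothesis, $R$ must fall into one of the five restricted types (B), (C), (E), (F), (G), and $D$ must lie in $V_2 \cap W_2$, i.e., be of type (ii) or (iv). The goal is to combine $R$, $D$, and the annulus $A_2 \subset V_1$ that separates $V_1 \cap W_1$ from $V_1 \cap W_2$ to manufacture either a compressing or $\bdd$-compressing disk of $A_{12}$ lying in $V_1 \cap W_2$ (re\-instating (i) or (iii)) or a $K$-$\bdd$-compressing disk of $A_{11}$ on the $V_1$ side (reinstating (D)).

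For the main case, in which $R \subset V_2 \cap W_1$ (types (C), (E), (F), (G)), both $R$ and $D$ sit inside the solid torus $V_2$ and each meets the torus-with-two-holes $H'_2 = H_2 \cap V_2$ either along a loop or along an arc. I would first minimise $R \cap D$ inside $V_2$ by standard innermost-disk and outermost-arc arguments using that $R, D$ are disks. After simplification, the arc or loop $\bdd R \cap H'_2$ and the arc or loop $\bdd D \cap H'_2$ lie disjointly on $H'_2$, which allows me to push a parallel copy of $D$ along a regular neighbourhood of $R$ across $H'_2$ into $V_1 \cap W_2$. A direct check confirms that this copy is still a compressing or $\bdd$-compressing disk of $A_{12}$, contradicting the failure of (i) and (iii).

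For the remaining case (B), $R \subset V_1 \cap W_1$ with $\bdd R$ inessential in $A_{11}$, and $R$ together with the subdisk $S \subset A_{11}$ that $\bdd R$ cuts off forms a $2$-sphere in the solid torus $V_1$. Since $V_1$ is irreducible, this sphere bounds a $3$-ball $B$. A parity argument, using that $[R \cup S] = 0$ in $H_2(M)$ and $R \cap K = \emptyset$, forces $|K \cap S|$ to be even and hence equal to $2$, so $B$ captures either $t_1$ or a subarc of $K$ meeting $A_2$ twice. Analysing how $B$ meets the annulus $A_2 \subset V_1$ together with Lemma 2.4 in \cite{GH2} applied to $A_2$ in $(V_1, t_1)$ then yields a $t_1$-compressing or $t_1$-$\bdd$-compressing disk of $A_2$ in $V_1$, which can be converted to a compressing or $\bdd$-compressing disk of $A_{12}$ in $V_1 \cap W_2$, again contradicting the failure of (i) and (iii).

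The main obstacle will be the sub-cases (F) and (G), in which the $K$-$\bdd$-compressing disk $R$ cuts off a subdisk of $A_{11}$ containing one or two punctures: the arc $\bdd R \cap H'_2$ has a more complicated position on $H'_2$, and the push-across construction must be arranged to avoid re-introducing intersections with $\bdd D$ on $H'_2$ or with the arc $t_2$ inside $V_2$. Keeping control of the meridional structure of $V_2$ and the parity of intersections with $K$ during this step is the most delicate part of the argument.
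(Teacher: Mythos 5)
Your overall strategy does not match the paper's, and the central step of your main case would fail. The paper's proof is a short, direct (not contradiction) argument: apply Lemma 2.4 of \cite{GH2} to the annulus $A_2 = H_2 \cap V_1$ in $(V_1,t_1)$. The resulting disk is a $t_1$-compressing or $t_1$-$\bdd$-compressing disk lying in $W_1$ or in $W_2$, and these four outcomes translate verbatim into (A), (i), (D), (iii) respectively: a compressing disk of $A_2$ in $W_1$ becomes (after compressing a copy of $A_2$) a $K$-compressing disk of $A_{11}$ with essential boundary, i.e.\ (A); in $W_2$ it becomes a compressing disk of $A_{12}$ contained in $V_1$, i.e.\ (i); a $\bdd$-compressing disk in $W_1$ is already a $K$-$\bdd$-compressing disk of $A_{11}$, i.e.\ (D); in $W_2$ it is a $\bdd$-compressing disk of $A_{12}$, i.e.\ (iii). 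You do invoke Lemma 2.4 applied to $A_2$, but only buried inside your sub-case (B), and there you assert that the resulting disk ``can be converted to a compressing or $\bdd$-compressing disk of $A_{12}$ in $V_1\cap W_2$'' --- which is false when the disk lies in $W_1$ (it then yields (A) or (D) instead). Had you placed this application of Lemma 2.4 at the start and tracked all four outcomes, the lemma would follow immediately with no case analysis of $R$ and $D$ at all.

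The concrete gap in your main case ($R\subset V_2\cap W_1$): you propose to ``push a parallel copy of $D$ along a regular neighbourhood of $R$ across $H'_2$ into $V_1\cap W_2$.'' This does not make sense. The surface $H'_2=H_2\cap V_2$ is part of $\partial W_2$, so a properly embedded disk in $W_2$ cannot be isotoped ``across'' it while remaining in $W_2$; and the wall separating $V_1\cap W_2$ from $V_2\cap W_2$ inside $W_2$ is the annulus $A_{12}$ itself, the very surface $D$ is supposed to compress or $\bdd$-compress, so any isotopy moving $D$ from one side to the other must cross $A_{12}$ and destroys the property you need. Minimising $R\cap D$ does not help, since $R$ and $D$ live in different handlebodies ($W_1$ and $W_2$) and meet only near $H_2$. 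Your own flagged worries about sub-cases (F) and (G) are symptoms of this: no amount of care with $t_2$ or parity rescues a construction whose basic geometry is wrong. The missing idea is simply that the dichotomy in the lemma is governed by the single surface $A_2$ in $(V_1,t_1)$, not by combining disks of $A_{11}$ and $A_{12}$.
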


\begin{proof}
 By Lemma 2.4 in \cite{GH2},
$A_2$ is $t_1$-compressible or $t_1$-$\bdd$-compressible
in $(V_1, t_1)$.
 In the former case,
let $D$ be a $t_1$-compressing disk of $A_2$.
 If $D$ is contained in $W_1$,
then compressing a copy of $A_2$ along $D$,
we obtain a $K$-compressing disk $D_1$ of $A_{11}$
such that $\bdd D_1$ is essential in $A_{11}$
ignoring the intersection points $K \cap A_{11}$.
% Note that this $K$-compressing disk $D_1$ is contained in $V_1$.
 Thus the condition (A) holds.
 Hence we may assume that $D$ is contained in $W_2$.
 Compressing a copy of $A_2$ along $D$,
we obtain a compressing disk $D_2$ of $A_{12}$
such that $D_2$ is contained in $V_1$.
 Thus the condition (i) holds.

 In the latter case,
let $R$ be a $t_1$-$\bdd$-compressing disk of $A_2$.
 When $R$ is contained in $W_1$,
it is also a $K$-$\bdd$-compressing disk of $A_{11}$ in $(W_1, K)$
because of the definition of $t_1$-$\bdd$-compressing disk.
%Definition \ref{def:bdd-comp}.
% Because the arc $\bdd R \cap A_2$ connects
%distinct components of $\bdd A_2$,
%the arc $\bdd R \cap A_{11}$ is also essential on $A_{11}$
%ignoring the intersection points $K \cap A_{11}$.
 Thus the condition (D) holds.
 Hence we may assume that $R$ is contained in $W_2$.
 Then $R$ is also a $\bdd$-compressing disk of $A_{12}$.
%since the arc $\bdd R \cap A_2$ connects
%distinct components of $\bdd A_2$.
 Thus the condition (iii) holds.
\end{proof}

%By the above lemma,
%we do not need to consider the 10 cases
%(ii)(B), (ii)(C), (ii)(E), (ii)(F), (ii)(G),
%(iv)(B), (iv)(C), (iv)(E), (iv)(F) and (iv)(G).

\begin{lemma}\label{lem:2b-D}
% We consider case (2)(b).
In Case $(D)$ the condition $(B)$ holds.
\end{lemma}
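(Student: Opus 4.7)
The plan is to use the disk provided by Case $(D)$ to construct, via a $\bdd$-compression of $A_2$, a $K$-compressing disk of $A_{11}$ in $V_1$ whose boundary is inessential in $A_{11}$.

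First I would take the $K$-$\bdd$-compressing disk $R$ of $A_{11}$ in $V_1$ given by Case $(D)$, and write $\bdd R = \alpha \cup \beta$ with $\alpha \subset A_{11}$ and $\beta \subset H_2$. As noted in the proof of Lemma \ref{lem:2a-A_12compressible}, the definition of $K$-$\bdd$-compressing disk forces $\beta$ to lie in $A_2 = V_1 \cap H_2$ as an essential arc, so $R$ is simultaneously a $t_1$-$\bdd$-compressing disk of $A_2$ in $(V_1, t_1)$. I would then $\bdd$-compress $A_2$ along $R$ to obtain a disk $D^*$ properly embedded in $V_1$ and disjoint from $t_1 = K \cap V_1$ (since both $A_2$ and $R$ are). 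Because $R \subset V_1 \cap W_1$, a small isotopy places $D^*$ in $V_1 \cap W_1$ with $\bdd D^* \subset A_{11}$.

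The core of the argument is to analyze $\bdd D^*$ as a loop on $A_{11}$. The $\bdd$-compression replaces the two circles $l_1, l_2 = \bdd A_2$ by a single loop on $H_1$ that bands them together along two parallel copies of $\alpha$: it traces most of $l_1$, crosses to $l_2$ along one copy of $\alpha$, traces most of $l_2$, and returns along the second copy of $\alpha$. Passing to the rectangle picture obtained by cutting $A_{11}$ along $\alpha$, this loop becomes the boundary of an obvious inner sub-rectangle, so it is a simple closed curve bounding a disk $E \subset A_{11}$ that is, up to a slight push-off of $\bdd A_{11}$, the complement of a thin strip around $\alpha$. Because $\alpha \subset \bdd R$ and $R \cap K = \emptyset$, the arc $\alpha$ is disjoint from the two punctures $K \cap A_{11}$, so the strip around $\alpha$ can be chosen disjoint from both punctures; consequently both punctures lie in $E$. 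Thus $\bdd D^*$ is inessential in $A_{11}$ but essential in $A_{11} \setminus K$, and $D^*$ is the desired $K$-compressing disk giving condition $(B)$.

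The main point requiring care is the orientation bookkeeping in this last step: I must confirm that the two copies of $\alpha$ appearing in $\bdd D^*$ are traversed in opposite directions, so that the loop is a simple closed curve enclosing the two punctures rather than, for instance, a loop parallel to $l_1$ or one bounding only a neighborhood of $\alpha$. This is a routine check once $A_{11}$ is cut open along $\alpha$, but it is essentially the only nontrivial step in the proof.
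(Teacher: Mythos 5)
Your proof is correct and takes essentially the same route as the paper's: both observe that the Case (D) disk is simultaneously a $t_1$-$\bdd$-compressing disk of $A_2$ in $(V_1,t_1)$ and then perform the $\bdd$-compression on $A_2$ to produce a $K$-compressing disk of $A_{11}$ in $V_1$ whose boundary is inessential in $A_{11}$ ignoring $K \cap A_{11}$. The extra verification you supply, that the resulting boundary loop bounds a subdisk of $A_{11}$ containing both points of $K \cap A_{11}$ and is therefore $K$-essential, is detail the paper leaves implicit.
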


\begin{proof}
 In Case (D),
there is a $K$-$\bdd$-compressing disk $D$ of $A_{11}$
in $(W_1, K)$
such that $D$ is contained in $V_1$.
 Then $\bdd D \cap H_2$ is an essential arc in $A_2$
by the definition of $K$-$\bdd$-compressing disks.
%by Definition \ref{def:bdd-comp}.
%and such that the arc $\bdd D \cap A_{11}$ is essential on $A_{11}$
%ignoring the intersection points $K \cap A_{11}$.
%and hence
 Hence $D$ is a $t_1$-$\bdd$-compressing disk of $A_2$
in $(V_1, t_1)$.
 Note that the arc $\bdd D \cap A_{11}$ is also essential in $A_{11}$
ignoring the intersection points $K \cap A_{11}$.
 Performing a $t_1$-$\bdd$-compressing operation on $A_2$ along $D$,
we obtain a $K$-compressing disk of $A_{11}$
such that its boundary loop is inessential in $A_{11}$
ignoring the intersection points $K \cap A_{11}$.
 Hence the condition (B) holds.
\end{proof}

We will show the present case of Theorem \ref{thm:l=2} 
in accordance with Table I.

\begin{table}[htbp]
\begin{center}
\begin{tabular}{|l|c|c|c|c|c|c|c|}
%\noalign{\hrule height0.8pt}
\hline
 & (A) & (B) & (C) & (D) & (E) & (F) & (G) \\
\hline
(i) & 5.8 & 5.12 & 5.4 & 5.12 & 5.3 & 5.5 & 5.5 \\
\hline
(ii) & 5.8 & --- & --- & 5.9 & --- & --- & --- \\
\hline
(iii) & 5.8 & 5.12 & 5.4 & 5.12 & 5.3 & 5.6 & 5.7 \\
\hline
(iv) & 5.8 & --- & --- & 5.10 & --- & --- & --- \\
\hline
\end{tabular}

\medskip

Table I
\end{center}
\end{table}

\begin{lemma}\label{lem:2b-E}
 In Case $(E)$
we can isotope $H_1$ in $(M,K)$
so that $H_1 \cap H_2$ consists of a single loop
which is $K$-essential both in $H_1$ and in $H_2$.
\end{lemma}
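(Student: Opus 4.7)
The plan is to use the $K$-$\bdd$-compressing disk $D$ provided by Case $(E)$ to isotope $H_1$ along $D$, merging the two intersection loops into one. First I would extract the structure of $D$: it lies in $V_2 \cap W_1$, is disjoint from $K$, and its boundary decomposes as $\bdd D = \alpha \cup \beta$ with $\alpha \subset A_{11}$ an essential arc in the annulus $A_{11}$ and $\beta \subset H_2$. Since $D \subset V_2$, the arc $\beta$ lies in $H_2 \cap V_2 = H'_2$. Essentiality of $\alpha$ in the annulus $A_{11}$ forces $\alpha$ to be a spanning arc, so $\alpha$ (and hence $\beta$) has one endpoint on $l_1$ and the other on $l_2$.

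Next I would isotope $H_1$ in $(M,K)$ along $D$, pushing a thin rectangular neighborhood of $\alpha$ in $A_{11}$ across $D$ slightly beyond $\beta$. Because $D$ misses $K$ and the isotopy support can be chosen disjoint from the two points of $K \cap A_{11}$, the knot is untouched by the move. The effect on the intersection $H_1 \cap H_2$ is the standard band-sum operation: the loops $l_1$ and $l_2$ merge along the band $N(\beta)$ into a single simple closed curve $l'$ on $H_2$; correspondingly, $A_{11}$ becomes a disk $A_{11}^{\mathrm{new}}$ bounded by $l'$ and containing both points of $K \cap H_1$ in its interior, while $A_{12}$ becomes a once-holed torus.

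It then remains to verify that $l'$ is $K$-essential on both surfaces. On $H_1^{\mathrm{new}}$ this is immediate: $l'$ bounds the disk $A_{11}^{\mathrm{new}}$ meeting $K$ in two points, so in the twice-punctured surface $H_1 - K$ the curve $l'$ is not null-homotopic and does not bound a disk meeting $K$ in at most one point. On $H_2$, since $K \cap H_2 = \emptyset$, $K$-essentiality reduces to essentiality, and I would argue this via the regular neighborhood $N = N(l_1 \cup \beta \cup l_2) \subset H_2$: this neighborhood is a pair of pants with $l'$ as one of its three boundary circles, and a short Euler-characteristic computation shows that cutting $H_2$ along $l'$ yields two subsurfaces of negative Euler characteristic, neither of which is a disk, so $l'$ does not bound a disk in $H_2$.

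The main obstacle I anticipate is bookkeeping the band-sum near the endpoints of $\beta$ with enough care to confirm that the intersection really does reduce from two loops to exactly one loop, and to pin down both the homeomorphism type of $A_{11}^{\mathrm{new}}$ and the isotopy class of $l'$ in $H_2$. Once these topological identifications are secure, both $K$-essentiality checks are routine $\chi$-arguments, and the conclusion of the lemma follows.
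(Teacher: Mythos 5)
Your proposal is correct and follows essentially the same route as the paper: isotope $H_1$ along the $K$-$\bdd$-compressing disk $D$ of Case (E), turning $A_{11}$ into a disk meeting $K$ twice and band-summing $l_1$ and $l_2$ into a single loop, whose essentiality in $H_2$ comes from the fact that the arc $\bdd D \cap H'_2$ is essential in $H'_2$ (your pair-of-pants/Euler-characteristic verification is just a more explicit form of that observation). The paper states this in four sentences; you have merely filled in the routine checks it leaves implicit.
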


\begin{proof}
 In Case (E),
there is a $K$-$\bdd$-compressing disk $D$ of $A_{11}$
such that $D$ is contained in $V_2$
and such that the arc $\bdd D \cap A_{11}$ is essential in $A_{11}$
ignoring the intersection points with $K$.
 We isotope $H_1$ along $D$,
so that a band neighborhood of the arc $\bdd D \cap A_{11}$ in $A_{11}$
is isotoped into $W_2$.
 Then the annulus $A_{11}$ is deformed into a disk $Q$
intersecting $K$ in two points.
 Note that the boundary loop $\bdd Q$ is essential in $H_2$
since the arc $\bdd D \cap H'_2$ is essential in $H'_2$
%by Definition \ref{def:bdd-comp}.
by the definition of $K$-$\bdd$-compressing disks.
\end{proof}

\begin{lemma}\label{lem:2b-C}
 In Case $(C)$, the condition $(E)$ holds.
%which have been considered in Lemma \ref{lem:2b-E}.
\end{lemma}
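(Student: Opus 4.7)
The plan is to exploit the severe restriction on the disk $\Delta \subset A_{11}$ bounded by $\bdd D$ (where $D$ is the $K$-compressing disk provided by Case (C)): $\Delta$ must contain both intersection points of $K \cap H_1$, so that the sphere $D \cup \Delta$ bounds a ball $B$ in $V_2 \cap W_1$ containing $t_2$. I would then extract a $K$-$\bdd$-compressing disk of $A_{11}$ in $V_2$ with essential arc from this configuration.

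First I would show $|\Delta \cap K| = 2$. The $2$-sphere $S = D \cup \Delta$ is null-homologous because $H_2(M) = 0$ for $M$ the $3$-sphere or a lens space other than $S^2 \times S^1$, so $K \cdot S = 0$ and $|\Delta \cap K|$ is even; it is nonzero because $D$ is $K$-essential. Second, by irreducibility of the solid torus $V_2$, the sphere $S$ bounds a ball $B \subset V_2$. Since $\bdd D \subset A_{11}^\circ$, the sphere $\bdd B$ is disjoint from $H_2 = \bdd W_1$, and hence from $H'_2 = H_2 \cap V_2$; moreover $\bdd H'_2 = l_1 \cup l_2$ is disjoint from $\Delta$, so $B \cap H'_2$ is a closed $2$-submanifold of the connected surface-with-boundary $H'_2$ and must therefore be empty. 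This yields $B \cap H_2 = \emptyset$ and $B \subset V_2 \cap W_1$, and a balancing argument using $K \cdot S = 0$ together with $t_1 \subset V_1 \subset M \setminus B$ forces $t_2 \subset B$.

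Third I would construct the required disk. Let $P = A_{11} \setminus \Delta^\circ$, a pair of pants with boundary $\bdd D \cup l_1 \cup l_2$, and let $A'_{11} = P \cup D$, the annulus obtained by compressing $A_{11}$ along $D$. Then $A'_{11}$ is a properly embedded annulus in $W_1$ with boundary $l_1 \cup l_2$, lying in $(V_2 \cap W_1) \setminus B^\circ$ and hence disjoint from $K$. I would apply Lemma 3.1 of \cite{GH2} to $A'_{11}$ in $(W_1, K)$ to obtain a $K$-$\bdd$-compressing disk $E$ on the $V_2$ side. Since $A'_{11}$ is an annulus, the essential $\alpha$-arc $\bdd E \cap A'_{11}$ can be isotoped (carrying $E$ along) off the interior disk $D$ into $P$, producing a disk $E \subset V_2 \cap W_1$ whose $\alpha$-arc is an essential spanning arc of $P \subset A_{11}$, essential in $A_{11}$. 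This $E$ is precisely what Case (E) demands.

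The main obstacle is the last step: I must rule out the subcase in which $A'_{11}$ is only compressible (rather than $\bdd$-compressible) on the $V_2$ side, and justify the isotopy that pushes the $\alpha$-arc cleanly into $P$. In those unfavorable subcases I expect to argue either by using the canceling disk of the boundary-parallel arc $t_2$ inside the ball $B$ to build $E$ by hand, or by invoking Proposition \ref{thm:koba} to force one of the other conclusions of Theorem \ref{thm:l=2} (which, in the context of the larger case analysis of Section~\ref{sect:2B}, can be shown to be excluded here). Careful bookkeeping analogous to the analyses in Lemmas \ref{lem:2b-(i)(iii)(A)or(D)} and \ref{lem:2b-D} should close the argument and deliver Case (E).
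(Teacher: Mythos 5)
Your construction is the same as the paper's: compress $A_{11}$ along the disk $D$ of Case (C) (whose boundary bounds a disk in $A_{11}$ meeting $K$ in two points) to obtain an annulus $A=A'_{11}$ disjoint from $K$ with $\bdd A=l_1\cup l_2$, apply Lemma 3.1 of \cite{GH2} to $A$, and convert a $K$-$\bdd$-compressing disk of $A$ lying in $V_2$ into one of $A_{11}$ with essential arc, giving (E). That last conversion is fine and is exactly what the paper does. But the two subcases you leave open are genuine gaps, and your proposed fallbacks would not close them. First, you do not rule out the case where $A$ is $K$-compressible. The paper's point here is that $K$ lies in the region of $W_1$ bounded by the torus $A\cup A_2$ (where $A_2\subset H_2$); compressing $A$ then produces two disks bounded by $l_1$ and $l_2$ which, together with $A_2$, form a sphere bounding a ball in $W_1$ containing $K$ — impossible since $K$ is a core of the genus-two handlebody $W_1$ and hence not null-homotopic in it. Neither a canceling disk of $t_2$ inside your ball $B$ nor Proposition \ref{thm:koba} substitutes for this: the lemma asserts (E) unconditionally, so you cannot discharge a subcase by ``forcing one of the other conclusions of Theorem \ref{thm:l=2}'' and claiming they are excluded.

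Second, Lemma 3.1 does not hand you a $\bdd$-compressing disk ``on the $V_2$ side'': the disk $R$ could have its arc $\bdd R\cap H_2$ in $A_2$, i.e.\ lie on the $V_1$ side, and you never address this. The paper kills it by the same mechanism: $\bdd$-compressing $A$ along such an $R$ yields a peripheral disk cutting off from $W_1$ a ball containing $K$, again contradicting that $K$ is a core of $W_1$. So the missing idea in both places is the observation that $K$ sits between $A$ and $A_2$, which turns every compression of $A$ away from $V_2$ into a ``$K$ in a ball'' contradiction. (Minor quibbles: your claim that $A'_{11}\subset (V_2\cap W_1)\setminus B^\circ$ is only true after pushing $P\subset H_1=\bdd V_2$ slightly off the boundary, and the homology computation of $|\Delta\cap K|=2$ is unnecessary — it follows immediately from $\bdd D$ being $K$-essential while $|A_{11}\cap K|=2$.)
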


 We have already considered Case (E) in Lemma \ref{lem:2b-E}.

\begin{proof}
 In Case (C),
there is a $K$-compressing disk $D$ of $A_{11}$
such that $D$ is contained in $V_2$
and that $\bdd D$ bounds a disk $D'$
in $A_{11}$.
% Since $D$ is a $K$-compressing disk,
%the disk $D'$ must intersect $K$.
% If $D'$ intersects $K$ in a single point,
%then the $2$-sphere $D \cup D'$ intersects $K$ in a single point,
%which contradicts that $W_1$ does not contain
%a non-separating $2$-sphere.
% Thus 
 Then $D'$ intersects $K$ in two points,
and a $K$-compressing operation on $A_{11}$ along $D$
yields an annulus $A$
such that it is disjoint from $K$
and that $\bdd A = \bdd A_{11}$.
% This annulus $A$ is separating in $W_1$
 Since $A_{11}$ is separating in $W_1$, so is $A$.
 Note that $K$ is between the annuli $A$ and $A_2$.

 The annulus $A$ is $K$-compressible or $K$-$\bdd$-compressible
in $(W_1, K)$
by Lemma 3.1 in \cite{GH2}.
%\ref{lem:SurfaceIn20}.
 In the former case,
performing a $K$-compressing operation on $A$,
we obtain two disks bounded by $H_1 \cap H_2 = l_1 \cup l_2$.
 The knot $K$ is in the ball between these disks,
which contradicts that $K$ is a core of $W_1$.
 In the latter case,
$A$ has a $K$-$\bdd$-compressing disk $R$.
 We assume first
that the arc $\bdd R \cap H_2$ is contained in $A_2$.
 Performing a $K$-$\bdd$-compressing operation on $A$ along $R$,
we obtain a peripheral disk
which cuts off a ball containing $K$ from $W_1$.
 This is again a contradiction.
 Hence $R$ is contained in $V_2$.
 We can isotope $R$
so that it is disjoint from the copy of $D$ in $A$.
 Then $R$ gives a $K$-$\bdd$-compressing disk of $A_{11}$.
 Because the arc $\bdd R \cap A$ is essential in $A$
so is the arc $\bdd R \cap A_{11}$ in $A_{11}$.
% is essential in $A_{11}$
 Thus the condition (E) holds.
\end{proof}

\begin{lemma}\label{lem:2b-iFiiFiGiiG}
% We consider case (2)(b).
 In Cases $(i)(F)$ and
 %(ii)(F)$,
 $(i)(G)$,
 % and (ii)(G),
%the $(2,0)$-splitting 
$H_2$ is weakly $K$-reducible.
\end{lemma}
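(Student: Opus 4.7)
The plan is to exhibit $H_2$ as weakly $K$-reducible by producing disjoint $K$-compressing (or meridionally compressing) disks on the two sides of $H_2$. A single construction on the $W_2$ side works uniformly from condition (i), while condition (F) versus (G) dictates two different constructions on the $W_1$ side.

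From condition (i), let $C \subset V_1 \cap W_2$ be a compressing disk of the annulus $A_{12}$. Compressing $A_{12}$ along $C$ produces two disks $D'_1, D'_2 \subset V_1 \cap W_2$ with $\bdd D'_i = l_i$; since $K \subset W_1$ and $l_i$ is $K$-essential in $H_2$, each $D'_i$ is a $K$-compressing disk of $H_2$ in $W_2$. From condition (F) or (G), let $E \subset V_2 \cap W_1$ be the $K$-$\bdd$-compressing disk of $A_{11}$ whose arc $\alpha := \bdd E \cap A_{11}$ is inessential in $A_{11}$, cutting off a subdisk $\delta \subset A_{11}$; without loss of generality, $\bdd \delta \cap H_1 \subset l_1$.

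In Case (F), where $|\delta \cap K| = 1$, the disk $M := \delta \cup E$, pushed slightly off $A_{11}$ into $V_2 \cap W_1$, is a meridionally compressing disk of $H_2$ in $W_1$ whose boundary lies in $H'_2$ in a neighborhood of $l_1$; in particular $\bdd M$ is disjoint from $l_2 = \bdd D'_2$, and the pair $(M, D'_2)$ certifies that $H_2$ is weakly $K$-reducible.

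In Case (G), where $|\delta \cap K| = 2$, the disk $\delta \cup E$ no longer serves as a meridionally compressing disk, so I instead consider the complementary piece of the $\bdd$-compression: the surface $A' := (A_{11} \setminus \delta) \cup E$, pushed slightly into $V_2 \cap W_1$, is an annulus properly embedded in $W_1$, disjoint from $K$, with boundary $l_2 \cup l'_1$, where $l'_1 \subset H'_2$ is a loop obtained from $\beta := \bdd E \cap H_2$ together with the sub-arc of $l_1$ complementary to $\bdd \delta$. By Lemma~3.1 in \cite{GH2}, $A'$ is either $K$-compressible or $K$-$\bdd$-compressible in $(W_1, K)$. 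A $K$-compression of $A'$ yields a disk in $W_1$ disjoint from $K$ bounded by $l_2$ or $l'_1$, which pairs with $D'_1$ or $D'_2$ to give weak $K$-reducibility. If $A'$ is only $K$-$\bdd$-compressible, a sub-case analysis on which of $V_1, V_2$ contains the $\bdd$-compressing disk $R$ and to which of $l_2, l'_1$ it is incident --- modeled on Lemma~\ref{lem:2a-A_12incompressible} --- promotes $R$ to a $K$-compressing or meridionally compressing disk of $H_2$ in $W_1$ whose boundary avoids one of $l_1, l_2$. The main obstacle is exactly this sub-case analysis in (G): the configuration in which $R \subset V_2$ is incident to the newly created loop $l'_1$ will require the most care, since there I must track the effect of the isotopy producing $A'$ from $A_{11}$ to rule out accidentally producing a merely stabilized (rather than weakly $K$-reducible) configuration, invoking Proposition~\ref{thm:koba} after pushing $H_1$ into $\mathrm{int}\, W_1$.
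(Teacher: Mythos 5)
Your overall strategy coincides with the paper's: compress $A_{12}$ to get the $W_2$-side disks $D'_1,D'_2$ bounded by $l_1,l_2$; in Case (F) push $\delta\cup E$ off $A_{11}$ to get a meridionally compressing disk in $W_1$ and pair it with one of the $D'_i$; in Case (G) keep the complementary annulus $A'=(A_{11}\setminus\delta)\cup E$, which is disjoint from $K$, and apply Lemma~3.1 of \cite{GH2} to it. Up to that point the argument is correct and is the paper's argument.

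The gap is in the last step of Case (G), which you yourself flag as ``the main obstacle'' and leave unresolved. Two problems there. First, the sub-case analysis you propose (``to which of $l_2$, $l'_1$ the $\bdd$-compressing disk is incident'') is modelled on Lemma~\ref{lem:2a-A_12incompressible}, where the relevant surface is disconnected; but $A'$ is an \emph{annulus}, so by the definition of $\bdd$-compressing disk its arc on $A'$ is essential and hence joins the two boundary circles --- there is no ``incident to one loop'' dichotomy, and the $\bdd$-compression produces a single disk. Second, your fallback for the hard configuration --- push $H_1$ into $\mathrm{int}\,W_1$ and invoke Proposition~\ref{thm:koba} --- is not available here: isotoping $H_1$ into $\mathrm{int}\,W_1$ uses the parallelism of $A_{12}$ with $A_2$, which is the situation of Case (iii), whereas you are in Case (i), where $A_{12}$ is compressible in $W_2$ and no such parallelism is given. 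The correct (and simpler) closing move, which is what the paper does, is: since $A'$ is disjoint from $K$, the $\bdd$-compression yields a single disk $E_2$ in $W_1$ disjoint from $K$; its boundary is the band sum of the two components of $\bdd A'$, and this loop is essential in $H_2$ because $\bdd E\cap H_2$ is an essential arc of $H'_2$, so the two components of $\bdd A'$ are not parallel in $H_2$. Pushing $E_2$ slightly off $A'$ makes $\bdd E_2$ disjoint from $l_1$ (and $l_2$), and $E_2$ together with $D'_1$ exhibits $H_2$ as weakly $K$-reducible. With that substitution your proof closes; without it, the final sub-case is not established.
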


\begin{proof}
 In Case (i),
% and (ii),the annulus
$A_{12}$ is compressible in $W_2$.
 Compressing $A_{12}$,
we obtain two disks $D_1$ and $D_2$ ($\subset V_1$)
bounded by the loops $l_1$ and $l_2$
respectively.

 In Case (F),
%there is a $K$-$\bdd$-compressing disk $D$ of $A_{11}$ in $V_2$
%such that the arc $\bdd D \cap A_{11}$ cuts off a disk $D'$
%from $A_{11}$
%and
%that $D'$ intersects $K$ in a single point.
%Performing 
a $K$-$\bdd$-compressing operation on a copy of $A_{11}$ along $D$
yields a meridionally compressing disk $Q$ of $H_2$
in $(W_1, K)$.
 We can isotope $Q$ slightly off of $A_{11}$.
 Then the disks $D_1$ and $Q$ show
that $H_2$ is weakly $K$-reducible.

 In Case (G),
there is a $K$-$\bdd$-compressing disk $R$ of $A_{11}$ in $V_2$
such that the arc $\bdd R \cap A_{11}$ is inessential in $A_{11}$
and cuts off a disk $R'$ from $A_{11}$
and that $R'$ intersects $K$ in two points.
% By Definition \ref{def:bdd-comp},
 By the definition of $K$-$\bdd$-compressing disks,
the arc $\bdd R \cap H_2$ is an essential arc in $H'_2$.
%rather than in $A_2$.
% Hence $R$ is contained in the solid torus $V_2$.
 Performing a $K$-$\bdd$-compressing operation on $A_{11}$ along $R$,
we obtain an annulus $A$ which is disjoint from $K$.
 Note that one component of $\bdd A$ is $l_1$ or $l_2$, say $l_1$,
and the other component is not parallel to $l_1$ in $H_2$.
%by the unusual definition of $K$-$\bdd$-compressing disks.
 By Lemma 3.1 in \cite{GH2},
%\ref{lem:SurfaceIn20},
$A$ is $K$-compressible or $K$-$\bdd$-compressible in $(W_1, K)$.
 In the former case,
performing a $K$-compressing operation on $A$,
we obtain a disk $E_1$ bounded by $l_1$.
 Note that $E_1$ is disjoint form $K$.
 Then the disks $D_1$ and $E_1$ show
that $H_2$ is weakly $K$-reducible.
In the latter case,
performing a $K$-$\bdd$-compressing operation on $A$,
we obtain a disk $E_2$
such that it is disjoint form $K$
and that $\bdd E_2$ is essential in $H_2$.
 We can isotope $E_2$ slightly off of $A$,
and hence off of $l_1$.
 Then the disks $D_1$ and $E_2$ show
that $H_2$ is weakly $K$-reducible.
\end{proof}

\begin{lemma}\label{lem:2b-iiiF}
 In Case $(iii)(F)$,
$H_2$ is weakly $K$-reducible.
\end{lemma}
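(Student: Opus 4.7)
The plan is to prove that $A_{12}$ is $\bdd$-parallel to $A_2$ in $W_2$, use this to isotope $H_1$ into $\inter(W_1)$ keeping $K$ fixed, and then invoke Proposition~\ref{thm:koba} with a meridionally compressing disk built from the Case~$(F)$ hypothesis.

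First I would construct the meridionally compressing disk $Q$ of $H_2$ in $(W_1,K)$ exactly as in the proof of Lemma~\ref{lem:2b-iFiiFiGiiG}: letting $A'_{11}$ be the sub-disk of $A_{11}$ cut off by $\bdd D \cap A_{11}$ meeting $K$ in one point, the disk $Q := A'_{11} \cup D$ meets $K$ in a single point, and its boundary is the union of an arc on $\bdd A_{11}$ (say in $l_1$) and the arc $\bdd D \cap H_2 \subset H'_2$. After pushing $A'_{11}$ slightly off $A_{11}$ into $\inter(V_2 \cap W_1)$, we may assume $Q \subset \inter(V_2 \cap W_1)$ and $\bdd Q$ is an essential loop in $\inter(H'_2)$ lying close to $l_1$ and disjoint from $l_1 \cup l_2$.

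Next I would analyze the $3$-manifold $V_1 \cap W_2$, whose boundary is the torus $A_{12} \cup A_2$ glued along $l_1 \cup l_2$. The $\bdd$-compressing disk $E$ of Case~$(iii)$ is properly embedded in $V_1 \cap W_2$, and $\bdd E$ is the union of an essential arc of $A_{12}$ from $l_1$ to $l_2$ together with the arc $\bdd E \cap A_2$ from $l_2$ back to $l_1$. Hence $\bdd E$ is a simple closed curve on the boundary torus meeting each $l_i$ transversely once. Compressing along $E$ reduces this torus to a $2$-sphere, which bounds a ball in $M$ by irreducibility; so $V_1 \cap W_2$ is a solid torus with meridian $\bdd E$. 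The intersection counts $|\bdd E \cap l_i|=1$ force $l_1, l_2$ to be longitudes of this solid torus and $A_{12}, A_2$ to be complementary longitudinal annuli sharing their boundary, so $A_{12}$ is isotopic rel boundary through $V_1 \cap W_2$ to $A_2$; in particular $A_{12}$ is $\bdd$-parallel to $A_2$ in $W_2$.

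Realizing this parallelism as an ambient isotopy of $(M,K)$ supported in a small neighborhood of $V_1 \cap W_2$---disjoint from $K \subset W_1$ and, for a sufficiently small neighborhood, from the arc $t_1$ on the far side of $A_2$---pushes $A_{12}$ across $A_2$ into $V_1 \cap W_1$; a further small perturbation pushes $l_1 \cup l_2$ off $H_2$ into $\inter(W_1)$, producing a new position of $H_1$ contained entirely in $\inter(W_1)$. Since $A_{11}$ separates $W_1$ into $V_1 \cap W_1$ and $V_2 \cap W_1$, the moved $A_{12}$ lies on the $V_1 \cap W_1$ side of $A_{11}$ while $Q$ lies in $\inter(V_2 \cap W_1)$ on the opposite side, and $\bdd Q \subset \inter(H'_2)$ stays away from the perturbed $l_1, l_2$; hence $Q$ can be arranged disjoint from the new $H_1$. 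Proposition~\ref{thm:koba} applied with $W = W_1$ then gives either $M = S^3$ with $K$ the trivial knot, or $H_2$ is weakly $K$-reducible; in the first alternative $H_2$ is still (weakly) $K$-reducible by Propositions~4.3 and~4.6 in~\cite{GH2}, so conclusion~(b) of Theorem~\ref{thm:l=2} holds in either case.

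The main obstacle I anticipate is justifying the identification of $V_1 \cap W_2$ as a solid torus with $l_1, l_2$ as longitudes; this hinges on $\bdd E$ being a meridian of the torus-bounded submanifold $V_1 \cap W_2$ (via irreducibility of $M$) and on computing $|\bdd E \cap l_i|=1$ from the fact that $\bdd E \cap A_{12}$ and $\bdd E \cap A_2$ are essential arcs in their respective annuli.
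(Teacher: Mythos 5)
Your proof is correct and follows essentially the same route as the paper's: build the meridionally compressing disk $Q$ from the Case (F) $\bdd$-compression, push $H_1$ into $\inter\,W_1$ along the parallelism between $A_{12}$ and $A_2$ so that $H_1\cap Q=\emptyset$, and invoke Proposition \ref{thm:koba}. The only difference is that you supply an explicit justification (via the solid torus $V_1\cap W_2$) of the parallelism of $A_{12}$ and $A_2$, which the paper simply asserts as the meaning of Case (iii), and you handle the trivial-knot alternative of Proposition \ref{thm:koba} explicitly.
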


\begin{proof}
 In Case (iii),
$A_{12}$ is parallel to $A_2$ in $W_2$.
 In Case (F),
%there is a $K$-$\bdd$-compressing disk $D$ of $A_{11}$ in $V_2$
%such that from $A_{11}$ the arc $\bdd D \cap A_{11}$ cuts off
%a disk which intersects $K$ in a single point.
%Note that the arc $\bdd D \cap H_2$ has both endpoints
%in $l_1$ or $l_2$,
%and that this arc is essential in $H'_2$
%by Definition \ref{def:bdd-comp}.
%%the unusual definition of $K$-$\bdd$-compressing disk.
%% Hence it is contained in $H'_2$ rather than $A_2$,
%%and $D$ is contained in $V_2$.
 A $K$-$\bdd$-compressing operation on $A_{11}$
yields a meridionally compressing disk $Q$ of $H_2$.
 We can isotope $Q$ slightly off of $A_{11}$.
 We isotope $H_1$ in $(M, K)$
along the parallelism between $A_{12}$ and $A_2$,
so that $H_1$ is contained in $W_1$
and that $H_1$ is disjoint from the disk $Q$.
 Then Proposition \ref{thm:koba} shows
that $H_2$ is weakly $K$-reducible.
\end{proof}

\begin{lemma}\label{lem:2b-iiiG}
% We consider case (2)(b).
 In Case $(iii)(G)$, 
we can isotope $H_1$ in $(M, K)$
so that $H_1$ and $H_2$ intersect each other
transversely in two loops
which are $K$-essential both in $H_1$ and $H_2$
and of the pattern $(1)$ in $H_1$  in Figure \ref{fig:l=2}.
\end{lemma}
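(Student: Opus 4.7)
The plan is to use the $K$-$\partial$-compressing disk $R$ of $A_{11}$ in $V_2 \cap W_1$ from Case $(G)$, together with the parallelism of $A_{12}$ to $A_2$ in $W_2$ from Case $(iii)$, to isotope $H_1$ so that $H_1 \cap H_2$ becomes two loops of pattern $(1)$. Write $\alpha = \partial R \cap A_{11}$ and $\beta = \partial R \cap H_2 \subset H'_2$; by hypothesis $\alpha$ cuts off from $A_{11}$ a subdisk $R'$ containing both points of $K \cap H_1$, and $\beta$ is an essential arc in $H'_2$ with both endpoints on $l_1$. Let $c \subset l_1$ be the subarc with $\partial R' = \alpha \cup c$, and let $c'$ denote the complementary subarc of $l_1$.

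First I would push a regular neighborhood $N(\alpha)$ of $\alpha$ in $A_{11}$ across $R$ through $H_2$ into $V_2 \cap W_2$. This is a valid isotopy in $(M,K)$ because $N(\alpha)$ is disjoint from the interior of $R'$, where the two points of $K \cap H_1$ lie. Under this push, a $\partial$-compression analysis of $A_{11}$ along $R$ shows that $H_1 \cap W_1$ splits into a disk $Q$ (the modified $R'$, still containing both $K$-points, bounded on $H_2$ by $c \cup \beta'$ for a parallel copy $\beta'$ of $\beta$) and an annulus $A$ disjoint from $K$ (bounded on $H_2$ by $l_2$ together with a second loop $m_2 = c' \cup \beta''$). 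Since $\alpha$ is separating on $A_{11}$, the band surgery along $\beta$ a priori produces three intersection loops on $H_2$: $\partial Q$, $m_2$, and the unchanged $l_2$, with $H_1 \cap W_2$ becoming a pair-of-pants consisting of $A_{12}$ together with the pushed band.

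Second, I would use the parallelism $A_{12} \sim A_2$ in $W_2$ to reduce the three loops to two. The parallelism supplies a product region (pillow) in $V_1 \cap W_2$ between $A_{12}$ and $A_2$; since the pushed band lies in $V_2 \cap W_2$, this pillow remains intact after the first isotopy. Using the pillow, I would perform a further isotopy of $H_1$ that drags the annulus $A$ across $H_2$ via the sub-annulus of $H_2$ that cobounds $A$ with the pair-of-pants region, thereby cancelling $m_2$ and merging the pair-of-pants on the $W_2$ side into a single annulus. The result is that $H_1 \cap H_2 = \partial Q \cup l_2$: the loop $\partial Q$ is $K$-essential on $H_1$ (inessential on $H_1$, bounding a disk with both $K$-points) and essential on $H_2$, while $l_2$ remains essential on both. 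These two loops divide $H_1$ into a disk containing $K$, an annulus, and a torus with one hole, i.e., pattern $(1)$.

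The main obstacle is the second isotopy. The subtle point is to verify that the pillow provided by Case $(iii)$ interacts with $A$ in just the right way to cancel precisely the loop $m_2$, rather than $l_2$ or $\partial Q$, and that no new intersection loops are introduced during the cancellation. This requires a careful analysis of how the pushed band in $V_2 \cap W_2$ and the parallelism pillow in $V_1 \cap W_2$ together determine the admissible isotopies of $H_1$ in $(M,K)$, and in particular confirming that the final configuration lies in pattern $(1)$ rather than in some pattern with one loop or with the wrong pieces assigned to $W_1$ and $W_2$.
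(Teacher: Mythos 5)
Your first step coincides with the paper's: isotope $H_1$ along the $K$-$\partial$-compressing disk $R\subset V_2\cap W_1$ of $A_{11}$, so that $H_1\cap W_1$ becomes a disk $Q$ meeting $K$ in two points plus an annulus $A$ disjoint from $K$, $H_1\cap W_2$ becomes a pair of pants $P$, and $H_1\cap H_2$ becomes the three loops $\partial Q$, $m_2$, $l_2$. Your second step, however, has a genuine gap, and moreover its stated target is impossible. First, the move itself: you cannot ``cancel'' the single loop $m_2$ by dragging $A$ across $H_2$, because $\partial A=m_2\cup l_2$ and the complementary regions of $H_2$ after the first isotopy are two pairs of pants (namely $P'=A_2\cup N(\beta)$ with boundary $\partial Q\cup m_2\cup l_2$, and $\mathrm{cl}(H_2'-N(\beta))$); there is no embedded sub-annulus of $H_2$ disjoint from $\partial Q$ cobounded by $m_2$ and $l_2$, and no parallelism of $A$ into $H_2$ is established. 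Pushing all of $A$ across would delete both $m_2$ and $l_2$ and leave a single intersection loop, not two. Second, the target: the configuration $H_1\cap H_2=\partial Q\cup l_2$ with ``$l_2$ remaining essential in $H_1$'' is never of pattern $(1)$. In pattern $(1)$ both loops are inessential and mutually parallel in the torus $H_1$ (one bounds the disk, the other is parallel to it across the annulus); since $l_2$ is non-separating in $H_1$ (we are in pattern $(2)$), the pair $\partial Q\cup l_2$ cuts $H_1$ into a disk and a pair of pants, not into a disk, an annulus and a once-holed torus.

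The paper's second step is different and is the point you are missing: the parallelism $A_{12}\sim A_2$ of Case $(iii)$ persists after the first isotopy, making $P$ parallel to $P'$ in $W_2$; hence there is a $\partial$-compressing disk of $P$ in $W_2$ whose arc in $H_2$ lies in $P'$ and \emph{joins} $m_2$ to $l_2$. Isotoping $H_1$ along this disk does not cancel a loop but \emph{merges} $m_2$ and $l_2$ into a single new loop: $P$ becomes an annulus in $W_2$ and $A$ becomes a once-holed torus $T$ in $W_1$ with $\partial T$ parallel to $\partial Q$ in $H_1$. The resulting two loops $\partial Q\cup\partial T$ are both inessential in $H_1$ and give pattern $(1)$. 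So the parallelism of Case $(iii)$ is used to produce a second boundary compression on the $W_2$ side, not to push $A$ across $H_2$.
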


\begin{proof}
 In Case (iii),
$A_{12}$ is parallel to $A_2$ in $W_2$.

In Case (G),
%there is a $K$-$\bdd$-compressing disk $D$ of $A_{11}$ in $V_2$
%such that from $A_{11}$ the arc $\bdd D \cap A_{11}$
%cuts off a disk
%which intersects $K$ in precisely two points.
% Note that the arc $\bdd D \cap H_2$
%has both endpoints in $l_1$ or $l_2$,
%and that this arc is essential in $H'_2$
%by Definition \ref{def:bdd-comp}.
%%the unusual definition of $K$-$\bdd$-compressing disk.
% Hence it is contained in $H'_2$ rather than $A_2$,
%and $D$ is contained in $V_2$.
we isotope $H_1$ along the $K$-$\bdd$-compressing disk of $A_{11}$.
 Then the annulus $A_{11} = H_1 \cap W_1$ is deformed
into a disjoint union of an annulus $A$ and a disk $Q$
such that $A$ is disjoint from $K$
and $Q$ intersects $K$ in two points.
 Note that each loop of $\bdd Q$ and $\bdd A$ is essential in $H_2$.
 The annulus $A_{12} = H_1 \cap W_2$ is deformed
into a disk with two holes $P$
and also the annulus $A_2 = H_2 \cap V_1$
into a disk with two holes $P'$.
 Since $A_{12}$ is parallel to $A_2$ in $W_2$,
$P$ is parallel to $P'$ in $W_2$.

 There is a $\bdd$-compressing disk $R$ of $P$ in $W_2$
such that the arc $\bdd R \cap H_2$
is contained in $P'$ and connects the two boundary loops $\bdd A$.
 We further isotope $H_1$ along $R$.
 Then $P$ is deformed into an annulus,
and $A$ is deformed into a torus with one hole $T$.
 Note that $\bdd T$ is parallel to $\bdd Q$
since the arc $\bdd R \cap H_2$ is contained in $P'$.
 Thus we have isotoped $H_1$
so that $H_1$ and $H_2$ intersect each other
transversely in two loops
which are $K$-essential both in $H_1$ and in $H_2$
and of the pattern (1) in $H_1$ in Figure \ref{fig:l=2}.
\end{proof}

\begin{lemma}\label{lem:2b-A}
 In Case $(A)$, one of the two conditions below holds.
\begin{enumerate}
\renewcommand{\labelenumi}{(\theenumi)}
\item
The $(2,0)$-splitting $H_2$ is weakly $K$-reducible.
\item
$(iii)(E)$, $(iii)(F)$ or $(iii)(G)$ holds.
\end{enumerate}
\end{lemma}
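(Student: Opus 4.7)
The plan is to use the $K$-compressing disk $D$ of $A_{11}$ given by Case $(A)$ -- whose boundary loop is essential in $A_{11}$ -- to compress $A_{11}$, yielding two disks $F_1, F_2 \subset W_1$ with $\bdd F_i = l_i$. Since $|A_{11} \cap K| = 2$, the counts satisfy $|F_1 \cap K| + |F_2 \cap K| = 2$, so after relabelling we may assume $|F_1 \cap K| \le 1$; then $F_1$ is a $K$-compressing or meridionally compressing disk of $H_2$ in $(W_1, K)$ with $\bdd F_1 = l_1$. The rest of the argument searches for a $K$-compressing or meridionally compressing disk of $H_2$ in $(W_2, \emptyset)$ whose boundary loop can be arranged disjoint from $l_1$; paired with $F_1$ it would give weak $K$-reducibility of $H_2$.

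For subcases $(i)$ and $(ii)$, a compressing disk of $A_{12}$ in $W_2$ gives, after compression, two disks $E_1, E_2 \subset W_2$ bounded by $l_1, l_2$ and disjoint from $K$; pairing $F_1$ with $E_2$ immediately yields weak $K$-reducibility. For subcase $(iv)$, a $\bdd$-compression of $A_{12}$ along the $\bdd$-compressing disk in $V_2 \cap W_2$ produces a disk in $W_2$; this disk is essential because the only alternative -- that $A_{12}$ be $\bdd$-parallel in $W_2$ through $V_2$ to a subannulus of $H'_2$ with boundary $l_1 \cup l_2$ -- is ruled out, since $H'_2$ is a torus with two holes. A small isotopy in $H_2$ pushes the boundary loop off $l_1$ keeping it essential in $H_2$, and pairing with $F_1$ gives weak $K$-reducibility.

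The genuine obstacle is subcase $(iii)$: here the $\bdd$-compressing disk $R$ of $A_{12}$ lies in $V_1 \cap W_2$ with $\bdd R \cap H_2$ an essential arc $\beta$ in $A_2$, and the $\bdd$-compressed disk has boundary bounding $A_2 \setminus N(\beta)$ in $H_2$, making it boundary-parallel in $W_2$. Consequently $A_{12}$ is forced to be parallel to $A_2$ in $W_2$ through $V_1 \cap W_2$. The plan is then to isotope $H_1$ into $\inter W_1$ along this parallelism; when a suitable push-off of $F_1$ can be arranged disjoint from the isotoped $H_1$, Proposition~\ref{thm:koba} gives weak $K$-reducibility of $H_2$. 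Otherwise -- the delicate situation where $F_1$ meets the isotoped $H_1$ essentially -- we use $D$ in $W_1$ together with $R$ in $W_2$ through the parallelism region to extract a $K$-$\bdd$-compressing disk of $A_{11}$ in $V_2$, and classify it by how its boundary arc meets $A_{11}$: essential in $A_{11}$ gives case $(E)$, inessential cutting off from $A_{11}$ a sub-disk meeting $K$ in one or two points gives case $(F)$ or $(G)$ respectively.

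The main obstacle is the subcase $(iii)$ construction of the $K$-$\bdd$-compressing disk of $A_{11}$ situated in $V_2$: one must combine the given disks $D$ and $R$ in a way that routes the resulting disk through $V_2$ rather than $V_1$, and verify that the construction does not yield the excluded case $(D)$ (a $K$-$\bdd$-compressing disk in $V_1$), as this would fall outside the stated conclusion.
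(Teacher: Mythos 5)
Your handling of subcases (i), (ii) and (iv) matches the paper: compress $A_{11}$ along the Case (A) disk to get a disk $D_1\subset W_1$ bounded by $l_1$ meeting $K$ in at most one point, and pair it with the disk obtained by compressing or $\bdd$-compressing $A_{12}$ in $W_2$. The problem is subcase (iii), which is exactly where conclusion (2) has to come from, and there your argument has a genuine gap that you yourself flag: you never actually produce the $K$-$\bdd$-compressing disk of $A_{11}$ lying in $V_2$. The proposed mechanism --- ``combine $D$ in $W_1$ with $R$ in $W_2$ through the parallelism region and route the result through $V_2$'' --- is not a construction: $R$ lives in $V_1\cap W_2$ and $D$ may lie in $V_1\cap W_1$, so there is no reason a disk assembled from them should be situated in $V_2$ at all, and no criterion is given that would exclude landing back in case (D). Likewise, the dichotomy ``$F_1$ can/cannot be pushed off the isotoped $H_1$'' is not a well-defined case split, and the second branch does not logically lead anywhere.

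The missing ingredient is Lemma 2.4 of \cite{GH2}: in subcase (iii), after noting that $A_{12}$ is parallel to $A_2$ in $W_2$, apply that lemma to $H'_2$ to conclude that $H'_2$ is $t_2$-compressible or $t_2$-$\bdd$-compressible in $(V_2,t_2)$. This hands you, for free, a disk that lives in $V_2$. If it is a $t_2$-compressing disk in $W_2$, pair it with $D_1$; if it is a $t_2$-compressing disk in $W_1$, isotope $H_1$ into $\inter W_1$ along the parallelism, make it disjoint from that disk, and apply Proposition \ref{thm:koba}; if it is a $t_2$-$\bdd$-compressing disk in $W_2$, it is a $\bdd$-compressing disk of $A_{12}$ in $V_2$, i.e.\ you are in case (iv), already done; and if it is a $t_2$-$\bdd$-compressing disk in $W_1$, then by the definition of $\bdd$-compressing disks it is a $K$-$\bdd$-compressing disk of $A_{11}$ contained in $V_2$, which is precisely condition (E), (F) or (G) and gives conclusion (2). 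Without this (or an equivalent two-sided compressibility statement for $H'_2$ in the solid torus $V_2$), the case on which the lemma's second alternative rests is unproved.
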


 We have already considered Cases (iii)(E), (iii)(F) and (iii)(G)
in Lemmas \ref{lem:2b-E}, \ref{lem:2b-iiiF} and \ref{lem:2b-iiiG}
respectively.

\begin{proof}
 Performing a $K$-compressing operation on $A_{11}$ 
along a $K$-compressing disk as in the condition (A),
we obtain a disk $D_1$ in $W_1$, 
which is bounded by $l_1$ or $l_2$, say $l_1$,
and intersects $K$ transversely
in at most one point.

 In Cases (i) and (ii),
performing a compressing operation on $A_{12}$,
we obtain a disk $D_2$
which is bounded by $l_2$.
 Hence the disks $D_1$ and $D_2$ show
that $H_2$ is weakly $K$-reducible.

In Case (iv),
performing a $\bdd$-compressing operation on $A_{12}$,
we obtain a disk $D'_2$
whose boundary loop $\bdd D'_2$ is essential in $H_2$
and is disjoint from $l_1$ after an adequate small isotopy.
 Thus the disks $D_1$ and $D'_2$ show
that $H_2$ is weakly $K$-reducible.

 In Case (iii),
the annulus $A_{12}$ is parallel to the annulus $A_2$
in $W_2$.
 By Lemma 2.4 in \cite{GH2},
%\ref{lem:H},
%the torus with two holes 
$H'_2$ is $t_2$-compressible or $t_2$-$\bdd$-compressible in $(V_2, t_2)$.
 In the former case,
let $R$ be a $t_2$-compressing disk of $H'_2$.
If $R$ is contained in $W_2$,
then the disks $D_1$ and $R$ show
that $H_2$ is weakly $K$-reducible.
 Therefore we may assume that $R$ is contained in $W_1$.
 We can isotope $H_1$ in $(M, K)$
along the parallelism between the annuli $A_{12}$ and $A_2$
so that $H_1$ is contained in $W_1$
and so that it is disjoint from $R$.
 Then Proposition \ref{thm:koba} shows
that $H_2$ is weakly $K$-reducible.
 We consider the latter case,
where $H'_2$ has a $t_2$-$\bdd$-compressing disk $R'$
in $(V_2, t_2)$.
If $R'$ is contained in $W_2$,
it is also a $\bdd$-compressing disk of $A_{12}$ in $W_2$
by the definition of $\bdd$-compressing disks.
%Definition \ref{def:bdd-comp}.
%the unusual definition of $t_2$-$\bdd$-compressing disks.
 We have already considered this case
in the third paragraph in this proof.
 Hence we can assume that $R'$ is contained in $W_1$.
 Then $R'$ is also a $K$-$\bdd$-compressing disk of $A_{11}$ in $(W_1, K)$
by the definition of $\bdd$-compressing disk.
%by Definition \ref{def:bdd-comp}.
%the unusual definition of $t_2$-$\bdd$-compressing disks.
 Thus one of the conditions (E), (F) and (G) holds.
% Note that $R'$ is contained in $V_2$ rather than in $V_1$
%since it is a $t_2$-$\bdd$-compressing disk of $H'_2$.
% We obtain the conclusion (2) of this lemma.
\end{proof}

\begin{lemma}\label{lem:2b-iiB}
% We consider case (2)(b).
In Case $(ii)(B)$,
and hence, also in Case $(ii)(D)$,
%the $(1,1)$-splitting $H_1$ is $K$-reducible,
$H_2$ is weakly $K$-reducible.
\end{lemma}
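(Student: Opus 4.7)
\medskip\noindent\textbf{Proof sketch (plan).}
The plan is to combine the hypotheses of Cases (ii) and (B) to exhibit disjoint essential disks on opposite sides of $H_2$.  Case (ii)(D) reduces to (ii)(B) via Lemma \ref{lem:2b-D}, so we restrict to (ii)(B).

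From Case (ii), compressing $A_{12}$ along its compressing disk in $V_2$ yields two disks $D_1, D_2 \subset V_2 \cap W_2$ with $\bdd D_i = l_i$.  From Case (B), there is a $K$-compressing disk $E \subset V_1$ of $A_{11}$ whose boundary $\bdd E$ is inessential in $A_{11}$, bounding a subdisk $E'\subset A_{11}$.  The crucial step is a parity argument applied to the $2$-sphere $E\cup E'$: because this sphere lies inside the solid torus $V_1$, it bounds a ball in $V_1$ and is therefore null-homologous in $M$, so its algebraic intersection number with $K$ vanishes.  Since $E\cap K=\emptyset$, this forces $|E'\cap K|$ to be even; essentiality of $\bdd E$ in $A_{11}\setminus K$ rules out the value $0$; and since $|K\cap A_{11}|=2$, the disk $E'$ must contain both points of $K\cap A_{11}$.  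Consequently the $K$-compression of $A_{11}$ along $E$ produces an annulus $A \subset V_1 \cap W_1$ disjoint from $K$ with $\bdd A = l_1 \cup l_2$ (together with a superfluous $2$-sphere that we discard).

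It remains to apply Lemma 3.1 of \cite{GH2} to $A$ in the pair $(W_1, K)$, which splits the argument into two sub-cases.  If $A$ has a $K$-compressing disk $F$, then $\bdd F$ is essential in $A$ and hence parallel to $l_1$; compression yields two disks $G_1, G_2 \subset W_1 \setminus K$ with $\bdd G_i = l_i$, and the pair $(G_1, D_2)$ witnesses weak $K$-reducibility of $H_2$.  If instead $A$ has a $K$-$\bdd$-compressing disk $F$, the arc $\bdd F\cap A$ is essential in $A$ and spans between $l_1$ and $l_2$; the $\bdd$-compression produces a disk $G \subset W_1 \setminus K$ whose boundary is essential in $H_2$ and lies in a band neighborhood of $l_1 \cup l_2 \cup (\bdd F \cap H_2)$, and a small isotopy in $H_2$ makes $\bdd G$ disjoint from $l_1$, so that the pair $(G, D_1)$ witnesses weak $K$-reducibility.

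The main obstacle is the parity step: without forcing $E'$ to contain both points of $K \cap A_{11}$, the $K$-compression of $A_{11}$ along $E$ would leave behind a surface still meeting $K$, and the subsequent comparison with $D_1$ and $D_2$ would break down.  The argument is made possible by the fact that the auxiliary sphere $E\cup E'$ lies entirely inside the solid torus $V_1$, a feature specific to Case (B) which makes the null-homology (and hence the parity) automatic, regardless of whether $M$ is the $3$-sphere or a lens space.
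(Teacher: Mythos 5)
Your reduction of (ii)(D) to (ii)(B) via Lemma \ref{lem:2b-D} matches the paper, and your parity argument correctly forces the subdisk $E'\subset A_{11}$ to contain both points of $K\cap A_{11}$. But from there you diverge from the paper and your construction has a gap in the second sub-case. After compressing $A_{11}$ along $E$ you obtain an annulus $A$ disjoint from $K$ with $\bdd A=l_1\cup l_2=\bdd A_2$, and when $A$ is $K$-$\bdd$-compressible the arc $\bdd F\cap H_2$ may lie either in $H'_2$ or in $A_2$; you tacitly assume the resulting disk $G$ has boundary essential in $H_2$, which is true only in the first case. If $\bdd F\cap H_2$ is an essential arc of $A_2$, then $\bdd G=\bdd N(l_1\cup l_2\cup(\bdd F\cap H_2))\cap H'_2$... no: $\bdd G$ is the boundary of the complementary disk of $N(l_1\cup(\bdd F\cap H_2)\cup l_2)$ in $A_2$, hence bounds a disk in $A_2\subset H_2$ and is inessential, so $(G,D_1)$ proves nothing. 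Moreover, you cannot dismiss this configuration by the paper's ``$K$ trapped in a ball'' trick (used in Lemmas \ref{lem:2a-A_12compressible} and \ref{lem:2b-C}): there the compressing disk of $A_{11}$ lies in $V_2$ and $K$ ends up between $A$ and $A_2$, whereas here $E\subset V_1$, so $t_1$ lies in the ball bounded by $E\cup E'$ and $K$ lies on the side of $A$ \emph{away} from $A_2$. The bad configuration is exactly ``$A$ parallel to $A_2$ in $W_1$,'' which is perfectly consistent and yields neither a contradiction nor an essential disk; your argument simply stops there.

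The paper avoids all of this with a much shorter observation that you missed: the Case (B) disk lies in $V_1\cap W_1$ and is a $K$-compressing disk of the torus $H_1$ in $(V_1,t_1)$, while the Case (ii) compressing disk of $A_{12}$ lies in $V_2\cap W_2$ and is a $K$-compressing disk of $H_1$ in $(V_2,t_2)$; together they show the $(1,1)$-splitting $H_1$ is $K$-reducible. By Proposition 4.3 of \cite{GH2} the knot $K$ is then trivial, and by Proposition 4.6 of \cite{GH2} the splitting $H_2$ is weakly $K$-reducible (this is the argument referenced from the third paragraph of the proof of Lemma \ref{lem:2a-A_12compressible}). If you want to keep your direct construction, you must either handle the ``$A$ parallel to $A_2$'' sub-case separately (e.g.\ via the $K$-reducibility of $H_1$ or the meridional-stabilization machinery of Lemma \ref{lem:complement}) or switch to the paper's route outright.
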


\begin{proof}
 In Case (ii),
there is a compressing disk $D_2$ of $A_{12}$
in $W_2 \cap V_2$.
 In Case (B),
there is a $K$-compressing disk $D_1$ of $A_{11}$
in $W_1 \cap V_1$.
 Then these disks $D_1$ and $D_2$ show
that $H_1$ is $K$-reducible.
 Then $H_2$ is weakly $K$-reducible
as shown in the third paragraph 
in the proof of Lemma \ref{lem:2a-A_12compressible}.
 We have this lemma via Lemma \ref{lem:2b-D}.
\end{proof}

\begin{lemma}\label{lem:2b-ivD}
 In Case $(iv)(D)$,
%we can take a spine of $(V_1, t_1)$
%so that it is a union of a longitudinal loop of $W_2$
%and an arc connecting $\bdd W_1$ and $K$ in $W_1$.
%the conclusion $(6)$ of Theorem \ref{thm:2} holds.
the conclusion $(f)$ of Theorem \ref{thm:l=2} holds.
\end{lemma}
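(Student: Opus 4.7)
The plan is to realize conclusion (f) by constructing the separating disk $D_2$, the solid torus $U_1$, and the arc $\alpha$ explicitly from the two $\bdd$-compressing disks in hand, and then to verify that the resulting torus $\bdd N(U_1\cup\alpha)$ is isotopic to $H_1$ in $(M,K)$. To begin, since condition (D) implies condition (B) by Lemma~\ref{lem:2b-D}, performing the $t_1$-$\bdd$-compression of $A_2$ along $F$ inside $V_1\cap W_1$ yields a $K$-compressing disk $D_1\subset V_1\cap W_1$ of $A_{11}$ whose boundary is inessential in $A_{11}$ and bounds a sub-disk $A^*\subset A_{11}$. Because $K$ is a single knot, the sphere $D_1\cup A^*$ meets $K$ in an even number of points, forcing both points of $K\cap H_1$ to lie in $A^*$. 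By irreducibility of $M$ (which holds for $S^3$ and for lens spaces other than $S^2\times S^1$), this sphere bounds a $3$-ball $B\subset V_1\cap W_1$ containing the arc $t_1=K\cap V_1$.

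Next I would exploit $E$. After perturbing $E$ to be disjoint from $K$ (possible since $A_{12}$ is disjoint from $K$), I $\bdd$-compress $A_{12}$ along $E$ to obtain a disk $D'_2\subset W_2$ whose boundary is the band sum of $l_1$ and $l_2$ along the arc $\beta_1=\bdd E\cap H'_2$. Isotoping $H_1$ along $E$ in $(M,K)$ to a new torus $H_1'$, a direct computation gives $V_1'\cap H_2=A_2\cup N(\beta_1,H_2)$, a one-holed torus on $H_2$. Thus $\bdd D'_2$ is a separating loop on $H_2$ dividing it into two one-holed tori, so $D'_2$ is an essential separating disk in $W_2$ cutting it into two solid tori. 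I set $D_2:=D'_2$ and let $U_1$ be the solid torus on the side of $V_1'$. Since $t_1\subset B$ is $\bdd$-parallel in the $3$-ball $B$, a canceling disk for $t_1$ in $B$, extended through the cobordism $V_1\cap W_1\setminus B$, provides an arc $\alpha\subset W_1$ from an endpoint of $t_1$ on $K$ to a point $p\in V_1'\cap H_2\subset\bdd U_1$.

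The final step verifies $\bdd N(U_1\cup\alpha)\sim H_1$ in $(M,K)$ by combining two ambient isotopies: the $\bdd$-compression along $E$ flattens $H_1\cap W_2$ onto $D'_2$, and pushing $H_1\cap W_1$ across the cobordism $V_1\cap W_1\setminus B$ (an isotopy disjoint from $K$, since $K\cap V_1\subset B$) collapses $V_1\cap W_1$ to a regular neighborhood of $\alpha$. After these moves $V_1$ assumes the form $N(U_1\cup\alpha)$, yielding~(f). The main obstacle will be to show that $V_1\cap W_1\setminus B$ is genuinely a thickened annulus between $(A_{11}\setminus\inter A^*)\cup D_1$ and $A_2$ rather than a region with extra topology; this technical heart of the argument will rest on the irreducibility of $M$ and on the interplay of the two $\bdd$-compressing disks $E$ and $F$, paralleling the cobordism simplifications used in Lemmas~\ref{lem:2b-E} and~\ref{lem:2b-iiiG}.
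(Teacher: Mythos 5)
Your handling of the $W_2$ side is essentially the paper's: you $\bdd$-compress $A_{12}$ along the disk from condition (iv), check that the resulting disk $D_2$ separates $H_2$ into two once-punctured tori, and let $U_1$ be the solid torus containing $A_2$. But on the $W_1$ side you diverge, and that is where the proof breaks down. You convert (D) into (B) via Lemma \ref{lem:2b-D}, build the sphere $D_1\cup A^*$, and claim a ball $B\subset V_1\cap W_1$ containing $t_1$; you then need $\text{cl}((V_1\cap W_1)-B)$ to be a product region between the annulus $(A_{11}-\inter\,A^*)\cup D_1$ and $A_2$, and you explicitly defer this (``the main obstacle \dots this technical heart of the argument will rest on \dots''). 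That deferred step \emph{is} the content of the lemma: without it you have not shown that $V_1$ collapses onto $U_1\cup t_1\cup\alpha$, i.e.\ that $\bdd N(U_1\cup\alpha)$ is isotopic to $H_1$ in $(M,K)$, which is precisely conclusion (f). Two subsidiary points are also unaddressed: irreducibility only tells you the sphere $D_1\cup A^*$ bounds a ball on \emph{some} side (in a lens space you must rule out the side containing the core of $V_2$, e.g.\ by a $\pi_1$ argument; in $S^3$ both sides are balls and you must still identify $B$ with the side lying in $V_1\cap W_1$); and the disks you call $E$ and $F$ are never introduced.

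The paper avoids all of this by working with the $\bdd$-compressing disks directly rather than with the surgered sphere. It isotopes $H_1$ in $(M,K)$ along the $K$-$\bdd$-compressing disk $R\subset V_1\cap W_1$ of $A_{11}$, pushing the band $N(\bdd R\cap A_{11})$ onto $A_2$; after this move $H_1\cap\inter W_1$ is a single disk $R_1$ with boundary inessential on $H_2$, and Lemma 3.2 of \cite{GHY} then says $R_1$ cuts off a $3$-ball $X$ from $W_1$ meeting $K$ in a trivial arc $t=t_1$. Combined with the isotopy along the disk from (iv), one reads off $V_1=U_1\cup X$ with $H_1\cap H_2$ a twice-punctured torus, and $\alpha$ is simply the arc onto which $X$ collapses rel $t_1$. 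If you want to salvage your route, the clean fix is to observe that your ball $B$ is $\text{cl}((V_1\cap W_1)-N(R))$ and that the complementary region is a regular neighborhood of $A_2\cup R$, hence an $A_2\times I$; but as written the proposal stops short of establishing the isotopy that conclusion (f) requires.
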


\begin{proof}
 In Case (iv),
there is a $\bdd$-compressing disk $D$ of $A_{12}$ in $W_2$
such that $D$ is contained in $V_2$.
 Let $P=N(\bdd A_{12} \cup (A_{12} \cap \bdd D))$ be a neighborhood
of the union of the two boundary loops $\bdd A_{12}$
and the arc $A_{12} \cup \bdd D$ in $A_{12}$.
 Note that $P$ is the disk with two holes.
 We can isotope $H_1$ in $(M, K)$
along $D$
so that $P$ is isotoped into $H'_2$.
%and so that $P \cap V_1 = \bdd A_{12}$.
 After this isotopy,
$H_1$ intersects $\text{int}\,W_2$ in an open disk.
 Let $D_2$ be the closure of this open disk.
 Then 
%$D_2$ is an essential separating disk in $W_2$,
$\bdd D_2$ separates $H_2$ into the once punctured torus
$A_2 \cup (H'_2 \cap P)$ 
and the complementary once punctured torus,
and $D_2$ cuts $W_2$ into two solid tori,
one of which, say $U_1$, contains $A_2$.

 In Case (D),
there is a $K$-$\bdd$-compressing disk $R$ of $A_{11}$
in $(W_1, K)$
such that $R$ is contained in $V_1$.
 The arc $\beta=A_{11} \cap \bdd R$ is essential in $A_{11}$
ignoring the intersection points $K \cap A_{11}$
since the arc $A_2 \cap \bdd R$ is essential in $A_2$
by the definition of $K$-$\bdd$-compressing disk.
 Set $B=N(\beta)$,
the band neighborhood of the arc $\beta$ in $A_{11}$.
 We can isotope $H_1$ in $(M,K)$ along $R$
so that $B$ is isotoped into $A_2$
and that $B \cap P \subset \bdd A_2$.
 After this isotopy,
$H_1$ intersects $\text{int}\,W_1$ in an open disk.
 Let $R_1$ be the closure of this open disk.
 Since $\bdd R_1$ is inessential on $\bdd W_1=H_2$,
$R_1$ is a $\bdd$-parallel disk in $W_1$ ignoring $K$,
and cuts off a $3$-ball $X$ from $W_1$
such that $X$ intersects $K$ in a single arc $t$
which is trivial in $X$.
(See Lemma 3.2 in \cite{GHY}.)

 After these isotopies, 
$H_1$ intersects $H_2$ in the torus with two holes $P \cup B$.
 The solid torus $V_1$ is the union $U_1 \cup X$.
 Hence $t=t_1$.
 We take an arc $\alpha$ in the $3$-ball $X$
so that an endpoint of $\alpha$ in $\text{int}\,t$,
that the other endpoint of $\alpha$ is in the disk $X \cap H_2$
and so that $X$ collapses to $t \cup \alpha$.
 See Figure \ref{fig:Case3}.
 Thus we obtain the conclusion (f) of Theorem \ref{thm:l=2}.
% Thus we obtain the conclusion (6) of Theorem \ref{thm:2}.
% In addition,
%we take a loop $\rho_2$ in $\bdd V'_1 \cap H_2$
%so that $\rho_2$ is a longitude of the solid torus $V'_1$,
%so that $\rho_2 \cap Z$ is an arc
%and so that $\rho_2$ contains the point $Z \cap \bdd \rho_1$.
% Then $\rho_1 \cup \rho_2$ forms a spine of $(V_1, t_1)$.
\end{proof}

%\begin{figure}
%\centering
%\includegraphics[width=.7\textwidth]{Case3.ps}
%\caption{}
%\end{figure}

 We need the next lemma to consider Case (iii)(B).

\begin{lemma}\label{lem:complement}
Suppose that the handlebody $W_1$ contains a separating disk $D$
such that $D$ cuts off from $W_1$
a solid torus $U_1$ disjoint from the knot
and that the complementary $3$-manifold $\text{cl}\,(M-U_1)$
is also a solid torus.
% Then we can take a spine of $(W_1, K)$
%so that it is a $(1,1)$-tunnel.
 Then the $(2,0)$-splitting $H_2$ is meridionally stabilized,
and hence $H_2$ is weakly $K$-reducible
$($see Proposition 4.7 in \cite{GH2}\/$)$.
\end{lemma}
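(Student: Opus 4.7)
The plan is to recognize that the hypothesis exhibits the $(2,0)$-splitting as a meridional stabilization of a genus $1$ Heegaard splitting of $M$. Since both $U_1$ and $T := \mathrm{cl}(M - U_1)$ are solid tori, $M = U_1 \cup_{\bdd U_1} T$ is a genus $1$ Heegaard splitting of $M$, and $K$ lies entirely in $T$. The solid torus $W_1'$ sits in $T$ with $W_1' \cap \bdd T = D$, so $T$ decomposes as $T = W_1' \cup_{F_2} W_2$ along the once-punctured torus $F_2 := \bdd W_1' \setminus D$, which is glued to the once-punctured torus $F_1 := \bdd T \setminus D \subset H_2$ along $\bdd D$. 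Dually, $W_1 = U_1 \natural_D W_1'$ is a boundary connect sum of two solid tori; after absorbing a ball-collar of $D$ inside $W_1'$ into $U_1$, the remaining part of $W_1'$ is a trivial $1$-handle attached to $U_1$, exhibiting the $(2,0)$-splitting as a Heegaard-theoretic stabilization of the genus $1$ splitting of $M$.

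First, I would produce the standard stabilizing reducing pair. On the $W_1$ side, let $\beta$ be a meridian disk of the solid torus $W_1'$ with $\bdd \beta \subset F_2$ (push off $D$); this is a compressing disk of $H_2$ in $W_1$. On the $W_2$ side, since $\pi_1(T) = \mathbb{Z}$ while $\pi_1(F_2)$ is free of rank $2$, van Kampen applied to $T = W_1' \cup_{F_2} W_2$ forces $\pi_1(F_2) \to \pi_1(W_2)$ to fail $\pi_1$-injectivity, and the loop theorem produces a compressing disk $\gamma$ of $F_2$ in $W_2$. A standard minimization of the intersection $\beta \cap \gamma$ in the solid torus $T$, using innermost-disk arguments on $\beta \cap F_2$ and $\gamma \cap F_2$ and the classification of essential curves on the torus $\bdd W_1'$, arranges $|\bdd \beta \cap \bdd \gamma| = 1$ on $F_2 \subset H_2$, yielding the reducing pair that witnesses the Heegaard stabilization.

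The final and hardest step is to show $|\beta \cap K| = 1$, so that $\beta$ is a meridionally compressing disk of $H_2$ in $(W_1, K)$; equivalently, the winding number of $K$ in the solid torus $W_1'$ is $\pm 1$. The key point is that since $U_1$ is disjoint from $K$ and the $(2,0)$-splitting $(W_1, K) \cup_{H_2} (W_2, \emptyset)$ realizes $W_1$ as a regular neighborhood of $K$ together with an unknotting tunnel $\tau$, one can choose $\tau$ so that $\tau \cap W_1'$ collapses into $N(K)$, forcing $W_1'$ to be isotopic to $N(K)$ and $K$ to be a core of $W_1'$. The main obstacle will be carrying out this collapsing argument rigorously under the given hypothesis, and in particular ruling out higher winding number of $K$ in $W_1'$ using only the data that $U_1$ is a solid torus disjoint from $K$ and $T$ is a solid torus. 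Once this is established, the pair $(\beta, \gamma)$ certifies that $H_2$ is meridionally stabilized: $\beta$ is a meridionally compressing disk of $H_2$ in $(W_1, K)$, $\gamma$ is a compressing disk of $H_2$ in $W_2$, and $|\bdd \beta \cap \bdd \gamma| = 1$. Weak $K$-reducibility then follows from Proposition 4.7 of \cite{GH2} as cited.
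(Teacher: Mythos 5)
Your route is genuinely different from the paper's, and it has two gaps, one of which is fatal as written. The paper's proof runs as follows: $K$ is a core of the complementary solid torus $U_2=\text{cl}\,(W_1-U_1)$ (this is Lemma 3.3 of \cite{GHY}); one then tubes $U_1$ along a neighborhood $N(Q)$ of a meridian disk $Q$ of $U_2$ meeting $K$ once, producing a decomposition $(M,K)=(U'_1,s_1)\cup(U'_2,s_2)$ in which $U'_1=U_1\cup N(Q)$ is a solid torus meeting $K$ in a trivial arc $s_1$, and $s_2=\text{cl}\,(K-s_1)$ is an arc in the complementary solid torus $U'_2$ whose exterior is the handlebody $W_2$; Gordon's theorem \cite{Go} is then invoked to conclude that $s_2$ is a trivial arc in $U'_2$, and a meridian disk of $s_2$ together with a canceling disk of $s_2$ in $W_2$ is the meridionally stabilizing pair. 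Your proposal instead tries to exhibit the two disks directly from the splitting $T=W_1'\cup_{F_2}W_2$.

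The gaps are these. First, the $W_2$-side disk: your van Kampen argument does not establish that $\pi_1(F_2)\to\pi_1(W_2)$ fails to be injective, since van Kampen only gives a pushout and $\pi_1(F_2)\to\pi_1(W_1')\cong\mathbb{Z}$ already has a large kernel, so the collapse of $\pi_1(T)$ to $\mathbb{Z}$ could in principle be accounted for entirely on the $W_1'$ side. More seriously, even granted some compressing disk $\gamma$ of $F_2$ in $W_2$, ``standard minimization'' does not yield $|\bdd\beta\cap\bdd\gamma|=1$: two essential simple closed curves on the once-punctured torus $F_2$ can have arbitrary geometric intersection number, and nothing in your argument forces $\bdd\gamma$ to be dual to the meridian $\bdd\beta$ of $W_1'$. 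Producing a dual disk meeting $\bdd\beta$ in exactly one point is the entire content of ``stabilized,'' and it is precisely the step the paper extracts from the triviality of $s_2$ via Gordon's theorem; your sketch supplies no substitute for that input. Second, the claim $|\beta\cap K|=1$, i.e.\ that $K$ is a core of $W_1'$: you flag this yourself as unresolved, and the heuristic of collapsing the tunnel into $N(K)$ is not an argument (nothing you say excludes $K$ lying in $W_1'$ with winding number greater than one). The paper disposes of this by citing Lemma 3.3 of \cite{GHY}; without that lemma or a proof of it, your $\beta$ is not known to be a meridionally compressing disk of $H_2$ in $(W_1,K)$.
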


\begin{proof}
 The disk $D$ cuts off another solid torus $U_2$ from $W_1$.
 Note that $K$ forms a core of $U_2$ (Lemma 3.3 in \cite{GHY}).
 There is a meridian disk $Q$ of $U_2$
which intersects $K$ transversely in a single point.
 We can take $Q$
so that $\bdd Q$ intersects the disk $D$ in a single arc.
 Let $N(Q)$ be a small regular neighborhood of $Q$ in $U_2$.
 The solid torus $U'_1 = U_1 \cup N(Q)$
intersects $K$ in a trivial arc $s_1$.
 The $3$-ball $\text{cl}(U_2 - N(Q))$ forms
a regular neighborhood of the complementary arc
$s_2 = \text{cl}\,(K - s_1)$
in the complementary solid torus $U'_2 = \text{cl}\,(M - U'_1)$.
 The exterior of $s_2$ in $U'_2$
is homeomorphic to $W_2$.
 Hence we can see that the arc $s_2$ is trivial in $U'_2$,
applying Theorem 1 in \cite{Go}.
 Therefore $(M, K) = (U'_1, s_1) \cup (U'_2, s_2)$ is a $(1,1)$-splitting.
Moreover,
we can take a meridian disk $D_1$ of $s_2$
in $\text{cl}\,(U_2 - N(Q))$
and a canceling disk $D_2$ of $s_2$
in $\text{cl}\,(U'_2 - \text{cl}\,(U_2 - N(Q))) = W_2$
so that $\bdd D_1$ and $\bdd D_2$ intersects
transversely in a single point.
 These disks $D_1$ and $D_2$ show
that $H_2$ is meridionally stabilized.
\end{proof}

\begin{lemma}\label{lem:2b-iB}
 In Cases $(i)(B)$ and $(iii)(B)$, 
one of the following four conditions holds.
\begin{enumerate}
\renewcommand{\labelenumi}{(\theenumi)}
\item
 The $(2,0)$-splitting $H_2$ is weakly $K$-reducible.
\item
 The $(1,1)$-splitting $H_1$ has a satellite diagram.
\item
 One of the conditions $(E)$, $(F)$ and $(G)$ holds.
\item
 The conditions $(iv)$ and $(D)$ hold.
\end{enumerate}
\end{lemma}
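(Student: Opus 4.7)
The plan is to use the $K$-compressing disk from condition $(B)$ to build a torus in the knot exterior, then exploit the extra structure from $(i)$ or $(iii)$ to analyze it. Let $D \subset V_1 \cap W_1$ be the $K$-compressing disk of $A_{11}$ whose boundary loop $\bdd D$ is inessential on $A_{11}$. Then $\bdd D$ bounds a disk $D' \subset A_{11}$, and because $\bdd D$ is $K$-essential on $A_{11}$, the disk $D'$ contains both points of $K \cap A_{11}$. The sphere $S = D \cup D'$ meets $K$ in two points; since $M$ is irreducible, $S$ bounds a $3$-ball $B$ in $M$. Because $K \cap D = \emptyset$ and the endpoints of $t_1$ lie in the interior of $D'$, the arc $t_1$ lies entirely in $B$. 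Write $A = \mathrm{cl}(A_{11} \setminus D') \cup D$ for the annulus obtained by $K$-compressing $A_{11}$ along $D$, and set $B' = \mathrm{cl}((V_1 \cap W_1) \setminus B)$. Then $A$ is properly embedded in $V_1 \cap W_1$ with $\bdd A = l_1 \cup l_2$, is disjoint from $K$, and the frontier $T = A \cup A_2$ of $B'$ is a torus in $M - K$.

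Next, I would show that in each case $V_1 \cap W_2$ has an explicit solid torus structure: in $(i)$ because $A_{12}$ is compressible in $V_1 \cap W_2$, so the torus $A_{12} \cup A_2 = \bdd (V_1 \cap W_2)$ is compressible; in $(iii)$ because the $\bdd$-compressing disk of $A_{12}$ witnesses a parallelism between $A_{12}$ and $A_2$ inside $V_1 \cap W_2$. I would then split on whether the torus $T$ is essential in $M - K$.

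If $T$ compresses on the $B'$-side, a compressing disk of $A$ combined with a compressing disk of $A_{12}$ in case $(i)$, or with the parallelism and a suitable auxiliary disk in case $(iii)$, yields, after an isotopy of $H_1$ through the product region, a configuration to which Proposition \ref{thm:koba} applies, producing conclusion $(1)$. If $T$ compresses on the side opposite $B'$, the compressing disk lies in $B \cup (V_1 \cap W_2) \cup V_2$; interpreting it through the $K$-compression along $D$ produces either a $K$-$\bdd$-compressing disk of $A_{11}$ in $V_2$ of type $(E)$, $(F)$ or $(G)$, giving conclusion $(3)$, or else a $\bdd$-compressing disk of $A_{12}$ in $V_2$ that, together with a $K$-$\bdd$-compressing disk of $A_{11}$ in $V_1$ extracted from the same data, realizes conditions $(iv)$ and $(D)$, giving conclusion $(4)$. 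If $T$ is incompressible and not $\bdd$-parallel in $M - K$, then $T$ is essential, and I would show that $T$, together with the ball $B$ containing $t_1$ and the solid torus $V_1 \cap W_2$, realizes the definition of a satellite diagram for the $(1,1)$-splitting $H_1$, giving conclusion $(2)$.

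The main obstacle will be the satellite-diagram case: rather than merely concluding that $K$ is a satellite knot, I must verify that the essential torus $T$ can be isotoped so that it meets $H_1$ in a way compatible with the technical definition of a satellite diagram in \cite{GH2}, using the loops $l_1, l_2$ as the trace of $T$ on $H_1$. A secondary difficulty is the bookkeeping in the compressible subcases, where each compressing or $\bdd$-compressing disk must be tracked through the four regions $V_i \cap W_j$, keeping careful account of which boundary arcs lie in $A_2$, in $H'_2$, or in $A_{11}$ versus $A_{12}$, in order to produce disks with disjoint boundaries on opposite sides of $H_2$.
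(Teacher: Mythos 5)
Your proposal is a plan rather than a proof, and it diverges from the paper's argument in a way that leaves genuine gaps. The paper does not organize the case analysis around compressibility of the torus $T = A \cup A_2$ in the knot exterior. Instead it repeatedly invokes the dichotomy of Lemma 2.4 in \cite{GH2}: first $H'_2$ is $t_2$-compressible or $t_2$-$\bdd$-compressible in $(V_2,t_2)$, the resulting disk is located in $W_1$ or $W_2$, and in the $W_2$ subcase a second compression produces an annulus $Z'$ to which Lemma 2.4 is applied again; the satellite diagram of conclusion (2) arises only when the $t_2$-$\bdd$-compressing disk $Q$ of $Z'$ has its arc in $A_{11}$, at which point the two disks $R\subset V_1$ and $Q'\subset V_2$, each bounding a disk in $A_{11}$ meeting $K$ twice and each admitting a disjoint canceling disk of $t_i$, realize the \emph{definition} of a satellite diagram. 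Your route to conclusion (2) --- ``$T$ essential implies satellite diagram'' --- is the most serious gap, and you flag it yourself as the main obstacle without resolving it: an essential torus in the exterior would at best give that $K$ is a satellite or torus knot, which is not what conclusion (2) asserts; a satellite diagram is a structure on the splitting torus $H_1$ requiring explicit essential disks on both sides together with canceling disks, and your torus $T$ does not supply the $V_2$-side disk.

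There are further concrete problems. Your trichotomy on $T$ omits the case where $T$ is incompressible but boundary-parallel in $M-K$. Your derivation of conclusion (4) (``a $K$-$\bdd$-compressing disk of $A_{11}$ in $V_1$ extracted from the same data'') is not an argument; in the paper, condition (iv) comes from the parallelism of $Z'$ with $A_{12}$ plus an explicit $\bdd$-compressing disk $G$, while condition (D) comes from a separately constructed $K$-$\bdd$-compressing disk $C$ of the annulus $A$ whose arc lies in $A_2$. Finally, your sketch has no place for the subcase in Case (iii) where the $K$-$\bdd$-compressing disk of $A$ has its arc in $H'_2$; the paper handles this by isotoping $H_1$ ignoring $K$ and invoking Lemma \ref{lem:complement} to conclude that $H_2$ is meridionally stabilized, an argument that does not appear in your outline. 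As written, the proposal cannot be completed along the lines you indicate without importing essentially all of the paper's machinery.
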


 We have already considered the cases 
(i)(E), (i)(F), (i)(G), 
(iii)(E), (iii)(F), (iii)(G) 
and (iv)(D)
in Lemmas \ref{lem:2b-E}, \ref{lem:2b-iFiiFiGiiG}, 
\ref{lem:2b-iiiF}, \ref{lem:2b-iiiG}
and \ref{lem:2b-ivD}.
 In the case of the conclusion (2), 
we have the conclusion (b), (c), (d) or (e) of Theorem \ref{thm:l=2}
by Proposition 4.9 in \cite{GH2}.
% See Theorem III in \cite{Hy2}.

\begin{proof}
 In Case (i),
there is a compressing disk $D$ of $A_{12}$ in $V_1$.
 Compressing a copy of $A_{12}$ along $D$,
we obtain disks $D_1$ and $D_2$
bounded by the intersection loops $H_1 \cap H_2 = l_1 \cup l_2$.
 Note that these disks $D_1$ and $D_2$ are contained in $W_2 \cap V_1$,
and form compressing disks of $A_2$.

 In Case (iii),
the annulus $A_{12}$ is parallel to the annulus $A_2$ in $W_2$.

 In Case (B),
there is a $K$-compressing disk $R$ of $A_{11}$ in $V_1$
such that $\bdd R$ bounds in $A_{11}$ a disk $R'$
which intersects $K$ in precisely two points.
 Compressing a copy of $A_{11}$ along $R$,
we obtain an annulus $A$ which is disjoint from $K$.

 The surface $H'_2$ is 
$t_2$-compressible or $t_2$-$\bdd$-compressible in $(V_2, t_2)$
by Lemma 2.4 in \cite{GH2}.
%\ref{lem:H}.
 Suppose first
that $H'_2$ has a $t_2$-compressing or $t_2$-$\bdd$-compressing
disk $P$ in $W_1$.
 When $P$ is a $t_2$-compressing disk,
$D_1$ and $P$ show that $H_2$ is weakly $K$-reducible in Case (i),
and in Case (iii) we isotope $H_1$
along the parallelism between $A_{12}$ and $A_2$
so that $H_1$ is contained in int\,$W_1$
and that $H_1$ is disjoint from $P$, 
to see that $H_2$ is weakly $K$-reducible
by Proposition \ref{thm:koba}.
 This is the conclusion (1) of this lemma.
 When $P$ is a $t_2$-$\bdd$-compressing disk,
it is also a $K$-$\bdd$-compressing disk of $A_{11}$
by the definition of $t_2$-$\bdd$-compressing disk.
 Thus one of the conditions (E), (F) and (G) holds.
 This is the conclusion (3) of this lemma.

\begin{figure}[htbp]
\centering
\includegraphics[width=.5\textwidth]{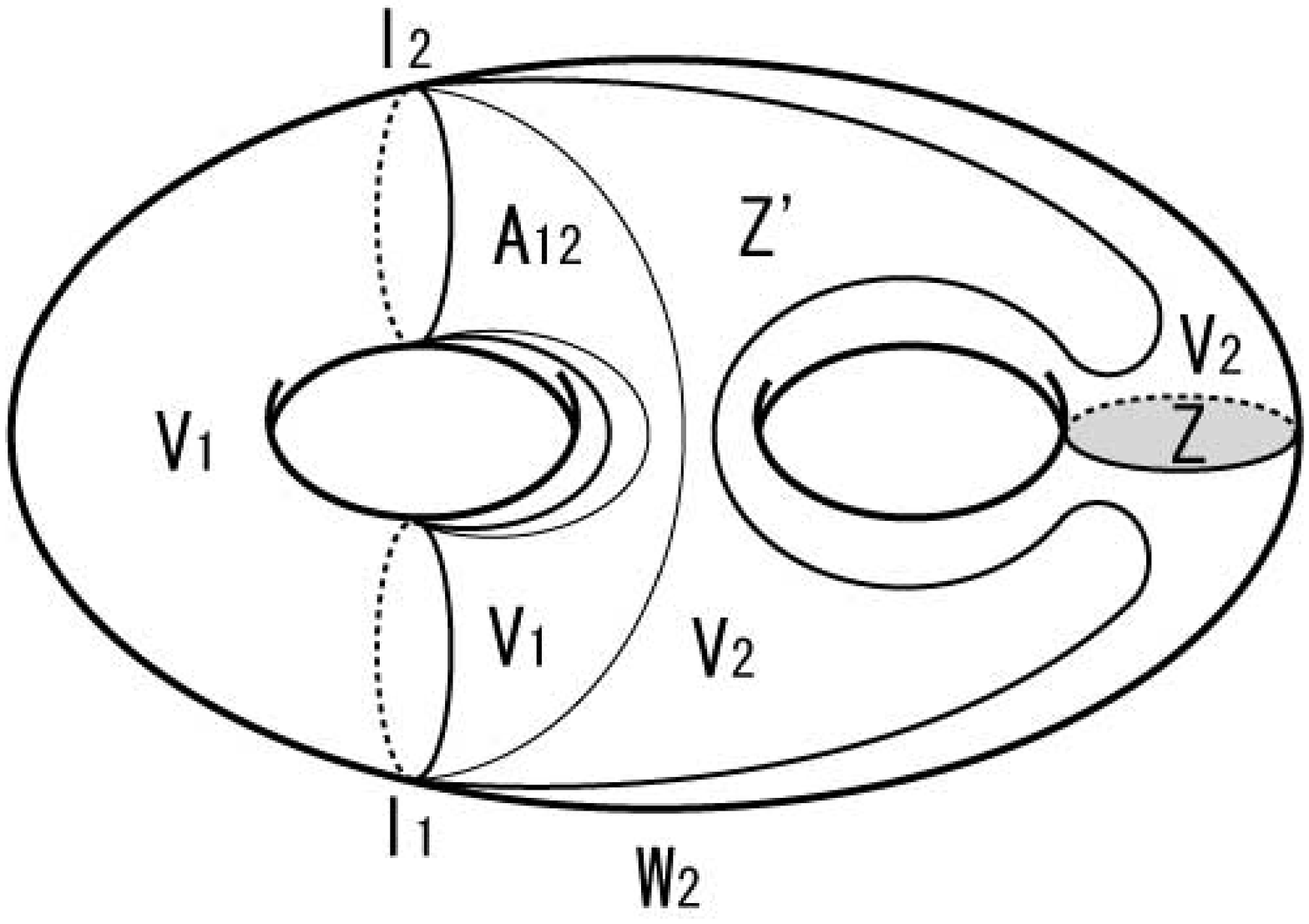}
\caption{}
\label{fig:13-11}
\end{figure}

 Hence we may assume
that $H'_2$ has a $t_2$-compressing or $t_2$-$\bdd$-compressing
disk $Z$ in $W_2$.
 When $Z$ is a $t_2$-$\bdd$-compressing disk of $H'_2$,
it is also a $\bdd$-compressing disk of $A_{12}$.
%by the unusual definition of $t_2$-$\bdd$-compressing disk.
% Hence the condition (iv) holds.
 Performing the $\bdd$-compressing operation on a copy of $A_{12}$ along $Z$,
we obtain a compressing disk of $H'_2$ in $W_2 \cap V_2$.
 Thus we may assume that $Z$ is a $t_2$-compressing disk of $H'_2$.
 If $\bdd Z$ is parallel to a component of $\bdd H'_2$,
then $Z\,(\subset V_2)$ and 
the $K$-compressing disk $R\,(\subset V_1)$ of $A_{11}$
show that $H_1$ is $K$-reducible.
 Hence $H_2$ is weakly $K$-reducible
as shown in the third paragraph 
in the proof of Lemma \ref{lem:2a-A_12compressible}.
 So we may assume
that $\bdd Z$ is not parallel to a component of $\bdd H'_2$.
 Compressing $H'_2$ along $R_2$,
we obtain an annulus $Z'$ in $W_2 \cap V_2$
such that $\bdd Z' = \bdd H'_2$.
 We isotope $\text{int}\,Z'$ slightly into $\text{int}\,(V_2 \cap W_2)$
so that $Z' \cap H'_2 = \bdd Z' = \bdd H'_2$.
 A typical example is described in Figure \ref{fig:13-11}.
 This annulus $Z'$ is 
$t_2$-compressible or $t_2$-$\bdd$-compressible in $(V_2, t_2)$
by Lemma 2.4 in \cite{GH2}.
 In the former case,
performing a $t_2$-compressing on $Z'$,
we obtain a disk
which is disjoint from $K$ and bounded by a loop of $\bdd H'_2$.
 This disk and $R$ 
%the $K$-compressing disk $R$ of $A_{11}$ 
show that $H_1$ is $K$-reducible.
 This implies the conclusion (1) again.
 In the latter case,
let $Q$ be a $t_2$-$\bdd$-compressing disk of $Z'$.
 If the arc $\bdd Q \cap H_1$ is contained in $A_{11}$,
then performing the $t_2$-$\bdd$-compressing operation on $Z'$ along $Q$,
we obtain a disk $Q'\,(\subset V_2)$
such that $\bdd Q'$ bounds a disk in $A_{11}$,
which intersects $K$ in two points.
 Note that $Q'$ is disjoint from $K$.
 The disks $R$ and $Q'$ show
that $H_1$ has a satellite diagram on $A_{11}$.
 (In fact, for $i=1$ and $2$,
we can take a canceling disk $C_i$ of $t_i$ in $(V_i, t_i)$
so that $C_1$ is disjoint from $R$
and $C_2$ is disjoint from $Q$.)
 This is the conclusion (2) of this lemma.
 Then we may assume
that the arc $\bdd Q \cap H_1$ is contained in $A_{12}$.
%%%?????????????????????????????????????????????????????????????
 This implies that the annulus $Z'$ is parallel to $A_{12}$ in $W_2$.
%%%?????????????????????????????????????????????????????????????
% $(V_2, t_2)$ 内で parallel は言えると思うが, 
% parallelism が $H'_2$ と交わってしまっていないか？
% 交わっていると $W_2$ 内で parallel とは言えないのではないか？
%
% 大丈夫。$A_{12}$ と $Z'$ の和がトーラスをなしていて、
% それが $W_2$ 内で囲む領域の中に $Q$ が入っており、
% その領域が parallelism となる。
%%%?????????????????????????????????????????????????????????????
 Since $Z'$ is obtained by a compression on $H'_2$,
it has a $\bdd$-compressing disk $G$ in $(V_2, t_2)$
such that it is contained in $W_2 \cap V_2$ and $\bdd G \cap H_2$
is an essential arc in $H'_2$.
 (There is an arc connecting the two loops $l_1$ and $l_2$ in $H'_2$
such that it is disjoint from $\bdd Z$.)
 Therefore $A_{12}$ also has a $\bdd$-compressing disk in $W_2 \cap V_2$.
 Thus the condition (iv) holds.

 The annulus $A$,
which was obtained from $A_{11}$ by $K$-compressing along $R$,
is $K$-compressible or $K$-$\bdd$-compressible in $(W_1, K)$
by Lemma 3.1 in \cite{GH2}.
%\ref{lem:SurfaceIn20}.
 In the former case,
performing the $K$-compressing operation on $A$,
we obtain a $K$-compressing disk bounded by $l_1$.
 Then this disk and $Z$ show
that $H_2$ is weakly $K$-reducible.
 This is the conclusion (1).
 In the latter case,
let $C$ be a $K$-$\bdd$-compressing disk of $A$.
 First suppose 
%that $C$ is contained in $V_1$.
that the arc $\bdd C \cap H_2$ is contained in $A_2$.
 We can take $C$ to be disjoint from the copy of $R$ in $A$.
 Then $C$ forms
a $K$-$\bdd$-compressing disk of the annulus $A_{11}$
such that the arc $A_{11} \cap \bdd C$ is essential in $A_{11}$.
 Thus the condition (D) holds,
and we obtain the conclusion (4) of this lemma.
 Hence we may assume
%that $C$ is contained in $V_2$,
%namely, 
that the arc $\bdd C \cap H_2$ is contained in $H'_2$.
 In Case (i),
$\bdd$-compressing $A$ along $C$,
we obtain an essential disk disjoint from $K$ in $W_1$.
 An adequate small isotopy moves this disk so that it is disjoint from $l_1$.
 Hence, this disk together with $D_1$
shows that $H_2$ is weakly $K$-reducible.
 We consider Case (iii).
 In this case, we will move $H_1$ ignoring $K$
so that we can use Lemma \ref{lem:complement}.
 Recall that $R$ is a $K$-compressing disk of $A_{11}$
such that $\bdd R$ bounds a disk $R'$ on $A_{11}$.
 In $W_1$
the $2$-sphere $R \cup R'$ bounds a $3$-ball,
and hence $A_{11}$ and $A$ are isotopic in $W_1$
fixing their boundary loops $\bdd A_{11} = \bdd A$
ignoring $K$.
 Hence $H_1$ is isotopic to the torus $H=A \cup A_{12}$ in $M$, 
ignoring $K$.
 Since we are in Case (iii),
we can isotope $H$
along the parallelism between the annuli $A_{12}$ and $A_2$
so that $A_{12}$ is isotoped onto $A_2$.
 Recall that the $K$-$\bdd$-compressing disk $C$ of $A$
intersects $H_2$ in an essential arc in $H'_2$.
 (See Figure \ref{fig:13-12} for a typical example.)
 We isotope $H$ along $C$
so that $H \cap H_2$ is a torus with one hole $H_0$
and that $H \cap \text{int}\,W_1$ is an open disk
the closure of which is an essential separating disk $C'$
bounded by the loop $\bdd H_0$.
 Then the solid torus $V_1$ is isotopic to the solid torus $U_1$
bounded by the torus $H = H_0 \cup C'$ in $W_1$,
ignoring $K$.
 The complementary $3$-manifold $\text{cl}\,(M-U_1)$
is isotopic to the solid torus $V_2$.
 Then Lemma \ref{lem:complement} shows
that $H_2$ is meridionally stabilized,
and hence $H_2$ is weakly $K$-reducible
by Proposition 4.7 in \cite{GH2}.
%\ref{prop:20weak}.
% This is the conclusion (2) of this lemma.
\end{proof}

\begin{figure}[htbp]
\centering
\includegraphics[width=.5\textwidth]{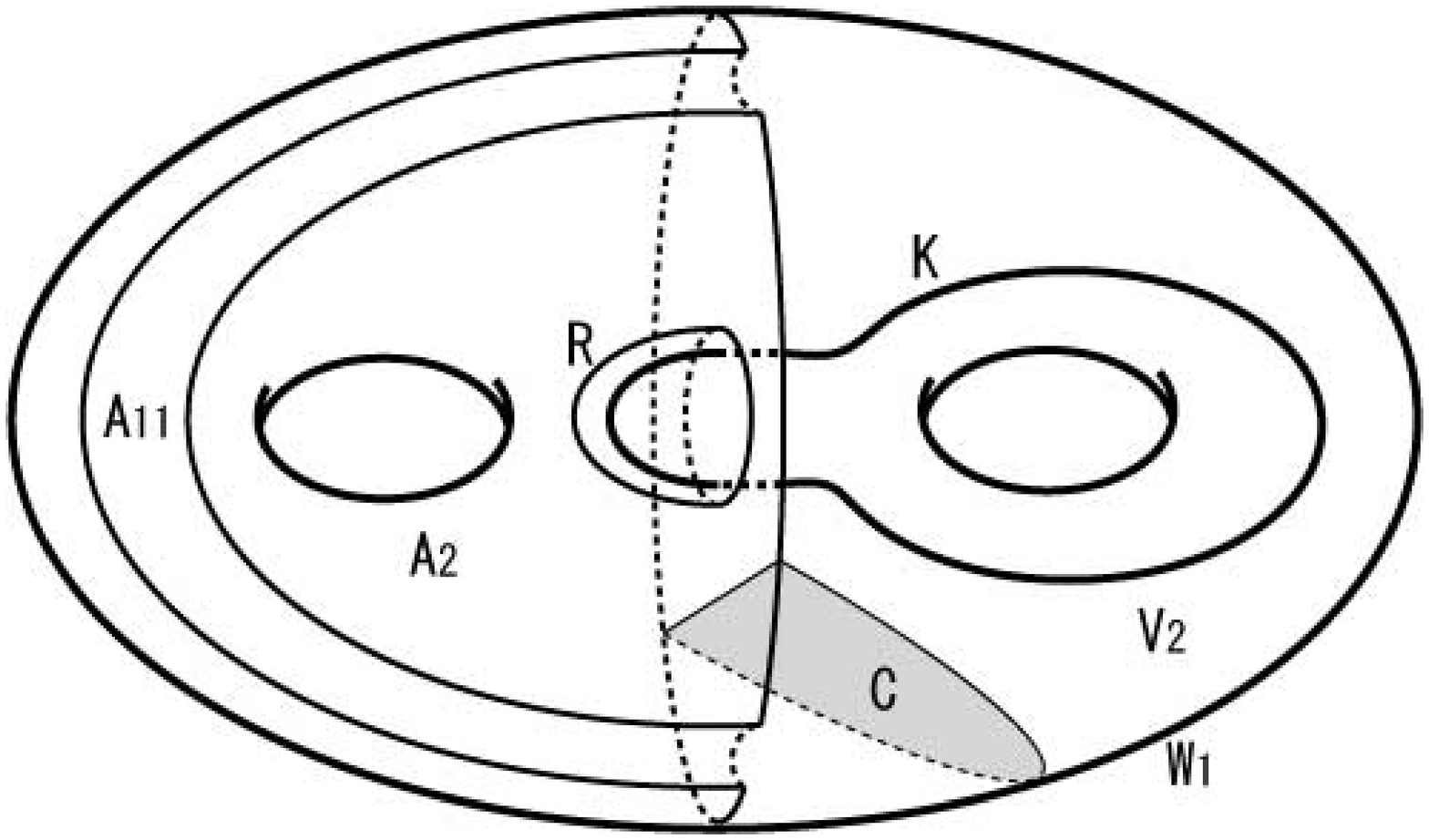}
\caption{}
\label{fig:13-12}
\end{figure}

\begin{remark}
 In Case (i)(B), 
we can delete the conclusion (2) of the above lemma.
 In fact, we can see the annuli $Z'$ and $A_{12}$ are parallel in $W_2$
as below.
 The torus $A_{12} \cup Z'$ bounds in $W_2$
%a (possibly trivial) classical knot exterior $E$,
a $3$-manifold that is homeomorphic to
an exterior $E$ of a (possibly trivial)
knot in $S^3$,
because a handlebody is irreducible.
 Since the loop $l_1$ bounds the disk $D_1$ in $W_2$,
it is of meridional slope
on the boundary torus of the knot exterior $E$.
 Because $l_1$ is a meridian of the solid torus $V_1$
and $M$ is not homeomorphic to $S^2 \times S^1$,
$l_1$ is not a meridian of the solid torus $V_2$.
 Since $E$ is cut off from $V_2$ by $Z'$,
it is not homeomorphic to the exterior of a non-trivial knot exterior,
but is the exterior of the trivial knot in $S^3$
with $l_1$ being a meridian of the knot.
 Hence $E$ is a solid torus with $l_1$ being a longitude,
and hence $A_{12}$ and $Z'$ are parallel in $W_2$.
\end{remark}

%%%%%%%%%%%%%%%%%%%%%%%%%%%%%%%%%%%%%%%%%%%%%%%%%%

\bibliographystyle{amsplain}

%\medskip

%Hiroshi Goda:
%Department of Mathematics,
%Tokyo University of Agriculture and Technology,
%Koganei, Tokyo, 184-8588, Japan. 
%e-mail: goda@cc.tuat.ac.jp

%Chuichiro Hayashi:
%Department of Mathematical and Physical Sciences,
%Faculty of Science, Japan Women's University,
%2-8-1 Mejiro-dai, Bunkyo-ku, Tokyo, 112-8681, Japan.
%)
%e-mail: hayashic@fc.jwu.ac.jp

\end{document}